\newcommand{\N}{\mathbb{N}}
\newcommand{\R}{\mathbb{R}}
\newcommand\ceil[1]{\left\lceil#1\right\rceil}
\newcommand{\CC}{\mathcal{C}}
\newcommand{\LL}{\mathcal{L}}
\renewcommand{\SS}{\mathcal{S}}
\newtheorem{theorem}{Theorem}
\newtheorem{lemma}[theorem]{Lemma}
\newtheorem{conjecture}[theorem]{Conjecture}
\newtheorem{corollary}[theorem]{Corollary}
\newtheorem{claim}{Claim}
\theoremstyle{definition}
\newenvironment{claimproof}[1][\proofname]{%
\begin{proof}[#1]%
}{\end{proof}}
\newcommand\st{\text{ s.t. }}
\let\ge\geqslant
\let\leq\leqslant
\let\geq\geqslant
\let\epsilon\varepsilon
\let\setminus-
\newcommand\dist{\mathrm{dist}}
\title{A Caro-Wei bound for induced linear forests in graphs} 
\author{Gwenaël Joret \and Robin Petit}
\address{
  Computer Science Department\\
  Université libre de Bruxelles\\
  Brussels\\
  Belgium
}
\email{gwenael.joret@ulb.be}
\email{robin.petit@ulb.be}
\thanks{ G.\ Joret is supported by a PDR grant from the Belgian National Fund for Scientific Research (FNRS)}
\date{\today}
\begin{document}

\begin{abstract}
A well-known result due to Caro (1979) and Wei (1981) states that every graph $G$ has an independent set of size at least $\sum_{v\in V(G)} \frac{1}{d(v) + 1}$, where $d(v)$ denotes the degree of vertex $v$. 
Alon, Kahn, and Seymour (1987) showed the following generalization: For every $k\geq 0$, every graph $G$ has a $k$-degenerate induced subgraph with at least $\sum_{v \in V(G)}\min\{1, \frac {k+1}{d(v)+1}\}$ vertices. 
In particular, for $k=1$, every graph $G$ with no isolated vertices has an induced forest with at least $\sum_{v\in V(G)} \frac{2}{d(v) + 1}$ vertices. 
Akbari, Amanihamedani, Mousavi, Nikpey, and Sheybani (2019) conjectured that, if $G$ has minimum degree at least $2$, then one can even find an induced {\em linear} forest of that order in $G$, that is, a forest where each component is a path. 

In this paper, we prove this conjecture and show a number of related results. In particular, if there is no restriction on the minimum degree of $G$, we show that there are infinitely many ``best possible'' functions $f$ such that $\sum_{v\in V(G)} f(d(v))$ is a lower bound on the maximum order of a linear forest in $G$, and we give a full characterization of all such functions $f$. 
\end{abstract}

\maketitle

\section{Introduction}
All graphs in this paper are finite, simple, and undirected. 
The degree of a vertex $v$ in a graph $G$ is denoted $d_G(v)$, and we drop the subscript $G$ when the graph is clear from the context.  

Caro~\cite{caro1979new} and Wei~\cite{wei1981lower} proved the following well-known lower bound on the independence number of a graph $G$ as a function of the degrees of its vertices. 

\begin{theorem}[Caro~\cite{caro1979new} and Wei~\cite{wei1981lower}]
    \label{thm:CaroWei}
    Every graph $G$ has an independent set of size at least $\sum_{v\in V(G)} \frac{1}{d(v) + 1}$. 
\end{theorem}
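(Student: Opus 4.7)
The plan is to apply the probabilistic method via a uniformly random linear ordering of $V(G)$. Concretely, I would sample a uniformly random bijection $\pi \colon V(G) \to \{1, \dots, |V(G)|\}$ and define $I_\pi$ to be the set of vertices $v$ that appear before all of their neighbors, i.e., $\pi(v) < \pi(u)$ for every $u \in N(v)$. The first step is to verify that $I_\pi$ is automatically an independent set: if two adjacent vertices $u, v$ both belonged to $I_\pi$, then we would need $\pi(u) < \pi(v)$ (because $u \in N(v)$) and $\pi(v) < \pi(u)$ (because $v \in N(u)$), which is a contradiction.

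The next step is to estimate $\mathbb{E}[|I_\pi|]$ via linearity of expectation. For each vertex $v$, the event $v \in I_\pi$ depends only on the relative order of $v$ and its $d(v)$ neighbors; since $\pi$ is uniform, each element of the closed neighborhood $\{v\} \cup N(v)$ is equally likely to come first, so $\Pr[v \in I_\pi] = \frac{1}{d(v)+1}$. Summing over all vertices yields $\mathbb{E}[|I_\pi|] = \sum_{v \in V(G)} \frac{1}{d(v)+1}$, so there must exist some ordering $\pi^*$ with $|I_{\pi^*}|$ at least this value, giving the desired independent set.

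There is no real obstacle in this argument: it is a short and textbook application of the probabilistic method, and (if preferred) it can be derandomized either by the method of conditional expectations or by a direct greedy procedure that iteratively removes a vertex of minimum degree together with its neighborhood, contributing $\frac{1}{d(v)+1}$ to the count at each step. The only point that requires explicit care is the independence of $I_\pi$, since the definition in terms of $\pi$ does not make it visually obvious. I would choose the random-ordering presentation because it extends smoothly to the $k$-degenerate generalization of Alon--Kahn--Seymour recalled in the introduction, which is the natural conceptual framework for the rest of the paper.
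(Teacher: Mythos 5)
Your argument is correct: the random-ordering proof is the standard probabilistic proof of the Caro--Wei bound, the independence of $I_\pi$ is verified properly, and the computation $\Pr[v \in I_\pi] = \frac{1}{d(v)+1}$ via the uniform relative order of the closed neighborhood is exactly right. Note, however, that the paper itself gives no proof of this statement --- it is quoted as a known result of Caro and Wei --- so there is nothing internal to compare against. It is worth observing that the inductive technique the paper does use for its own results (as in the proof of \Cref{thm:proof_of_conj}: delete a vertex of maximum degree and check that the weight function does not decrease, using the identities $f(k-1)-f(k)\geq f(d-1)-f(d)$ and $d\,(f(d-1)-f(d))=f(d)$) specializes, with $f(d)=\frac{1}{d+1}$, to an alternative short proof of Caro--Wei; your probabilistic route and this deletion-based induction are the two classical proofs, and your remark that the random-ordering version extends to the $k$-degenerate setting of Alon--Kahn--Seymour is accurate.
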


Note that this lower bound is best possible, as shown by complete graphs.    
Alon, Kahn and Seymour~\cite{alon1987large} generalized this result as follows, where a graph is {\em $k$-degenerate} if every subgraph has a vertex of degree at most $k$. 

\begin{theorem}[Alon, Kahn, and Seymour~\cite{alon1987large}]\label{thm:Alon Kahn Seymour k-degenerate}
For every integer $k\geq 0$, every graph $G$ has an induced subgraph with at least $\sum_{v \in V(G)}\min\left\{1, \frac {k+1}{d(v)+1}\right\}$ vertices that is $k$-degenerate.
\end{theorem}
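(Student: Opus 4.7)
The plan is to adapt the classical random-ordering proof of the Caro--Wei theorem. Let $\pi$ be a uniformly random linear order on $V(G)$, and define $S = S_\pi \subset V(G)$ to be the set of vertices $v$ such that $v$ is among the first $k+1$ elements of the closed neighborhood $\{v\} \cup N(v)$ with respect to $\pi$; equivalently, at most $k$ neighbors of $v$ precede $v$ in $\pi$. I will show (i) that $\mathbb{E}[|S|]$ equals the claimed sum, and (ii) that $G[S]$ is always $k$-degenerate, so fixing any ordering $\pi$ that attains the expectation yields the desired induced subgraph.

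For the expectation, restricting $\pi$ to the $(d(v)+1)$-element set $\{v\} \cup N(v)$ gives a uniformly random permutation of that set, so $v$ lands in any prescribed position among these $d(v)+1$ elements with probability $\frac{1}{d(v)+1}$. Hence $\Pr[v \in S] = \min\bigl\{1,\frac{k+1}{d(v)+1}\bigr\}$, where the $\min$ accounts for the degenerate case $d(v) \leq k$ in which $v$ is always included. Linearity of expectation then gives $\mathbb{E}[|S|] = \sum_{v \in V(G)} \min\bigl\{1,\frac{k+1}{d(v)+1}\bigr\}$, so some realization of $\pi$ achieves this value.

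For $k$-degeneracy, I order the vertices of $S$ according to $\pi$ and claim this is a degeneracy order. Indeed, if $v \in S$ then by definition at most $k$ neighbors of $v$ in $G$ (hence \emph{a fortiori} at most $k$ neighbors of $v$ in $S$) precede $v$ in $\pi$. So each $v \in S$ has back-degree at most $k$ in $G[S]$ under this ordering, witnessing $k$-degeneracy. Iteratively removing the $\pi$-last vertex of any subgraph of $G[S]$ exhibits a vertex of degree $\leq k$, as required.

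There is no substantive obstacle: the entire argument hinges on choosing the correct selection rule so that one and the same random ordering simultaneously delivers the target per-vertex probability and the degeneracy certificate. The $k=0$ case recovers the original Caro--Wei proof exactly (select $v$ iff it is first in its closed neighborhood). The only point requiring minor care is the handling of low-degree vertices, where the $\min$ saturates at $1$ because the closed neighborhood is too small to fill all $k+1$ of its ``leading'' slots.
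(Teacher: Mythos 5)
Your proof is correct: selecting the vertices with at most $k$ earlier neighbours in a uniformly random order gives $\Pr[v\in S]=\min\{1,\tfrac{k+1}{d(v)+1}\}$ by the uniformity of $v$'s position within its closed neighbourhood, and the $\pi$-last vertex of any subgraph of $G[S]$ has degree at most $k$ there, so $G[S]$ is $k$-degenerate; some ordering then yields $|S_\pi|\ge \mathbb{E}[|S|]$ (strictly speaking you want ``achieves at least the expectation'' rather than ``attains'' it, since $|S|$ is integer-valued). Note, however, that the paper does not prove this statement at all: it is quoted from Alon, Kahn and Seymour, whose original argument is constructive (a deletion/induction argument in the spirit of the paper's own proof of the $k$-restricted result in Section~3, where a vertex of maximum degree is removed and the potential $\sum_v f(d(v))$ is shown not to decrease), and the paper's footnote explicitly contrasts this with Punnim's probabilistic proof for $k=1$. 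Your argument is exactly that probabilistic route, generalized to all $k$: it is shorter and makes the degeneracy certificate immediate (the random order itself is the degeneracy order), whereas the constructive approach gives a deterministic greedy procedure and is the template the authors reuse for their own theorems; your proof could likewise be derandomized by conditional expectations if an algorithmic statement were wanted.
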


The bound in \Cref{thm:Alon Kahn Seymour k-degenerate} is best possible, as shown by complete graphs. 
\Cref{thm:Alon Kahn Seymour k-degenerate} with $k=0$ corresponds to the Caro-Wei bound for independent sets. 
In this paper, we focus on the $k=1$ case. 
For $k=1$, \Cref{thm:Alon Kahn Seymour k-degenerate} shows that every graph $G$ with no isolated vertex has an induced forest with at least $\sum_{v \in V(G)}\frac 2{d(v)+1}$ vertices.\footnote{We remark that Punnim~\cite{punnim2003forests} also proved the result for $k=1$, apparently unaware of~\cite{alon1987large}, with a probabilistic proof that differs from the constructive one given in~\cite{alon1987large}.} 
It is natural to ask whether one can guarantee some additional properties for the trees in this induced forest, such as a bound on the maximum degree, or a specific structure. 
This is the topic that we explore in this paper. 

Our starting point is a conjecture of Akbari, Amanihamedani, Mousavi, Nikpey, and Sheybani~\cite{akbari2019maximum}: They conjectured that one can impose an upper bound of $2$ on the maximum degree of the trees---or equivalently, that each tree is a path---{\em provided} that $G$ has minimum degree at least $2$. 


\begin{conjecture}[Akbari {\it et al.}~\cite{akbari2019maximum}]
    \label{conj:Akbari_et_al}
    Every graph $G$ with minimum degree at least $2$ has an induced linear forest with at least $\sum_{v\in V(G)} \frac{2}{d(v) + 1}$ vertices. 
\end{conjecture}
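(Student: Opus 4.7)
The plan is to prove the conjecture via a random-ordering argument in the spirit of Alon, Kahn, and Seymour. I would sample a uniformly random total order $\pi$ of $V(G)$ and define a random subset $S \subseteq V(G)$ such that $G[S]$ is automatically an induced linear forest; the goal is then to establish $\sum_{v} \Pr[v \in S] \ge \sum_{v} 2/(d(v)+1)$, from which the conjecture follows by linearity of expectation and the probabilistic method.

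The Alon–Kahn–Seymour rule ``keep $v$ iff $v$ has at most one earlier neighbor under $\pi$'' already gives exactly $\Pr[v\in S] = 2/(d(v)+1)$ and ensures that $G[S]$ is a forest, but $G[S]$ may well contain vertices of arbitrarily high degree. My first step would therefore be to refine the rule so as to cap the degree of each vertex of $G[S]$ at two. A natural candidate is a greedy implementation: process vertices in $\pi$-order and add $v$ to $S$ whenever doing so preserves the linear-forest property (i.e.\ creates no cycle and no degree-three vertex). An alternative, more explicit candidate is to keep $v$ iff (i) $v$ has at most one earlier neighbor in $\pi$, and (ii) at most one neighbor $u$ of $v$ with $\pi(u) > \pi(v)$ has $v$ as its unique earlier neighbor; one checks that (i) and (ii) together force every $v \in S$ to have at most one earlier and at most one later neighbor in $S$, so that $G[S]$ is an induced linear forest.

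I expect the main obstacle to be the probability estimate $\Pr[v \in S] \ge 2/(d(v)+1)$ under the refined rule. Since (i) alone already achieves exactly $2/(d(v)+1)$, any additional rejection strictly reduces this probability, so one needs a compensating mechanism: whenever the refined rule rejects $v$ for a degree-three reason, some ``substitute'' vertex (typically a neighbor of a suitable $u \in N(v)$) should be forced into $S$ even though the AK–S rule alone would not have selected it. Formalising this as a local exchange or matching argument, and showing it gains back exactly what is lost, is the technical heart of the proof. The minimum-degree-two hypothesis is essential here: vertices of degree $0$ or $1$ have AK–S probability already equal to $1$, so they provide no slack to absorb corrections, whereas $d(v) \ge 2$ yields a uniform gap of at least $1/3$ that is indispensable for the accounting to close.

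Should the direct probabilistic route prove too intricate, a fallback plan is induction on $|V(G)|$ via a minimum-counterexample argument. Taking a vertex $v$ of degree exactly two with neighbors $x,y$, one splits into cases according to whether $xy \in E(G)$ and whether $d(x) = 2$ or $d(y) = 2$. In each case a local modification, deleting $v$, deleting $\{v,x,y\}$, or contracting a suitable edge, produces a smaller graph which either still has minimum degree two (so induction applies directly) or can be repaired by adding a few judicious edges; the induced linear forest obtained on the smaller graph is then extended to $G$ by adding $v$ or a local substitute. The book-keeping of $\sum 2/(d(u)+1)$ under each surgery, in particular ensuring that the degree changes around $x$ and $y$ do not outrun the gain of $2/3$ at $v$, is the delicate point in this route.
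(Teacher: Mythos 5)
Both of your routes stop exactly where the real difficulty begins, so as it stands this is a plan rather than a proof. In the probabilistic route, you correctly observe that any rule refining Alon--Kahn--Seymour to cap degrees at two can only decrease $\Pr[v\in S]$ below $2/(d(v)+1)$, and you then posit an unspecified ``compensating mechanism'' or exchange argument to win the loss back; that mechanism is precisely the content of the conjecture, and you give no construction of it, no invariant it should preserve, and no reason it exists. (The bound is tight on complete graphs, so there is no global slack to absorb losses; a local exchange would have to balance exactly, and nothing in your sketch pins down how.) In the inductive fallback, the missing idea is that the statement as phrased is not closed under vertex deletion: removing vertices creates vertices of degree $0$ or $1$, and for those the weight $2/(d+1)$ is simply false as a lower bound (e.g.\ $K_{1,3}$ has weight $7/2$ but no induced linear forest on more than $3$ vertices). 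So ``delete a degree-two vertex and repair by adding edges'' cannot be run on the bare statement; you need to strengthen the induction hypothesis first, and your sketch does not identify the strengthening nor verify the bookkeeping you yourself flag as the delicate point.

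For comparison, the paper's proof is a short induction on $|V(G)|$ with exactly such a strengthening: it proves the bound $\sum_v f(d(v))$ for \emph{all} graphs, where $f(0)=1$, $f(1)=5/6$, and $f(d)=2/(d+1)$ for $d\ge 2$. The value $5/6$ is chosen so that the consecutive differences $f(d-1)-f(d)$ are non-increasing and satisfy $d\,(f(d-1)-f(d))=f(d)$ for $d\ge 3$; one then deletes a vertex of \emph{maximum} degree (not minimum degree) when $\Delta\ge 3$, and these two inequalities give $f(G-v)\ge f(G)$, so the forest found in $G-v$ already suffices. The base case $\Delta\le 2$ is handled directly (paths taken whole, cycles minus one vertex). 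If you want to salvage your inductive route, the concrete step you are missing is this reweighting of degree-$0$ and degree-$1$ vertices together with the maximum-degree deletion; without it, neither of your sketched arguments closes.
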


Here, a forest is {\em linear} if every component of the forest is a path. 
(We remark that a single vertex is considered to be a path in this paper, thus a linear forest is allowed to have isolated vertices.)
Note that it is necessary to rule out vertices of degree $1$ in the above conjecture, as shown by the claw $K_{1,3}$.  
As supporting evidence for their conjecture, the authors of~\cite{akbari2019maximum} proved it in the case where the graph $G$ is regular.

Our first contribution is a proof of \cref{conj:Akbari_et_al}, which follows from \cref{thm:proof_of_conj} below. 
In order to state this theorem and subsequent results in this paper, it will be convenient to introduce the following shorthand notation: 
Given a function $f : \mathbb N \to [0, 1]$ and a graph $G$, with a slight abuse of notation we let $f(G)$ denote the following quantity:
\[
f(G) \coloneqq \sum_{v \in V(G)}f(d(v)).
\]

\begin{restatable}{theorem}{thmproofofconj}\label{thm:proof_of_conj}
Let $f: \N \to \R$ be defined as follows: 
\[
f(d) = \begin{cases}
    1 &\text{ if } d = 0\\
    \frac 56 &\text{ if } d=1\\
    \frac 2{d+1} &\text{ if } d\geq 2. 
\end{cases}
\]
Then, every graph $G$ has an induced linear forest with at least $f(G)$ vertices.  
\end{restatable}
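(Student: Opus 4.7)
My plan is to prove the theorem by strong induction on $n = |V(G)|$; the base case $n = 0$ is trivial. For the inductive step, I would first dispose of two easy structural reductions. If $G$ has an isolated vertex $v$, I would apply induction to $G - v$ and extend the resulting induced linear forest by $v$; since $f(0) = 1$ and no other degree changes, the increment matches exactly. If $G$ has a $K_2$ component $\{u, v\}$, I would apply induction to $G - \{u,v\}$ and add $\{u,v\}$ as a $P_2$ to the forest, gaining $2$ vertices against an $f$-increment of $2f(1) = 5/3$.

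The main reduction handles a pendant vertex $v$ with neighbor $u$ of degree $d_G(u) \ge 2$. Setting $G' = G - v$, the degree of $u$ drops by $1$ and all other degrees are preserved, so
\[
\sum_{w \in V(G)} f(d_G(w)) - \sum_{w \in V(G')} f(d_{G'}(w)) = f(1) + \bigl[f(d_G(u)) - f(d_G(u)-1)\bigr] \le f(1) = \frac{5}{6},
\]
where the inequality uses that $f$ is nonincreasing on $\N$. By induction, $G'$ admits an induced linear forest $F'$ with $|F'| \ge \sum_{w \in V(G')} f(d_{G'}(w))$. When $u \notin F'$ or when $u \in F'$ with $d_{F'}(u) \le 1$, the set $F' \cup \{v\}$ is an induced linear forest in $G$ of size $|F'| + 1$, which by the display above beats $\sum_{w \in V(G)} f(d_G(w))$. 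The delicate subcase is $u \in F'$ with $d_{F'}(u) = 2$: adding $v$ would give $u$ degree $3$, and a naive swap (delete a forest-neighbor of $u$, insert $v$) yields only the same size as $F'$. To handle this I would pursue alternative reductions---for instance, removing $u$ and $v$ together, or $v$ along with a judiciously chosen neighbor of $u$---so that the $1/6$ slack at degree $1$, compounded with the drops at $u$'s other neighbors, provides enough room. Identifying which reduction closes the arithmetic in every sub-sub-case is the core combinatorial bookkeeping.

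The main obstacle, however, is the case $\delta(G) \ge 2$, which is precisely \Cref{conj:Akbari_et_al}. In this regime $f(d) = 2/(d+1)$, so the bound coincides with the Alon-Kahn-Seymour bound for induced forests, but the linearity constraint is not delivered by the standard random-ordering argument, which controls only the number of \emph{earlier} neighbors of each vertex. I would therefore pursue a dedicated argument: perhaps a greedy procedure exploiting the min-degree-$2$ hypothesis to identify a small reducible configuration (for instance, a vertex of degree $2$ with appropriately constrained second-neighborhood, or a short induced path with low-degree endpoints) whose removal permits induction without linearity loss; or alternatively a probabilistic argument on a restricted random subset that proactively blocks degree-$3$ configurations. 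Showing that the inclusion probability of each vertex remains at least $2/(d(v)+1)$ while enforcing max degree $\le 2$ in the induced subgraph is the crux of the argument.
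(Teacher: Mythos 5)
There is a genuine gap: your argument never actually proves the main case. Your reductions (isolated vertices, $K_2$ components, pendant vertices) only whittle the problem down to graphs with $\delta(G) \geq 2$, and for that case --- which is exactly \Cref{conj:Akbari_et_al}, i.e.\ the entire content of the theorem --- you offer only speculative directions (a greedy reducible-configuration search, or a constrained probabilistic argument) without carrying any of them out. Even the pendant-vertex reduction you do attempt is left open in its ``delicate subcase'' ($u \in F'$ with $d_{F'}(u)=2$), where you acknowledge the arithmetic is not closed. So as it stands the proposal is a reduction to the hard part, not a proof.

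The paper's proof goes in the opposite direction and is much shorter: it deletes a vertex $v$ of \emph{maximum} degree $\Delta$, not a vertex of low degree. The specific value $f(1)=\tfrac56$ is chosen so that the consecutive differences $f(d-1)-f(d)$ are non-increasing in $d$ and so that $d\,(f(d-1)-f(d)) = f(d)$ for all $d \geq 3$. Consequently, when $\Delta \geq 3$, deleting $v$ raises the $f$-value of each of its $\Delta$ neighbors by at least $f(\Delta-1)-f(\Delta)$, and the total gain $\Delta\,(f(\Delta-1)-f(\Delta)) = f(\Delta)$ exactly offsets the loss of $f(\Delta)$ from removing $v$; hence $f(G-v) \geq f(G)$ and the inductive forest in $G-v$ already suffices for $G$, with no need to insert $v$ or touch its neighborhood. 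This pushes all remaining work into the base case $\Delta \leq 2$, where every component is a path (take it all) or a cycle $C$ (take $|V(C)|-1 \geq \tfrac23|V(C)|$ vertices). In particular the minimum-degree-$2$ regime that blocks you is dissolved entirely by choosing the max-degree vertex, and no probabilistic or configuration-based argument is needed.
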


Our proof of \cref{thm:proof_of_conj} is a short inductive argument. 
We remark that \cref{thm:proof_of_conj} is best possible in the sense that the value of $f(d)$ cannot be increased for any $d$. 
(This is clear for $d=0$, for $d=1$ this is witnessed by $K_{1,3}$, and for $d\geq 2$ by complete graphs.) 

If we compare \cref{thm:Alon Kahn Seymour k-degenerate} for $k=1$ with \cref{thm:proof_of_conj}, we see that the lower bounds are the same except for the contribution of degree-$1$ vertices, which are respectively $1$ and $5/6$. 
As it turns out, one can keep a contribution of $1$ for degree-$1$ vertices and get an induced forest that is  almost linear, namely, every component is a {\em caterpillar}. 
Here, a caterpillar is any tree that can be obtained from a path $P$ with at least one vertex by adding a (possibly empty) set of new vertices and making each new vertex adjacent to a vertex of $P$. 
(Let us point out that a graph consisting of a single vertex is a caterpillar, that is, a caterpillar does not need to contain an edge.) 

\begin{corollary}\label{cor:caterpillar}
Every graph $G$ with no isolated vertices has an induced forest with at least $\sum_{v\in V(G)} \frac{2}{d(v) + 1}$ vertices, every component of which is a caterpillar.  
\end{corollary}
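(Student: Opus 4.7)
The plan is to reduce to \Cref{thm:proof_of_conj} by handling the degree-$1$ vertices of $G$ separately. I would set $L := \{v \in V(G) : d_G(v) = 1\}$ and $H := V(G) \setminus L$, and apply \Cref{thm:proof_of_conj} to the induced subgraph $G[H]$ to obtain a subset $F \subseteq H$ such that $G[F]$ is a linear forest and $|F| \geq \sum_{v \in H} f(d_{G[H]}(v))$, where $f$ is the function from that theorem. The candidate vertex subset for the corollary is then $S := F \cup L$.

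For the structural property, I would observe that each $v \in L$ has $d_G(v) = 1$, hence at most one neighbor in $S$. Consequently, each vertex of $L$ either attaches as a pendant leaf to some vertex of $F$, pairs with another vertex of $L$ as an isolated edge (when its unique $G$-neighbor also lies in $L$), or becomes an isolated vertex of $G[S]$ (when its unique $G$-neighbor lies in $H \setminus F$). Thus $G[S]$ is a forest, and each component is either a path from $G[F]$ with some pendant leaves attached, or a single edge, or a single vertex---each of which is a caterpillar.

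For the size bound, the main point is the pointwise inequality $f(d_{G[H]}(v)) \geq \frac{2}{d_G(v)+1}$ for every $v \in H$. Since $v \in H$ gives $d_G(v) \geq 2$ and $d_{G[H]}(v) \leq d_G(v)$, one checks three cases: if $d_{G[H]}(v) \geq 2$ the inequality follows from monotonicity of $d \mapsto 2/(d+1)$; if $d_{G[H]}(v) = 0$ then $f(0) = 1 \geq \frac{2}{d_G(v)+1}$; and in the critical case $d_{G[H]}(v) = 1$ the value $f(1) = 5/6 \geq \frac{2}{3} \geq \frac{2}{d_G(v)+1}$ suffices. Combined with $\sum_{v \in L} \frac{2}{d_G(v)+1} = |L|$, this yields
\[
|S| = |F| + |L| \geq \sum_{v \in H} \frac{2}{d_G(v)+1} + \sum_{v \in L} \frac{2}{d_G(v)+1} = \sum_{v \in V(G)} \frac{2}{d_G(v)+1}.
\]
The only conceptual obstacle is recognising that the $1/6$ slack in $f(1) = 5/6$ from \Cref{thm:proof_of_conj} is exactly calibrated to absorb the loss $\frac{2}{d_G(v)+1} \leq \frac{2}{3}$ at $H$-vertices that lose all but one $H$-neighbour; everything else is a routine verification.
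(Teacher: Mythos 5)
Your proposal is correct and follows essentially the same route as the paper: remove the degree-$1$ vertices $L$, apply \Cref{thm:proof_of_conj} to $G-L$, add $L$ back, and bound the count using that $f$ evaluated at the (possibly smaller) degree in $G-L$ is at least $\frac{2}{d_G(v)+1}$ for vertices of $G$-degree at least $2$. Your explicit case analysis at degree $0,1,\geq 2$ is just an unfolding of the paper's one-line monotonicity argument, and your verification that the added leaves keep every component a caterpillar matches what the paper asserts without detail.
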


\cref{cor:caterpillar} follows easily from \cref{thm:proof_of_conj}, the proof consists of first removing the degree-$1$ vertices and then applying \cref{thm:proof_of_conj}. 

It is interesting to rephrase the previous results in terms of treewidth and pathwidth: 
graphs of treewidth $0$ correspond to edgeless graphs, and same for pathwidth $0$. 
Graphs of treewidth $1$ are exactly forests, while graphs of pathwidth $1$ are exactly forests where each component is a caterpillar. 
Thus, the Caro-Wei bound gives the extremal bound as a function of the degrees for finding induced subgraphs with treewidth/pathwidth $0$. 
\cref{thm:Alon Kahn Seymour k-degenerate} with $k=1$ does the same for induced subgraphs with treewidth $1$, and \cref{cor:caterpillar} shows that the extremal function is the same for pathwidth $1$ as for treewidth $1$. 

Now, let us come back to \cref{thm:proof_of_conj}. 
As already mentioned, the function $f$ in  \cref{thm:proof_of_conj} cannot be improved, thus the result is best possible in this sense. 
However, and perhaps surprisingly, there are other ``best possible'' functions $f$ such that $f(G)$ is a lower bound on the maximum order of an induced linear forest in $G$.  
A simple one is the function $f$ with $f(0)=f(1)=1$, $f(2)=2/3$, and $f(d)=0$ for all $d\geq 3$. 
(We skip the easy proof that it is a lower bound; it is best possible as shown by $K_3$ and $K_{1,t}$ for $t\geq 3$.) 
This naturally raises the following question: 
\begin{align*}
&\textrm{\it Given a graph $G$, what is the best possible lower bound on the maximum order} \\[-0.5ex]
&\textrm{\it of an induced linear forest in $G$ as a function of its degree sequence only?} 
\end{align*}
In order to answer this question, it will be helpful to introduce the following notation and definitions. 
First, given a class of graphs $\CC$ and a graph $G$, let $\alpha_{\CC}(G)$ denote the maximum order of an induced subgraph $H$ of $G$ such that $H$ belongs to $\CC$. 
(Above, we focused on the case where $\CC$ is the set of linear forests but we will soon consider other graph classes below, hence the notation.) 
Given a graph class $\CC$ and a function $f : \mathbb N \to [0, 1]$, the function $f$ is called a \emph{lower bound} for the graph invariant $\alpha_{\CC}$ if $\alpha_{\CC}(G) \geq f(G)$ holds for all graphs $G$. 
Such a lower bound $f$ is said to be \emph{extremal} if $f$ cannot be augmented, that is, if there does not exist $g : \mathbb N \to [0, 1]$ with $g \not \equiv 0$ such that $f+g$ is also a lower bound for $\alpha_{\CC}$. 
A lower bound $f$ is {\em dominated} by a lower bound $f'$ if $f(d) \leq f'(d)$ holds for every $d \in \N$. 
We remark that, at first sight, it might not be clear that every lower bound $f$ for $\alpha_{\CC}$ is dominated by some extremal lower bound $f'$---indeed, this requires a proof---however, this will be the case for all graph classes $\CC$ considered in this paper. 

Let $\LL$ denote the set of all linear forests; 
thus, $\alpha_{\LL}(G)$ denotes the maximum order of an induced linear forest in $G$. 
We may now state the following result, which fully answers the question raised above.


\begin{theorem}
\label{thm:characterization of ELBs for linear forests in intro}
Let $\varepsilon \in \R$ with $0 \leq \varepsilon \leq \frac 16$ and define 
\[f_\varepsilon : \mathbb N \to [0, 1] : d \mapsto \begin{cases}1 &\text{ if }d=0\\1-\varepsilon &\text{ if } d=1\\\frac 23 &\text{ if } d=2\\
\min\left\{3\varepsilon, \frac 2{d+1}\right\} &\text{ if } d\geq 3.\end{cases}\]
Then $\alpha_{\LL}(G) \geq f_\varepsilon(G)$ holds for every graph $G$. 
Moreover, this lower bound $f_\varepsilon$ is extremal, and every lower bound for $\alpha_{\LL}$ is dominated by some lower bound of this form. 
\end{theorem}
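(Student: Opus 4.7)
The theorem has three claims: (a) each $f_\varepsilon$ is a valid lower bound, (b) each $f_\varepsilon$ is extremal, and (c) every valid lower bound is dominated by some $f_\varepsilon$. I would tackle (c) first and (a) last, since the graphs enforcing the constraints in (c) double as tight witnesses for (b), while (a) is the most technical part.

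For (c), suppose $g$ is a valid lower bound. Standard small graphs give $g(0)\le 1$, $g(1)\le 1$, $g(2)\le 2/3$, and $g(d)\le 2/(d+1)$ for $d\ge 2$ (from $K_1$, $K_2$, $K_3$, and $K_{d+1}$). The key inequality is $g(d)+3g(1)\le 3$ for every $d\ge 3$, which I would derive from a single graph $G_d$ constructed as follows: take $d-2$ disjoint copies of $K_{1,3}$ and additionally join their centers into a clique $K_{d-2}$. Every center of $G_d$ has degree $d$, every leaf has degree $1$, and the maximum induced linear forest has exactly $3(d-2)$ vertices (taking all leaves is optimal; including a center would force excluding one of its leaves and all adjacent centers, for no net gain). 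Setting $\varepsilon:=\min\{1/6,\,1-g(1)\}$ and checking the cases $d\in\{0,1,2\}$ and $d\ge 3$ then yields $g\le f_\varepsilon$ pointwise.

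For (b), the same family of graphs provides tight witnesses: $K_1$ for $d=0$, $K_{1,3}$ for $d\in\{1,3\}$, $K_3$ for $d=2$; for $d\ge 3$ with $3\varepsilon\le 2/(d+1)$ the sunflower-like graph $G_d$ is tight, since
\[
(d-2)f_\varepsilon(d)+3(d-2)f_\varepsilon(1) \;=\; (d-2)\cdot 3\varepsilon + 3(d-2)(1-\varepsilon) \;=\; 3(d-2);
\]
and for $d\ge 3$ with $3\varepsilon\ge 2/(d+1)$, $K_{d+1}$ is tight. Hence no value $f_\varepsilon(d_0)$ can be raised without violating the lower-bound inequality on one of these graphs, which gives extremality.

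The main obstacle is (a). My plan is an induction on $|V(G)|$ along the lines of the proof of \cref{thm:proof_of_conj}. When $\delta(G)\ge 3$ the inequality is immediate from \cref{thm:proof_of_conj}, because $f_\varepsilon(d)=\min\{3\varepsilon,\,2/(d+1)\}\le 2/(d+1)=f_{1/6}(d)$ for every $d\ge 3$, so $\sum_v f_\varepsilon(d(v))\le \sum_v f_{1/6}(d(v))$. When $\delta(G)\le 2$, I would isolate a minimum-degree vertex $v$ together with a carefully chosen small neighborhood, add suitable vertices to the induced linear forest, and recurse on the remainder, verifying in each subcase that the gain offsets the change in $\sum_v f_\varepsilon(d(v))$ induced by the degree updates. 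The most delicate subcase should be a leaf $v$ whose neighbor $u$ has degree at least $3$: the naive reduction ``delete $v$'' leaves an inductive deficit roughly equal to $f_\varepsilon(1)-[f_\varepsilon(d(u)-1)-f_\varepsilon(d(u))]$, which is positive in the relevant degree range, so a smarter reduction — possibly deleting $v$ and $u$ simultaneously and tracking the degree drops at $N(u)\setminus\{v\}$ — will be needed to absorb this deficit.
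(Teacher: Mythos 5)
Your parts (b) and (c) are correct and, in substance, are the paper's own argument: your graphs $G_d$ (a clique $K_{d-2}$ with three pendant leaves attached to each clique vertex) are exactly the paper's graphs $H_{n,k}$ with $k=2$ and $n=d-2$, they give $\alpha_{\LL}(G_d)=3(d-2)$ and hence the constraint $g(d)+3g(1)\leq 3$, i.e.\ $g(d)\leq 3\varepsilon$ with $\varepsilon=1-g(1)$; combined with $K_1,K_2,K_3,K_{d+1}$ this yields domination, and the same graphs serve as tight witnesses for extremality. (The paper instead deduces extremality from domination plus pairwise incomparability of the functions $f_\varepsilon$, but your direct tight-witness route is equally valid.) These, however, are the easy parts of the theorem.

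The genuine gap is part (a), the statement that each $f_\varepsilon$ really is a lower bound, and your proposal leaves exactly the hard step unresolved. Your fallback reduction for a leaf $v$ with neighbor $u$ of degree at least $3$ (delete $u$ and $v$, put $v$ in the forest, recurse) requires $1+f_\varepsilon(G-u-v)\geq f_\varepsilon(G)$, i.e.\ the degree drops at $N(u)\setminus\{v\}$ must recover $f_\varepsilon(1)+f_\varepsilon(d(u))-1=f_\varepsilon(d(u))-\varepsilon$, which equals $2\varepsilon$ whenever $3\varepsilon\leq \frac 2{d(u)+1}$. But a drop from degree $d$ to $d-1$ yields zero gain whenever $3\varepsilon\leq\frac 2{d+1}$, since then $f_\varepsilon(d-1)=f_\varepsilon(d)=3\varepsilon$; for instance, if $d(u)=3$, the two other neighbors of $u$ have degree $4$, and $0<\varepsilon\leq\frac 2{15}$, the reduction strictly loses $2\varepsilon$. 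No bounded-neighborhood surgery obviously repairs this: in the extremal graphs $G_d$ the optimal forest keeps every leaf and discards every high-degree neighbor, a decision that depends on global structure rather than on the local configuration around one leaf. This is precisely why the paper does not prove the theorem by such an induction; it states explicitly that the short inductive argument of \cref{thm:proof_of_conj} works only for $\varepsilon=\frac 16$, and instead proves the local strengthening \Cref{thm:k-caterpillar without epsilon}, in which a leaf's weight $h_{k,G}$ depends on the degree of its unique neighbor, and derives that from the ABC Lemma (\Cref{lemma:A-B-C}), whose proof is a lengthy minimal-counterexample analysis over a vertex tripartition. So what you call a ``smarter reduction'' is the missing idea, not a detail to be checked, and as it stands your part (a) is a plan rather than a proof.
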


The optimal choice for $\varepsilon$ given the degree sequence of a graph $G$ is given in~\Cref{thm:epsilon star for k-caterpillars} in \Cref{sec:caterillars_bounded_degree}.

\Cref{thm:characterization of ELBs for linear forests in intro} is the main result of this paper. 
Note that \Cref{thm:proof_of_conj} follows from \Cref{thm:characterization of ELBs for linear forests in intro} by taking $\varepsilon=1/6$. 
However, we were unable to find a short proof for \Cref{thm:characterization of ELBs for linear forests in intro}, the inductive argument that we use in the proof of \Cref{thm:proof_of_conj} only works for $\varepsilon=1/6$. 
The heart of the proof of \Cref{thm:characterization of ELBs for linear forests in intro} is an auxiliary lemma, \Cref{lemma:A-B-C}, which we call the {\em ABC Lemma}. 
Informally, given a tripartition of the vertex set of $G$ into sets $A, B, C$, the lemma provides a large induced linear forest in $G$ satisfying the constraint that vertices in $B$ are leaves and vertices in $C$ are isolated. 
How large is the forest depends on the tripartition, with vertices in $B$ contributing less than those in $A$, and vertices in $C$ less than those in $B$. 

Using the ABC lemma, we were able to generalize \Cref{thm:characterization of ELBs for linear forests in intro} to the setting of induced subgraphs that are forests of caterpillars and have maximum degree at most $k$ for some $k\geq 2$. 
For $k=2$, this corresponds to induced linear forests. 
For $k=+\infty$, this corresponds to induced forests of caterpillars, as in \Cref{cor:caterpillar}. 
Thus, varying $k$ gives a way of interpolating between these two extremes. 
Again, for fixed $k$, we describe all the corresponding extremal functions, see~\Cref{thm:characterization of ELBs for max-degree-k caterpillars}. 
As it turns out, the proof for general $k$ is not more difficult than for $k=2$, which is why we include this result.  

We conclude this introduction by mentioning one last contribution. 
In order to motivate it, let us recall that treewidth is a lower bound on pathwidth, which in turn is a lower bound on treedepth minus $1$. 
These three graph invariants are closely related to each other and play a central role in structural graph theory. 
Graphs of treewidth $1$ are forests, 
graphs of pathwidth $1$ are forests of caterpillars, 
and graphs of treedepth $2$ are forests of {\em stars}, 
where a star is defined here as any tree that can be obtained from a single vertex $v$ by adding a (possibly empty) set of new vertices and making them all adjacent to $v$. 
(Note that this is a special case of the definition of caterpillars above, where the initial path $P$ consists of a single vertex; note also that a graph consisting of a single vertex is considered to be a star.) 
Since the extremal lower bounds are unique and the same in the first two cases, one may wonder if this remains true for the last case as well, that is, for finding induced forests of stars in a given graph.  
However, this is not the case. 
It turns out that there are infinitely many extremal lower bounds, 
as for induced linear forests. 
These are described in the following theorem, where 
$\SS$ denotes the set of all forests of stars.

\begin{restatable}{theorem}{thmforestsofstars}\label{thm:characterization of ELBs for forests of stars}
Let $\varepsilon \in \R$ with $0 \leq \varepsilon \leq \frac 16$ and define 
\[f_\varepsilon : \mathbb N \to [0, 1] : d \mapsto \begin{cases}1 &\text{ if }d=0\\1-\varepsilon &\text{ if } d=1\\
\min\left\{\frac 35,\frac 12+\varepsilon\right\} &\text{ if }d=2\\
\min\left\{\frac 2{d+1}, \frac 1d+\varepsilon\right\} &\text{ if } d \geq 3.\end{cases}\]
Then $\alpha_{\SS}(G) \geq f_\varepsilon(G)$ holds for every graph $G$. 
Moreover, this lower bound $f_\varepsilon$ is extremal, and every lower bound for $\alpha_{\SS}$ is dominated by some lower bound of this form. 
\end{restatable}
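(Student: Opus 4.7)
The plan is to follow the template established for induced linear forests (\Cref{thm:characterization of ELBs for linear forests in intro}), based on an auxiliary tripartition lemma in the spirit of the ABC Lemma (\Cref{lemma:A-B-C}). First I would prove a star-ABC lemma: given a tripartition $(A, B, C)$ of $V(G)$, find an induced subgraph $H$ that is a forest of stars, in which vertices of $B$ are only allowed to appear as leaves and vertices of $C$ only as isolated vertices, with $|V(H)| \geq \alpha|A| + \beta|B| + \gamma|C|$ for suitable constants $\alpha > \beta > \gamma \geq 0$. The main structural difference with the linear-forest setting is that star-centers have unbounded degree, so when an $A$-vertex has many neighbors in $A \cup B$, the induction may include it as a center of a large star, which alters the case analysis.

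Using this lemma, I would derive the lower bound in \Cref{thm:characterization of ELBs for forests of stars} by choosing, for every graph $G$ and every $\varepsilon \in [0, 1/6]$, a tripartition $(A, B, C)$ tuned to the degree sequence of $G$ and to $\varepsilon$. The two terms inside the minimum defining $f_\varepsilon(d)$ correspond to two regimes: the $\frac{2}{d+1}$ term reflects the strategy (already used in \Cref{thm:Alon Kahn Seymour k-degenerate}) of pairing high-degree vertices into $K_{1,1}$ components, while the $\frac{1}{d}+\varepsilon$ term reflects the strategy of placing high-degree vertices in $C$ with only an occasional star-center surviving, yielding the constant $+\varepsilon$ shift. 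Calibrating the tripartition per degree so that the per-vertex contribution equals $f_\varepsilon(d)$ should give the stated bound.

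For extremality, I would produce, for each $\varepsilon$, families of graphs tight at every degree: isolated vertices for $f_\varepsilon(0) = 1$; the cycle $C_5$ for the $3/5$ branch of $f_\varepsilon(2)$; the triangle $K_3$ (forcing $\varepsilon \leq 1/6$ via $\tfrac{1}{2}+\varepsilon \leq \tfrac{2}{3}$) and, for each $\varepsilon$, a small calibrated graph making $\tfrac{1}{2}+\varepsilon$ tight; cliques $K_{d+1}$ for the $\tfrac{2}{d+1}$ branch of $f_\varepsilon(d)$ with $d \geq 3$; modified cliques (for instance $K_{d+1}$ with an attached pendant edge) for the $\tfrac{1}{d}+\varepsilon$ branch; and pendant-augmentations of a carefully chosen core graph for $f_\varepsilon(1) = 1-\varepsilon$. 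For the dominance statement, given an arbitrary valid lower bound $f$, I would set $\varepsilon := \max\bigl\{0,\, \sup_{d \geq 2}(f(d) - 1/d)\bigr\}$, deduce $\varepsilon \leq 1/6$ from the triangle test and $f(1) \leq 1 - \varepsilon$ from pendant-augmentations (shrinking $\varepsilon$ if needed, or showing the resulting system is consistent within $[0,1/6]$), and use the extremal graphs above to conclude $f(d) \leq f_\varepsilon(d)$ for every $d$.

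The main obstacle will be the star-ABC lemma itself. Since star-centers can absorb arbitrarily many neighbors, the induction step must carefully track, at each center chosen, how many $B$-neighbors are converted to leaves, how many $A$- and $C$-neighbors are excluded, and how the budget $\alpha|A| + \beta|B| + \gamma|C|$ evolves under these reclassifications. Identifying the right constants $\alpha, \beta, \gamma$ and the right tie-breaking rules in the case split, so that no case exceeds the budget, is the crux of the argument. Once the star-ABC lemma is settled, both the lower bound in \Cref{thm:characterization of ELBs for forests of stars} and the matching tight constructions for extremality should follow by direct degree-tuned reasoning.
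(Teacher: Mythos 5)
The crux of your plan---the auxiliary ``star-ABC lemma''---is exactly the part that is missing, and as you have specified it, it cannot work. First, a guarantee of the form $|V(H)| \geq \alpha|A| + \beta|B| + \gamma|C|$ with \emph{constants} $\alpha > \beta > \gamma \geq 0$ cannot produce the degree-dependent bound $\sum_v f_\varepsilon(d(v))$: the per-vertex target takes infinitely many values (it behaves like $\frac 2{d+1}$ for large $d$ and like $\frac 1d + \varepsilon$ in a middle range), while a fixed tripartition with constant class weights yields only three possible per-vertex contributions; moreover any constant $\gamma > 0$ (or $\alpha,\beta$ bounded away from $0$) is already falsified by large cliques, whichever class you put them in. The auxiliary lemma the paper actually uses (\Cref{lemma:A-B for forests of stars}) is a \emph{bipartition} lemma with degree-dependent weights, $f_A(d)\approx\frac 2{d+1}$ and $f_B(d)=\frac 1{d+1}$, and this degree dependence is essential. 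Second, your structural constraints (``$B$-vertices only as leaves, $C$-vertices only isolated'') are imported from the linear-forest ABC Lemma but are the wrong ones for stars: in the reduction, the $B$-vertices are the vertices adjacent to leaves of $G$, and after you add those pendant leaves back, a $B$-vertex that was a leaf of a star with a center of degree $\geq 2$ becomes a second center, destroying the star structure. This is why \Cref{lemma:A-B for forests of stars} imposes the stronger condition that every forest-edge $vw$ with $w \in B$ has $v \in A$ \emph{and} $d_F(v)=1$, i.e.\ $B$-vertices may only sit in $K_2$ components attached to a degree-$1$ $A$-vertex. Relatedly, your explanation of the $+\varepsilon$ term (``high-degree vertices in $C$ with an occasional surviving center'') is not where it comes from: it comes from the $\varepsilon$-deficit of each pendant leaf (budgeted $1-\varepsilon$, contributing $1$), which subsidizes its support vertex, whose own guarantee is only $\frac 1{d+1}$ in the reduced graph; the tight example is $K_n$ with a pendant leaf at \emph{every} vertex, not a clique with a single attached pendant edge.

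The remaining parts of your plan are closer to the paper but still incomplete in the same direction. The dominance argument via $\varepsilon \coloneqq \max\{0, \sup_{d\geq 2}(f(d)-\tfrac 1d)\}$ can be made to work, but the consistency you defer (``shrinking $\varepsilon$ if needed'') is precisely the inequality $f(1) + (f(d)-\tfrac 1d) \leq 1$, which requires the family of cliques with one pendant per vertex; the paper instead sets $\varepsilon \coloneqq 1-\varphi(1)$ and derives $\varphi(d) \leq \tfrac 1d + \varepsilon$ from that family, together with $C_5$ for $d=2$ and $K_{d+1}$ for the $\frac 2{d+1}$ branch. Finally, be aware that even once the correct AB-type lemma is formulated, its proof is the bulk of the work: the paper proves it by a minimal-counterexample analysis (a sequence of claims eliminating low-degree configurations) that ends by reducing to a cubic graph and invoking a Lov\'asz-type partition of cubic graphs into two parts of maximum degree at most $1$. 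None of that is present in, or implied by, your sketch, so as it stands the proposal has a genuine gap at its central step.
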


The proof of \Cref{thm:characterization of ELBs for forests of stars}  follows a similar high level strategy to that of \Cref{thm:characterization of ELBs for linear forests in intro}. 
In particular, it introduces and uses a so-called {\em AB Lemma} (\Cref{lemma:A-B for forests of stars}) specifically designed for forests of stars. 


The paper is organized as follows. 
In \Cref{sec:linear_forests}, we give the short proof of  \Cref{thm:proof_of_conj}. Then in \Cref{sec:caterillars_bounded_degree} we turn to the proof of \Cref{thm:characterization of ELBs for linear forests in intro}, the main result of this paper. 
The latter proof relies on the ABC Lemma, which is proved in its own section, \Cref{sec:proof_of_ABC_Lemma}. 
Finally, \Cref{sec:star_forests} is devoted to the proof of \Cref{thm:characterization of ELBs for forests of stars}.

\section{Linear forests}
\label{sec:linear_forests} 

Let us start with the proof of \Cref{thm:proof_of_conj}, which we restate here for convenience. 
In the proof, and in the rest of the paper, we use $\Delta(G)$ to denote the maximum degree of a graph $G$. 
Also, given a vertex $v$ in a graph $G$, 
we denote by $N_G(v)$ the set of neighbors of $v$, and we drop the subscript $G$ when the graph is clear from the context.

\thmproofofconj*

\begin{proof}
We proceed by induction on $|V(G)|$. 
Let $\Delta \coloneqq \Delta(G)$. 
First, suppose that $\Delta \leq 2$. 
Then every component of $G$ is a path or a cycle. 
Each path component $P$ contributes at most $|V(P)|$ to $f(G)$ and can be taken entirely. 
Each cycle component $C$ contributes $2|V(C)|/3$ to $f(G)$, and we can take all vertices but one from $C$. 
Note that $|V(C)| - 1 \geq 2|V(C)|/3$ since $|V(C)| \geq 3$. 
Hence, this results in an induced linear forest of $G$ with at least $f(G)$ vertices, as desired. 

Next, assume that $\Delta \geq 3$. 
Observe that:
\begin{equation}
\label{eq:non_increasing_diffs}
f(k-1)-f(k) \geq f(d-1)-f(d)  \qquad \qquad 
\quad \textrm{ for every $k,d$ with } 1 \leq k \leq d
\end{equation}
and 
\begin{equation}
\label{eq:d-1 vs d}
d \cdot (f(d-1)-f(d)) = f(d) \,\phantom{-}\qquad \qquad 
\quad \textrm{ for every $d$ with } d \geq 3.
\end{equation}
(Indeed, the value of $f(1)$ was chosen so that $f(2)-f(3) = f(1)-f(2) = f(0)-f(1) = 1/6$ and \eqref{eq:non_increasing_diffs} holds.) 

Let $v$ be a vertex of maximum degree in $G$, let $G' \coloneqq G-v$ and, applying the inductive hypothesis on $G'$, let $F$ be an induced linear forest in $G'$ with at least $f(G')$ vertices. 
Using \eqref{eq:non_increasing_diffs} and \eqref{eq:d-1 vs d}, we obtain
\[
f(G') - f(G) = \sum_{w \in N_G(v)}\left(f(d_G(w)-1) - f(d_G(w))\right) - f(\Delta) \geq \Delta\left(f(\Delta-1)-f(\Delta)\right) - f(\Delta) = 0
\]
implying that $f(G') \geq f(G)$. 
Therefore, $F$ has the desired size for $G$. 
\end{proof}

\cref{cor:caterpillar} follows from~\cref{thm:proof_of_conj}, as we now explain.

\begin{proof}[Proof of~\cref{cor:caterpillar}] 
Let $L$ be the set of vertices of $G$ with degree $1$ and let $G'\coloneqq G-L$. 
By~\Cref{thm:proof_of_conj}, there exists an induced linear forest $F'$ of $G'$ with at least 
$f(G')$ 
vertices, where $f$ is the function from~\Cref{thm:proof_of_conj}. 
Observe that $d_G(v) \geq d_{G'}(v)$ for every $v \in V(G')$. In particular this implies $f(d_G(v)) \leq f(d_{G'}(v))$ for every $v \in V(G')$ since $f$ is nonincreasing. 
Let $F \coloneqq G[V(F') \cup L]$. Then $F$ is a forest of caterpillars that is induced in $G$. Furthermore, 
\begin{align*}    
\abs {V(F)} &= \abs {L} + \abs {V(F')} \\
&\geq \abs {L} + \sum_{v \in V(G')}f(d_{G'}(v)) \\
&\geq \sum_{v \in L}1 + \sum_{v \in V(G) \setminus L}f(d_G(v)) \\
&=\sum_{v \in V(G)}\frac 2{d_{G}(v)+1}, 
\end{align*}
where the last equality holds because every vertex in $V(G')$ has degree at least $2$ in $G$. 
Therefore, $F$ has the desired size. 
\end{proof}

As mentioned in the introduction, the lower bound $f$ provided in \autoref{thm:proof_of_conj} for induced linear forests is extremal but is not the only extremal lower bound. 
For instance, the following function $f'$ is also a lower bound that is extremal, yet $f$ and $f'$ are not comparable.
\[f' : \mathbb N \to [0, 1] : d \mapsto \begin{cases}1 &\text{ if } d=0\\1 &\text{ if }d=1\\\frac 23 &\text{ if } d=2\\0 &\text{ if } d\geq 3.\end{cases}\]
As it turns out, every lower bound is dominated by some extremal lower bound, and 
there are infinitely many extremal lower bounds. 
A full characterization is given in~\Cref{thm:characterization of ELBs for linear forests in intro}; each extremal lower bound is uniquely determined by the weight it sets to degree-$1$ vertices: This should be at least $5/6$ (as in \autoref{thm:proof_of_conj}) and at most $1$ (as in the function $f'$ above). 
Every intermediate value is feasible, provided the weights of all vertices with degree at least $3$ are adapted in the right way.   

Our proof of~\Cref{thm:characterization of ELBs for linear forests in intro} works with no extra effort for a more general problem, namely that of finding induced forest of caterpillars with maximum degree at most $k$ where $k\geq 2$ is a fixed constant, which we believe is of interest.  
Thus, for $k=2$ this corresponds to induced linear forests. 
We introduce this more general setting in the next section and prove a full characterization of the lower bounds for every fixed $k\geq 2$, see~\Cref{thm:characterization of ELBs for max-degree-k caterpillars} in the next section. 
\Cref{thm:characterization of ELBs for linear forests in intro} follows then by taking $k=2$. 


\section{Caterpillars of bounded degree}
\label{sec:caterillars_bounded_degree}

Given an integer $k$ with $k\geq 2$, let $\CC_k$ denote the set of all forests of caterpillars of maximum degree at most $k$. In particular $\LL = \CC_2$. 
Here is the characterization of extremal lower bounds for the parameter $\alpha_{\CC_k}$.

\begin{theorem}\label{thm:characterization of ELBs for max-degree-k caterpillars}
For every integer $k \geq 2$ and every $\varepsilon\in \R$ with $0 \leq \varepsilon \leq \frac 2{(k+1)(k+2)}$, the following function is an extremal lower bound for $\alpha_{\CC_k}$:
\[
f_{k,\varepsilon} : \mathbb N \to [0, 1] :
d \mapsto \begin{cases}
    1 &\text{ if } d=0\\
    1-\varepsilon &\text{ if } d=1\\
    \frac 2{d+1} &\text{ if } 2 \leq d \leq k\\
    \min\left\{(k+1)\varepsilon, \frac 2{d+1}\right\} &\text{ if } d \geq k+1.
\end{cases}
\]
Furthermore, these functions completely characterize the possible lower bounds: Every lower bound for $\alpha_{\CC_k}$ is dominated by $f_{k,\varepsilon}$ for some $\varepsilon$. 
\end{theorem}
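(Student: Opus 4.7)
My plan is to split the proof of \Cref{thm:characterization of ELBs for max-degree-k caterpillars} into three parts, mirroring its three assertions: that $f_{k,\varepsilon}$ is a valid lower bound for $\alpha_{\CC_k}$, that it is extremal, and that every lower bound for $\alpha_{\CC_k}$ is dominated by some $f_{k,\varepsilon}$ in the allowed range.

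For the lower bound, given a graph $G$ and a parameter $\varepsilon$, I would construct an appropriate tripartition $(A,B,C)$ of $V(G)$ and apply the ABC Lemma (\Cref{lemma:A-B-C}) to obtain an induced linear forest $L$ in $G$ in which the $B$-vertices appear as path endpoints and the $C$-vertices are isolated. The tripartition would be guided by the three regimes in the definition of $f_{k,\varepsilon}$: degree-$1$ vertices of $G$ (corresponding to the value $1-\varepsilon$) would go into $B$, vertices of degree $d\geq k+1$ for which $(k+1)\varepsilon \leq \tfrac{2}{d+1}$ (corresponding to the ``$(k+1)\varepsilon$'' branch of the min) into $C$, and the rest into $A$. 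To lift $L$ into an induced element of $\CC_k$, each spine vertex of $L$ would then be greedily augmented with available $G$-neighbors as hanging leaves, subject to the degree cap $k$. A careful accounting matching the per-class contributions of the ABC Lemma bound against the corresponding values of $f_{k,\varepsilon}$ would then yield $|V(F)|\geq \sum_{v\in V(G)} f_{k,\varepsilon}(d(v))$.

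For extremality and the characterization, I would rely on a compact family of witness graphs. The complete graphs $K_{d+1}$ (for $d\geq 2$) enforce $f(d)\leq \tfrac{2}{d+1}$, since $\alpha_{\CC_k}(K_{d+1})=2$. For the ``$(k+1)\varepsilon$'' branch, the witnesses would be the graphs $H_m$ obtained from $K_m$ (for $m\geq 1$) by attaching $k+1$ pendants to each of its vertices: $H_m$ then has $m$ centers of degree $d\coloneqq (m-1)+(k+1)=m+k$ and $m(k+1)$ pendants of degree~$1$, and a case analysis shows $\alpha_{\CC_k}(H_m)=m(k+1)$ (the independent set of all pendants). The inequality $f(H_m)\leq \alpha_{\CC_k}(H_m)$ yields $m\cdot f(m+k)+m(k+1)\cdot f(1)\leq m(k+1)$, i.e.\ $f(m+k)\leq (k+1)\bigl(1-f(1)\bigr)$. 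As $m$ ranges over $\{1,2,\dots\}$, this covers every degree $d\geq k+1$ and is tight precisely when $f(d)=(k+1)\varepsilon$ with $\varepsilon=1-f(1)$. For the characterization, given any valid lower bound $f$, setting $\varepsilon\coloneqq \min\bigl\{1-f(1),\,\tfrac{2}{(k+1)(k+2)}\bigr\}$ ensures $\varepsilon$ lies in the allowed range, and combining the $K_{d+1}$-constraints with the $H_m$-constraints gives $f(d)\leq f_{k,\varepsilon}(d)$ for every $d$.

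The main obstacle is the lower-bound step: the tripartition and the caterpillar extension of the ABC Lemma output must be calibrated so that the resulting size matches $\sum_{v} f_{k,\varepsilon}(d(v))$ simultaneously for every admissible $\varepsilon$. A secondary, more structural difficulty is verifying that $\alpha_{\CC_k}(H_m)=m(k+1)$---that including any center of $H_m$ in an induced element of $\CC_k$ cannot beat the all-pendants independent set---which, while intuitive, hinges on the forced ``cost'' of discarding pendant neighbors of each included center in order to respect the degree cap $k$.
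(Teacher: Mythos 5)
Your second half (extremality and the characterization) is essentially the paper's own argument: the constraints $\varphi(d)\le \frac{2}{d+1}$ from $K_{d+1}$, the witness graphs obtained from $K_m$ by attaching $k+1$ pendants to each vertex (these are exactly the graphs $H_{n,k}$ used in the paper, and $\alpha_{\CC_k}(H_{n,k})=(k+1)n$ is verified there by the same ``cost of including a center'' observation you flag), the choice $\varepsilon=1-\varphi(1)$ capped at $\frac{2}{(k+1)(k+2)}$, and extremality via pairwise incomparability of the $f_{k,\varepsilon}$. That part is fine.

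The lower-bound half, however, has a genuine gap, and it is precisely the part you defer as ``the main obstacle.'' With your tripartition (degree-$1$ vertices in $B$, high-degree vertices with $(k+1)\varepsilon\le\frac{2}{d+1}$ in $C$, the rest in $A$), the ABC Lemma only guarantees $\frac 56$ per degree-$1$ vertex, whereas you need $1-\varepsilon$, which is strictly larger whenever $\varepsilon<\frac 16$ (so always for $k\ge 3$), and it only guarantees $f_C(d)=\frac{2}{3(d+1)}$ per $C$-vertex, which can be far below the required $(k+1)\varepsilon$. So the ABC bound with your partition undercounts, and the entire burden falls on the ``greedy augmentation by hanging leaves,'' which is not quantified anywhere: the lemma gives no control over which linear forest is returned, and an adversarial output can leave nothing to add (e.g.\ in $H_{n,k}$ the lemma may return exactly the set of pendants, and then no center can be attached without exceeding degree $k$), nor is there any mechanism forcing the augmentation to recover the per-vertex deficits at degree-$1$ and $C$-vertices. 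Moreover, pendants attached to a vertex with many pendant neighbors simply cannot all be added, which your scheme does not address. The paper's proof resolves exactly these issues by a different route: it first proves the $\varepsilon$-free local bound of \Cref{thm:k-caterpillar without epsilon} (with the neighbor-sensitive weights $h_{k,G}$, which dominate every $f_{k,\varepsilon}$ simultaneously via the $D(v)\ge 0$ accounting), and in that proof it deletes the set $L$ of pendant vertices, reduces by induction to the case where no vertex has more than $k$ pendant neighbors and $G$ is connected, and applies the ABC Lemma to $G-L$ with the tripartition determined by the \emph{number of pendant neighbors} ($\le k-2$ in $A$, exactly $k-1$ in $B$, exactly $k$ in $C$); this is what guarantees that \emph{all} of $L$ can be added back without violating the degree cap $k$, and it is where the specific values of $f_B$ and $f_C$ are actually consumed. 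Without this (or an equivalent) calibration, your construction does not yield $\sum_v f_{k,\varepsilon}(d(v))$.
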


Let us make a few remarks before proving \Cref{thm:characterization of ELBs for max-degree-k caterpillars}.   
As mentioned earlier, \Cref{thm:characterization of ELBs for linear forests in intro} follows from the above theorem by taking $k=2$. 
Note also that for $\varepsilon = \frac 2{(k+1)(k+2)}$, one obtains the following extremal lower bound for  $\alpha_{\CC_k}$: 
\[
f_k : \mathbb N \to [0, 1] : d \mapsto \begin{cases}1 &\text{ if } d=0\\\frac {k(k+3)}{(k+1)(k+2)} &\text{ if } d=1\\\frac 2{d+1} &\text{ if } d\ge 2,\end{cases}
\]
which generalizes~\Cref{thm:proof_of_conj} (for $k=2$). 
By varying $k$ from $2$ to $+\infty$, one can think of the above lower bound $f_k$ for $\alpha_{\CC_k}$ as interpolating between the bounds in~\Cref{thm:proof_of_conj} and in~\Cref{cor:caterpillar}.  

Given the degree distribution of a graph $G$, it is not difficult to find a value of $\varepsilon$ in \Cref{thm:characterization of ELBs for max-degree-k caterpillars} giving the best possible lower bound on $\alpha_{\CC_k}$, that is, such that $f_{k,\varepsilon}(G)$ is maximized. 
This is the content of the following theorem. 

\begin{theorem}\label{thm:epsilon star for k-caterpillars}
Let $G$ be a graph, and for $d \geq 0$ let $n_d$ denote the number of vertices in $G$ with degree $d$. 
Let $\varepsilon_k^* \coloneqq \frac 2{(k+1)(D^*+1)}$, where $D^*$ is the smallest integer $D \geq k+1$ such that $(k+1)\sum_{d=k+1}^Dn_d \geq n_1$ if there is such an integer, otherwise let $\varepsilon_k^* \coloneqq 0$. 

Then, the function $t_{G,k} : [0, \frac 2{(k+1)(k+2)}] \to \mathbb R : \varepsilon \mapsto \sum_{v \in V(G)}f_{k,\varepsilon}(d(v))$ is maximized at $\varepsilon_k^*$.
In other words, $f_{k,\varepsilon_k^*}$ provides the best lower bound that can be achieved on $G$ in \Cref{thm:characterization of ELBs for max-degree-k caterpillars}.
\end{theorem}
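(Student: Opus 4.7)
The plan is to view $t_{G,k}$ as a function of a single real variable $\varepsilon \in [0, \frac{2}{(k+1)(k+2)}]$, prove that it is concave and piecewise linear, and then locate its maximum via a slope analysis. To start, I would isolate the $\varepsilon$-dependent terms: only $n_1$ and the $n_d$ for $d \geq k+1$ contribute nontrivially, so one may write
\[
t_{G,k}(\varepsilon) = C - n_1 \varepsilon + \sum_{d \geq k+1} n_d \cdot \min\left\{(k+1)\varepsilon, \frac{2}{d+1}\right\},
\]
where $C := n_0 + \sum_{d=2}^{k} \frac{2 n_d}{d+1}$ is independent of $\varepsilon$. Each summand of the displayed sum is concave and piecewise linear in $\varepsilon$, with a single break at $\varepsilon_d := \frac{2}{(k+1)(d+1)}$, so $t_{G,k}$ itself is concave and piecewise linear on the interval under consideration, with candidate breakpoints precisely the $\varepsilon_d$ with $d \geq k+1$ and $n_d > 0$.

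Next I would compute the slope on each linearity interval. On $(\varepsilon_{D+1}, \varepsilon_D)$ (using the convention $\varepsilon_{\infty} = 0$), every degree $d$ with $k+1 \leq d \leq D$ satisfies $(k+1)\varepsilon < \frac{2}{d+1}$, so the $\min$ equals $(k+1)\varepsilon$; for $d \geq D+1$, the $\min$ equals the constant $\frac{2}{d+1}$. The slope of $t_{G,k}$ on this interval is therefore
\[
s(D) := -n_1 + (k+1) \sum_{d=k+1}^{D} n_d,
\]
and crossing the breakpoint $\varepsilon_D$ upward drops the slope by $(k+1)n_D$, reconfirming concavity.

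By concavity, the maximum of $t_{G,k}$ is attained at the largest $\varepsilon$ at which the left-slope is still nonnegative, namely at $\varepsilon = \varepsilon_{D^*}$, where $D^*$ is the smallest $D \geq k+1$ with $s(D) \geq 0$, that is, with $(k+1)\sum_{d=k+1}^D n_d \geq n_1$; this matches the definition of $\varepsilon_k^*$ in the statement. If no such $D$ exists, then $s(D) < 0$ for every $D$, so $t_{G,k}$ is strictly decreasing on $[0, \frac{2}{(k+1)(k+2)}]$ and is maximized at $\varepsilon = 0 = \varepsilon_k^*$. The whole argument is essentially a routine concavity/piecewise-linear optimization; the only points that need a touch of care are the bookkeeping at breakpoints (for instance, verifying that $D^*$, when it exists with $n_1 > 0$, necessarily has $n_{D^*} > 0$, so that $\varepsilon_{D^*}$ really is a breakpoint), and checking that the right endpoint $\varepsilon = \frac{2}{(k+1)(k+2)}$ is correctly recovered when $D^* = k+1$ (which happens exactly when $(k+1)n_{k+1} \geq n_1$, in particular when $n_1 = 0$).
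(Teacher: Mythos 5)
Your argument is correct and is essentially the paper's proof in a slightly different packaging: the paper likewise reduces everything to piecewise linearity and to the sign of $(k+1)\sum_{d=k+1}^{D} n_d - n_1$ on each linearity interval (it compares the values at consecutive breakpoints instead of invoking concavity, which amounts to the same slope computation), so the two proofs coincide in substance. One cosmetic slip: your constant $C$ omits the $+n_1$ coming from the degree-one contribution $n_1(1-\varepsilon)$, but since this is an additive constant it has no effect on where the maximum is attained.
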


\begin{proof}
First observe that $t_{G,k}$ is piecewise linear. Indeed, for every integer $D \geq k+1$, on the interval $I_D = [\frac 2{(k+1)(D+2)}, \frac 2{(k+1)(D+1)}]$ we have:
\[t_{G,k}(\varepsilon) = \left(n_0 + n_1 + \sum_{d=2}^k\frac {2n_d}{d+1} + \sum_{d=D+1}^{\Delta(G)}\frac {2n_d}{d+1}\right) + \varepsilon\left((k+1)\sum_{d=k+1}^Dn_d - n_1\right).\]

Thus, the linear components of $t_{G,k}$ consist of the interval $[0, \frac 2{(k+1)(\Delta(G)+1)}]$ and the intervals $I_D$ for $k+1 \leq D \leq \Delta(G)-1$, and $t_{G,k}$ is monotone on these intervals. Hence, it is enough to look at the values $t_{G,k}(0)$ and $t_{G,k}(\frac 2{(k+1)(D+1)})$ for $D \geq k+1$.

If $(k+1)\sum_{d=k+1}^{\Delta(G)}n_d < n_1$, then for every $D \geq k+1$ and $\varepsilon = \frac 2{(k+1)(D+1)}$:
\[t_{G,k}(0) - t_{G,k}(\varepsilon) = n_1\varepsilon - \sum_{d=k+1}^{\Delta(G)} n_df_{k,\varepsilon}(d) \geq n_1\varepsilon - \sum_{d=k+1}^{\Delta(G)}n_d(k+1)\varepsilon  > 0,\]
and so $t_{G,k}$ is maximized at $\varepsilon = \varepsilon_k^* = 0$. 

If $(k+1)\sum_{d=k+1}^{\Delta(G)}n_d \geq n_1$, for $D \geq k+1$ define
\[T_{G,k}(D) \coloneqq t_{G,k}\left(\frac 2{(k+1)(D+1)}\right) - t_{G,k}\left(\frac 2{(k+1)(D+2)}\right).\]
Now, since
\begin{align*}
    T_{G,k}(D) &= \frac {2n_1}{k+1}\left(\frac 1{D+2}-\frac 1{D+1}\right) + \sum_{d=k+1}^D2n_d\left(\frac 1{D+1} - \frac 1{D+2}\right) \\
    &= \frac 2{(k+1)(D+1)(D+2)}\left((k+1)\sum_{d=k+1}^Dn_d - n_1\right),
\end{align*}
we see that $T_{G,k}(D) \geq 0$ for every $D \geq D^*$, and conversely, $T_{G,k}(D) < 0$ for every $D < D^*$. Therefore, $t_{G,k}(\varepsilon)$ is maximized at $\varepsilon = \varepsilon_k^* = \frac 2{(k+1)(D^*+1)}$.
\end{proof}

\Cref{thm:characterization of ELBs for max-degree-k caterpillars} provides an exact characterization of all the lower bounds for $\alpha_{\CC_k}$ that only depend on the degree sequence of the graph $G$ under consideration. 
That is, knowing only the degree sequence of $G$, it is not possible to give a better lower bound for $\alpha_{\CC_k}$ than those described in the theorem. 
However, if we know some extra local information about the degree-$1$ vertices of $G$, namely the degrees of their neighbors, it is possible to state a more precise lower bound on $\alpha_{\CC_k}$ that
dominates all the bounds provided in \Cref{thm:characterization of ELBs for max-degree-k caterpillars}.
To state this lower bound, we need to introduce the following function $h_{k,G} : V(G) \to [0,1]$ where $G$ is a graph and $k$ is an integer with $k \geq 2$: 
\[h_{k,G}(v) \coloneqq \begin{cases}
    1 &\text{ if } d(v) = 0\\
    1 &\text{ if } d(v) = 1 \text{ and } d(w) \leq k \text{ where $N(v) = \{w\}$}\\
    1-\frac 2{(k+1)(d(w)+1)} &\text{ if } d(v) = 1 \text{ and } d(w) \geq k+1 \text{ where $N(v) = \{w\}$}\\
    \frac 2{d(v)+1} &\text{ if } d(v) \geq 2.
\end{cases}\]
Here is the refined lower bound on $\alpha_{\CC_k}$. 

\begin{theorem}\label{thm:k-caterpillar without epsilon}
For every integer $k \geq 2$, every graph $G$ has an induced forest of caterpillars of maximum degree at most $k$ with at least $\sum_{v \in V(G)}h_{k,G}(v)$ vertices.
\end{theorem}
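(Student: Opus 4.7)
The plan is to prove the statement by induction on $|V(G)|$, splitting the inductive step according to whether $\Delta(G) \leq k$ or $\Delta(G) \geq k+1$. In the easy case $\Delta(G) \leq k$, the exception clause in the definition of $h_{k,G}$ never triggers, so $h_{k,G}(v)$ equals $1$ for $d(v) \leq 1$ and $2/(d(v)+1)$ for $d(v) \geq 2$. Deleting the isolated vertices of $G$ and applying \Cref{cor:caterpillar} yields an induced forest of caterpillars of size at least $\sum_{v:\, d(v)\geq 1} 2/(d(v)+1)$; adding the isolated vertices back gives an induced subgraph of $G$ of size at least $\sum_v h_{k,G}(v)$, whose maximum degree is at most $\Delta(G) \leq k$, hence belongs to $\CC_k$.

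For the case $\Delta(G) \geq k+1$, let $v$ be a vertex of maximum degree $d$, and let $F'$ be the induced $\CC_k$-subgraph of $G-v$ guaranteed by induction. Since $v \notin V(F')$, the set $V(F')$ induces a $\CC_k$-subgraph of $G$ as well, so it suffices to show
\[
\sum_{u \in N(v)} \bigl(h_{k,G-v}(u) - h_{k,G}(u)\bigr) \ \geq\ \frac{2}{d+1}.
\]
I plan to estimate each summand according to $d_G(u)$. For a leaf $u \in N(v)$, the exception clause in $h_{k,G}$ triggers (as $d \geq k+1$), so the contribution is exactly $2/((k+1)(d+1))$. For $u$ with $d_G(u) \geq 3$, both $h$-values come from the $2/(d+1)$ formula, giving a contribution of $2/(d_G(u)(d_G(u)+1)) \geq 2/(d(d+1))$ since $d_G(u) \leq d$. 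The delicate case is $d_G(u) = 2$: here $u$ becomes a leaf in $G-v$ whose remaining neighbor $w$ may satisfy $d_{G-v}(w) \geq k+1$, and in the worst case $d_{G-v}(w) = k+1$, giving $h_{k,G-v}(u) - h_{k,G}(u) \geq 1/3 - 2/((k+1)(k+2))$. This lower bound is still $\geq 2/(d(d+1))$ precisely because $(k+1)(k+2) \geq 12$ and $d \geq k+1$, with equality at $k=2$, $d=3$.

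Letting $\ell$ denote the number of leaf-neighbors of $v$, the per-neighbor bounds sum to at least
\[
\frac{2\ell}{(k+1)(d+1)} + \frac{2(d-\ell)}{d(d+1)} \ =\ \frac{2}{d+1}\left(\frac{\ell}{k+1} + \frac{d-\ell}{d}\right),
\]
and the parenthesised factor equals $1 + \ell(d-k-1)/(d(k+1)) \geq 1$ because $d \geq k+1$, which closes the induction. I expect the main obstacle to be the $d_G(u) = 2$ subcase: the constant $2/((k+1)(d(w)+1))$ subtracted from leaves with high-degree neighbors in $h_{k,G}$ is calibrated exactly so that this subcase closes at $k=2$, and it is precisely this tightness that makes $h_{k,G}$ the right refined weight to pair with the full $2/(d+1)$ retained on high-degree vertices.
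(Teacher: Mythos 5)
Your reduction of the inductive step to the inequality $\sum_{u \in N(v)}\bigl(h_{k,G-v}(u) - h_{k,G}(u)\bigr) \geq \frac{2}{d+1}$ silently assumes that $h_{k,G-v}(u) \geq h_{k,G}(u)$ for every vertex $u \notin N[v]$, and that is false. The weight of a leaf depends on the degree of its support vertex, and when that degree decreases from $x$ to $x-1$ with both values at least $k+1$, the leaf's weight \emph{drops} by $\frac{2}{(k+1)x(x+1)}$, because the penalty $\frac{2}{(k+1)(d(w)+1)}$ grows as $d(w)$ shrinks. So every leaf at distance two from $v$ hanging on a neighbor $w$ of $v$ with $d_G(w) \geq k+2$ contributes a negative term that your case analysis never sees. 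Worse, the inequality you would actually need, $\sum_u h_{k,G-v}(u) \geq \sum_u h_{k,G}(u)$ for $v$ of maximum degree, is simply false. Take $k=2$ and let $G$ consist of a vertex $v$ joined to five vertices $w_1,\dots,w_5$, each $w_i$ additionally adjacent to three pendant leaves; then $v$ is the unique vertex of maximum degree $5$, and
\[
\sum_u h_{2,G}(u) = \tfrac 13 + 5\cdot\tfrac 25 + 15\Bigl(1 - \tfrac 2{3\cdot 5}\Bigr) = \tfrac {46}3,
\qquad
\sum_u h_{2,G-v}(u) = 5\cdot\tfrac 12 + 15\Bigl(1-\tfrac 2{3\cdot 4}\Bigr) = 15 < \tfrac{46}3 .
\]
Deleting the maximum-degree vertex strictly decreases the $h$-sum, so the induction cannot close this way. (The theorem itself is fine for this $G$: the $15$ leaves together with $v$ induce a linear forest on $16 > \frac{46}{3}$ vertices.)

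This is not a repairable detail of your estimates but the structural reason the paper takes a different route: it never deletes a single high-degree vertex, but instead removes the whole set $L$ of leaves, records how many leaves each surviving vertex lost via the tripartition $(A',B',C')$ (at most $k-2$, exactly $k-1$, exactly $k$ leaf-neighbors), and applies the ABC Lemma (\Cref{lemma:A-B-C}) to $G-L$, the lemma itself being proved by a minimal-counterexample analysis rather than degree-greedy induction. The paper also remarks that the ``remove a maximum-degree vertex'' induction only succeeds for the single extremal function of \Cref{thm:proof_of_conj}; your computation in the case $d_G(u)=2$ is exactly where the tightness lies, but the distance-two leaves defeat the step before you even get there. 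Your handling of the case $\Delta(G)\leq k$ via \Cref{cor:caterpillar} and your per-neighbor estimates inside $N(v)$ are correct as far as they go, but they do not establish the inequality you need.
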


\Cref{thm:k-caterpillar without epsilon} is the main technical result of the paper. 
\Cref{thm:characterization of ELBs for max-degree-k caterpillars} follows easily from \Cref{thm:k-caterpillar without epsilon}, as we now explain. 
From a technical point of view, \Cref{thm:k-caterpillar without epsilon} can be thought of as a ``local strengthening'' of \Cref{thm:characterization of ELBs for max-degree-k caterpillars} to help the proof by induction go through. 
In the following proof, and in the rest of the paper, 
a {\em leaf} of a graph $G$ is any vertex $v$ with degree exactly $1$. (Thus, two adjacent vertices of degree $1$ are both considered to be leaves.)

\begin{proof}[Proof of \Cref{thm:characterization of ELBs for max-degree-k caterpillars} assuming \Cref{thm:k-caterpillar without epsilon}]
Given an integer $k \geq 2$ and a real number $\varepsilon$ with $0 \leq \varepsilon \leq \frac 2{(k+1)(k+2)}$, we need to show that $f_{k,\varepsilon}$ is an extremal lower bound for $\alpha_{\CC_k}$. 
First, we show that it is a lower bound (using \Cref{thm:k-caterpillar without epsilon}), and then we construct graphs showing that it is extremal. 

To show that $f_{k,\varepsilon}$ is a lower bound for $\alpha_{\CC_k}$, consider any graph $G$. 
We need to show that $\alpha_{\CC_k}(G) \geq \sum_{v \in V(G)}f_{k,\varepsilon}(v)$. 
Clearly, it is enough to prove it in the case where $G$ is connected. 
Furthermore, the inequality clearly holds if $|V(G)| \leq 2$, thus we may assume that $|V(G)| \geq 3$. 
In particular, $\Delta(G) \geq 2$.   

Let $L$ be the set of leaves of $G$.  
(Recall that a {\em leaf} is defined as a vertex of degree $1$.) 
Given a vertex $v$ of $G$ with degree at least $2$, let $\ell(v)$ denote the number of neighbors of $v$ that are leaves, and let 
\[D(v) \coloneqq \left(h_{k,G}(v) + \sum_{w \in N(v) \cap L}h_{k,G}(w)\right) - \left(f_{k,\varepsilon}(d(v)) + \sum_{w \in N(v) \cap L}f_{k,\varepsilon}(d(w))\right).\]
Since every leaf is adjacent to a non-leaf vertex (since $G$ is connected and $|V(G)| \geq 3$), it follows that 
\[
\sum_{v \in V(G)}(h_{k,G}(v) - f_{k,\varepsilon}(d(v))) = \sum_{v \in V(G)-L}D(v).
\]

Consider some vertex $v$ with $d(v) \geq 2$. 
If $2 \leq d(v) \leq k$, then
\[D(v) = \left(\frac 2{d(v)+1}+\ell(v)\right) - \left(\frac 2{d(v)+1} + \ell(v) \cdot (1-\varepsilon)\right) = \ell(v)\varepsilon \geq 0.\]

If on the other hand $d(v) \geq k+1$, then
\[f_{k,\varepsilon}(d(v)) + \sum_{w \in N(v) \cap L}f_{k,\varepsilon}(d(w)) = \ell(v) + \min\left\{(k+1)\varepsilon, \frac 2{d(v)+1}\right\} - \ell(v)\varepsilon.\]
Thus, either $(k+1)\varepsilon \leq \frac 2{d(v)+1}$, in which case:
\[D(v) = \frac {2(k+1-\ell(v))}{(k+1)(d(v)+1)} - (k+1-\ell(v))\varepsilon \geq \frac {2(k+1-\ell(v))}{(k+1)(d(v)+1)} - \frac {2(k+1-\ell(v))}{(k+1)(d(v)+1)} = 0,\]
or $(k+1)\varepsilon>\frac 2{d(v)+1}$, in which case:
\[D(v) = \sum_{w \in N(v) \cap L}\left(h_{k,G}(w)-f_{k,\varepsilon}(d(w))\right) = \ell(v)\left(\varepsilon - \frac 2{(k+1)(d(v)+1)}\right) \geq 0.\]

Therefore, in all possible cases for $v$ we have $D(v) \geq 0$. 

It follows that
\[\sum_{v \in V(G)}h_{k,G}(v) = \sum_{v \in V(G)}f_{k,\varepsilon}(d(v)) + \sum_{v \in V(G)-L}D(v) \geq \sum_{v \in V(G)}f_{k,\varepsilon}(d(v)).\]

Since $\alpha_{\CC_k}(G) \geq \sum_{v \in V(G)}h_{k,G}(v)$ by \Cref{thm:k-caterpillar without epsilon}, this concludes the proof that $f_{k,\varepsilon}$ is a lower bound for $\alpha_{\CC_k}$.

Now it remains to show that (i) every lower bound for $\alpha_{\CC_k}$ is bounded by $f_{k,\varepsilon}$ for some $\varepsilon$ satisfying $0 \leq \varepsilon \leq \frac 2{(k+1)(k+2)}$, and (ii) that these bounds are all extremal. 
Let us first show (i). 
Let $\varphi : \mathbb N \to \mathbb R$ be a lower bound for $\alpha_{\CC_k}$ and define $\varepsilon \coloneqq 1-\varphi(1)$.
Clearly, 
\[
\varphi(d) \leq 1  \qquad  \textrm{ for every $d \geq 0$}.
\]
Also, 
\[
\varphi(d) \leq \frac 2{d+1}  \qquad  \textrm{ for every $ d \geq 2$}, 
\]
since
\[(d+1)\varphi(d) = \varphi(K_{d+1}) \leq \alpha_{\CC_k}(K_{d+1}) = 2.\]
If $\varepsilon \geq \frac 2{(k+1)(k+2)}$, then $(k+1)\varepsilon \geq \frac{2}{k+2} \geq \frac{2}{d+1}$ for all $d \geq k+1$. 
We deduce that $\varphi \leq f_{k,\frac 2{(k+1)(k+2)}}$, and we are done. 

Now, assume that $\varepsilon < \frac 2{(k+1)(k+2)}$. 
We claim that  $\varphi$ is dominated by $f_{k,\varepsilon}$.
Note that 
\begin{itemize}
    \item $\varphi(0) \leq 1 = f_{k,\varepsilon}(0)$;
    \item $\varphi(1) = 1-\varepsilon = f_{k,\varepsilon}(1)$, and
    \item $\varphi(d) \leq \frac 2{d+1} = f_{k,\varepsilon}(d)$ for $2 \leq d \leq k$.
\end{itemize}
It remains to show that $\varphi(d) \leq f_{k,\varepsilon}(d)$ for all $d \geq k + 1$. To do so, 
let $n \geq 1$, and define the graph $H_{n,k}$ as follows: 
\[
V(H_{n,k}) = \{(v, i) : 1 \leq v \leq n, 0 \leq i \leq k+1\}, 
\]
and there is an edge between vertex $(v, i)$ and vertex $(w, j)$ in $H_{n,k}$ if and only if $v=w$ and $i=0$, or $v \neq w$ and $i = j = 0$. Informally, $H_{n,k}$ is the graph obtained by starting with $K_n$ (whose vertices are $\{(v, 0) : 1 \leq v \leq n\}$) 
and adding exactly $k+1$ leaves to each vertex. 

We claim that $\alpha_{\CC_k}(H_{n,k}) = (k+1)n$. It is clear that for every $1 \leq v \leq n$, at most $k+1$ of the vertices $(v, 0), \ldots, (v, k+1)$ can be in any induced forest of maximum degree at most $k$ in $H_{n,k}$, otherwise $(v, 0)$ must be in it and must have degree at least $k+1$, hence the inequality $\alpha_{\CC_k}(H_{n,k}) \leq (k+1)n$. This upper bound holds with equality, as witnessed by the forest induced by the set $\{(v, i) : 1 \leq v \leq n, 1 \leq i \leq k+1\}$.

We deduce that the function $\varphi$ must satisfy
\[n\varphi(n+k) + (k+1)n\varphi(1) 
\leq \alpha_{\CC_k}(H_{n,k}) = (k+1)n.\]
In particular, for $d=n+k \geq k+1$:
\[\varphi(d) \leq \frac 1n\left((k+1)n\left(1-\varphi(1)\right)\right) = (k+1)\varepsilon.\]
Therefore, $\varphi(d) \leq \min\{(k+1)\varepsilon, \frac 2{d+1}\} = f_{k,\varepsilon}(d)$ holds for all $d\geq k+1$, and we conclude that $\varphi \leq f_{k,\varepsilon}$.

It remains to show property (ii), stating that $f_{k,\varepsilon}$ is extremal for every $\varepsilon$. 
Let $\varepsilon_1$ and $\varepsilon_2$ be such that $0 \leq \varepsilon_1 < \varepsilon_2 \leq \frac 2{(k+1)(k+2)}$. 
Then
\[
f_{k,\varepsilon_1}(1) = 1-\varepsilon_1 > 1-\varepsilon_2 = f_{k,\varepsilon_2}(1) 
\]
and
\[
f_{k,\varepsilon_1}(k+1) = (k+1)\varepsilon_1 < (k+1)\varepsilon_2 = f_{k,\varepsilon_2}(k+1).
\]
We deduce that neither of $f_{k,\varepsilon_1}$ and $f_{k,\varepsilon_2}$ dominates the other. 
It follows that $f_{k,\varepsilon}$ is extremal for every $\varepsilon$ with $0 \leq \varepsilon\leq \frac 2{(k+1)(k+2)}$. 
\end{proof}

The key to the proof of \Cref{thm:k-caterpillar without epsilon} is the following technical lemma, which we call the ABC lemma. 

\begin{lemma}[ABC lemma]\label{lemma:A-B-C}
Let $G$ be a graph and let $(A, B, C)$ be a partition of its vertex set (with some parts possibly empty).  
Then $G$ contains an induced linear forest $F$ satisfying
\begin{itemize}
    \item $d_F(v) \leq 2$ for all vertices $v \in V(F) \cap A$, 
    \item $d_F(v) \leq 1$ for all vertices $v \in V(F) \cap B$, 
    \item $d_F(v) \leq 0$ for all vertices $v \in V(F) \cap C$, and
    \item $\abs{V(F)} \geq f(G, A, B, C) \coloneqq \sum_{v \in V(G)}f(v; G, A, B, C)$,
\end{itemize}
where
\begin{align*}  
f(v; G, A, B, C)   &= \begin{cases}f_A(d_G(v))\phantom{1} &\text{ if } v \in A\phantom{d = 2}\\f_B(d_G(v)) &\text{ if } v \in B\\f_C(d_G(v)) &\text{ if } v \in C\end{cases} \qquad\qquad
f_A(d) = \begin{cases}1\phantom{f_A(d(v))} &\text{ if } d  = 0\\\frac 56 &\text{ if } d = 1\\\frac 2{d+1} &\text{ if } d \geq 2\end{cases}\\[1ex]
f_B(d) &= \begin{cases}1\phantom{f_A(d(v))} &\text{ if } d = 0\\\frac 56 &\text{ if } d=1\\\frac 13 &\text{ if } d = 2\phantom{v \in A}\\\frac 4{3(d+1)} &\text{ if } d \geq 3\end{cases}\qquad\qquad
f_C(d) = \begin{cases}1\phantom{f_A(d(v))} &\text{ if } d = 0\\\frac 16 &\text{ if } d=1\\\frac 16 &\text{ if } d=2\\\frac {2}{3(d+1)} &\text{ if } d \geq 3.\end{cases}
\end{align*}
\end{lemma}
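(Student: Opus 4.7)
The plan is to prove the ABC Lemma by induction on $|V(G)|$, with the empty graph as the vacuous base case. Throughout, I would refer to the three functions $f_A, f_B, f_C$ collectively as a single weight $f(v; G, A, B, C)$ depending on the class of $v$, and write $f(G)$ for the total sum.

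First I would dispatch the easy reductions. If $G$ has an isolated vertex $v$, then $f(v; G, A, B, C) = 1$ regardless of $v$'s class, so including $v$ in $F$ and applying induction to $G-v$ closes the case. If $\Delta(G) \leq 2$, every component of $G$ is a path or a cycle, and the lemma follows by component-by-component analysis: on each path one keeps all vertices that fit their class constraint, deleting only those $B$-vertices that would become internal or $C$-vertices with a selected neighbor; on each cycle one additionally removes at least one vertex. The required local inequality then reduces to a finite check of values like $f_B(2) = 1/3$ and $f_C(1) = f_C(2) = 1/6$, analogous to (but more involved than) the $\Delta \leq 2$ case of \Cref{thm:proof_of_conj}.

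For $\Delta(G) \geq 3$, the main tactic is deletion: pick a vertex $v$ of maximum degree and apply induction to $G-v$ with the partition restricted to $V(G)\setminus\{v\}$. This suffices whenever the \emph{deletion inequality}
\begin{equation*}
f_{Z(v)}(d_G(v)) \;\leq\; \sum_{w \in N_G(v)} \bigl[\, f_{Z(w)}(d_G(w)-1) - f_{Z(w)}(d_G(w)) \,\bigr]
\end{equation*}
holds, where $Z(\cdot)$ denotes the class $A$, $B$, or $C$. The weights are calibrated so that this is tight in ``regular'' configurations (e.g.\ $v \in A$ with all neighbors in $A$ of the same degree), as already exploited in the proof of \Cref{thm:proof_of_conj}. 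When the deletion inequality fails, I would exploit the algebraic relations $f_B(d) = 2 f_C(d)$ for $d \geq 2$ and $f_A(d) = 3 f_C(d)$ for $d \geq 3$ to reduce to a configuration that can be handled by constructing $F$ explicitly through $v$---for example, including $v$ together with a short path of its neighbors, paying the joint weight via direct inclusion rather than a deletion accounting.

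The main obstacle is the case analysis when the deletion inequality fails. The delicate cases occur at small degrees where the functions take non-generic values---notably the identity $f_B(2) - f_B(3) = 0$, which prevents a $B$-vertex of degree $3$ from providing any deletion slack when a high-degree neighbor is removed. I expect the resolution to involve a refined choice of $v$ according to a lexicographic criterion (maximum degree, then, say, minimum number of degree-$3$ $B$-neighbors), combined with structural reductions that may temporarily reassign a vertex across the partition ($A \to B$ or $B \to C$) while the weight functions absorb the resulting deficit via the relations above.
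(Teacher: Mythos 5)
There is a genuine gap: the entire difficulty of \Cref{lemma:A-B-C} lies in the case where your deletion inequality fails, and your proposal does not actually contain an argument for it. Deleting a maximum-degree vertex $v$ pays $f(v;G)$ and recovers only $\sum_{w\in N(v)}\bigl(f_{Z(w)}(d(w)-1)-f_{Z(w)}(d(w))\bigr)$, and this recovery can be \emph{zero}: not only $f_B(2)-f_B(3)=0$, as you note, but also $f_C(1)-f_C(2)=0$ and $f_C(2)-f_C(3)=0$, so any vertex all of whose neighbours lie in $B_3\cup C_2\cup C_3$ gives no slack at all, no matter how its degree compares to theirs. This is exactly why the paper abandons the scheme of \Cref{thm:proof_of_conj} for this lemma: the identity $f(v;G)=d(v)\gamma(v)$ (where $\gamma$ is the per-edge gain) fails for these classes, and no tie-breaking rule in the choice of a maximum-degree vertex fixes this, since a maximum-degree vertex may be surrounded entirely by such zero-gain vertices. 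Your proposed remedies --- a lexicographic refinement of the pivot, ``temporarily reassigning'' vertices between classes, or ``including $v$ together with a short path of its neighbors'' --- are stated as expectations, not arguments; in particular the direct-inclusion move must beat the full weight of the closed neighbourhood (as in \Cref{claim:ABC:ILF size is bouded by sum of weights}), and nothing in the proposal shows the relevant configurations are constrained enough for that, nor do the scaling relations $f_B=2f_C$ and $f_A=3f_C$ (which do hold in the stated ranges) by themselves produce such a constraint.

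For comparison, the paper's proof takes a minimal counterexample and first proves a long sequence of structural claims ($\delta(G)\geq 2$, $B_2=C_2=\varnothing$, no $A_2$--$B_3$ edge, no $B_3$--$C$ edge, at most one $B_3$ neighbour of a $B_3$ vertex, no $C_3$--$C_3$ edge, \dots), then pivots not on a maximum-degree vertex but on a vertex $v^*$ of \emph{minimum positive gain}, for which $f(v^*;G)=d(v^*)\gamma(v^*)$ does hold; the zero-gain neighbours of $v^*$ are then eliminated one by one via local surgery of the form $G-v+xy$ or $G-v^*+xy+st$ combined with promotions/demotions of vertices across $(A,B,C)$, where the added edges guarantee that the forest obtained by induction still respects the original partition. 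This machinery (in particular the edge-addition trick and the budget claims \Cref{claim:ABC:weight is bigger than gain of neighbors} and \Cref{claim:ABC:demoting and lowering degree costs at most 1/6}) is the actual content of the proof, and it is precisely the part your outline leaves open. The easy reductions you do give (isolated vertices, $\Delta\leq 2$ by component analysis) are fine but peripheral.
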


Before proving \Cref{thm:k-caterpillar without epsilon} using \Cref{lemma:A-B-C}, let us point out that the values of $f_A(d)$ are best possible, as already discussed before. 
The values of $f_B(d)$ and $f_C(d)$ for $d=0,1,2$ are best possible too. 
This is clear for $f_B(0)$, $f_B(1)$, and $f_C(0)$. 
For $f_B(2), f_C(1), f_C(2)$, this is shown by the examples in \Cref{fig:ABC:tight bounds}.  
(For $d\geq 3$, the values of $f_B(d)$ and $f_C(d)$ in  \Cref{lemma:A-B-C} are most likely not best possible but they are good enough for our purposes.) 

\begin{figure}[!ht]
\centering
\begin{tikzpicture}[-,A/.style={draw,circle,thick},B/.style={draw,rectangle,thick},C/.style={draw,diamond,thick}]
\node[A] (K2-A) at (5.5, 0) {};
\node[C] (K2-C) at (6.5, 0) {};
\path (K2-A) edge (K2-C);

\node[C] (K3-C1) at (11, 0) {};
\node[C] (K3-C2) at (12, 0) {};
\node[A] (K3-A) at (11.5, .866) {};
\path (K3-C1) edge (K3-C2) edge (K3-A);
\path (K3-A) edge (K3-C2);

\node[A] (P3-A1) at (0, 0) {};
\node[B] (P3-B) at (1, 0) {\phantom{x}};
\node[A] (P3-A2) at (2, 0) {};
\path (P3-B) edge (P3-A1) edge (P3-A2);
\node[anchor=west] (annotK2) at (4, -1) {$f_C(1) \leq 1-f_A(1) = \frac 16$};
\node[anchor=west] (annotK3) at (9.25, -1) {$f_C(2) \leq \frac 12(1-f_A(2)) = \frac 16$};
\node[anchor=west] (annotP3) at (-1.5, -1)  {$f_B(2) \leq 2(1-f_A(1)) = \frac 13$};
\end{tikzpicture}
\caption{Examples showing that the values of $f_B(2)$, $f_C(1)$, and $f_C(2)$ in \Cref{lemma:A-B-C} are best possible. Vertices in $A$ are shown as circles, vertices in $B$ are shown as squares, and vertices in $C$ as diamonds.\label{fig:ABC:tight bounds}}
\end{figure}
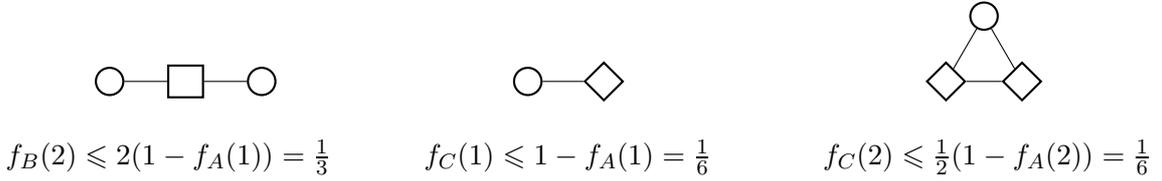

In order to lighten the notation somewhat, we will abbreviate $f(v; G, A, B, C)$ into $f(v; G)$ in the proofs when the partition $(A, B, C)$ of $V(G)$ is clear from the context. 

Now, let us show that \Cref{lemma:A-B-C} implies 
\Cref{thm:k-caterpillar without epsilon}. 
The next section will be devoted to the proof of  \Cref{lemma:A-B-C}. 

\begin{proof}[Proof of \Cref{thm:k-caterpillar without epsilon} assuming \Cref{lemma:A-B-C}]
Let $k \geq 2$ and let $G$ be a graph. 
We need to show that $G$ contains an induced forest of caterpillars of maximum degree at most $k$ with at least $\sum_{v \in V(G)}h_{k,G}(v)$ vertices. 
The proof is by induction on $|V(G)|$, with the base case $|V(G)| \leq 2$ being easily seen to be true.  

For the inductive part, suppose that $|V(G)| \geq 3$ and that the theorem holds for graphs with a smaller number of vertices. 
If $G$ contains a vertex $u$ adjacent to at least $k+1$ leaves, then 
$\sum_{v \in V(G-u)}h_{k,G-u}(v) \geq \sum_{v \in V(G)}h_{k,G}(v)$, and we are done by the inductive hypothesis applied to $G-u$. 
Thus, we may assume that there is no such vertex in $G$. 
Similarly, if $G$ is not connected, we are done by applying the inductive hypothesis on the components of $G$, so we may assume that $G$ is connected. 

Let $L$ be the set of leaves of $G$. Since $G$ is connected and $|V(G)|\geq 3$, no leaf is adjacent to another leaf; that is, $L$ is an independent set. 
We also remark that $L$ could possibly be empty. 

Define the following sets:
\begin{align*}
    A' &\coloneqq \{v \in V(G) \setminus L \st \abs {N(v) \cap L} \leq k-2\} \\
    B' &\coloneqq \{v \in V(G) \setminus L \st \abs {N(v) \cap L} = k-1\} \\
    C' &\coloneqq \{v \in V(G) \setminus L \st \abs {N(v) \cap L} = k\}.
\end{align*}

Applying \Cref{lemma:A-B-C} on $G' \coloneqq G - L$ with the partition $(A', B', C')$ of $V(G')$, we obtain an induced linear forest $F'$ in $G'$ satisfying the degree constraints of the lemma. Now define $F \coloneqq G[V(F') \cup L]$. Clearly, $F$ is an induced forest of caterpillars in $G$ satisfying $\Delta(F) \leq k$. 
We will show that $F$ has the desired number of vertices.

For a vertex $v$ of $G$, let $\ell(v)$ denote the number of neighbors of $v$ in $G$ that are leaves. 
Note that every vertex $v \in V(G')$ satisfies $d_G(v) \geq 2$ and thus $h_{k,G}(v) = \frac 2{d_G(v)+1}$. 
For such a vertex $v$, define 
\[D(v) \coloneqq \left(\ell(v) + f(v;G')\right) - \left(h_{k,G}(v) + \sum_{w \in N(v) \cap L}h_{k,G}(w)\right).\]

We claim that $D(v) \geq 0$ holds for every vertex  $v \in A' \cup B' \cup C'$. 
To show this, let us fix some vertex  $v \in V(G')$. 

{\bf Case~1: $d_G(v) \leq k$.}  
Then $h_{k,G}(v) = \frac 2{d_G(v)+1}$ and $h_{k,G}(w) = 1$ for every vertex $w \in N(v) \cap L$. Thus, $D(v) = f(v;G') - \frac 2{d_G(v)+1}$.

If $v \in A'$, then $D(v) = f_{A}(d_{G'}(v)) - \frac 2{d_G(v)+1} \geq 0$ since $d_{G'}(v) \leq d_G(v)$.

If $v \in B'$, then $\ell(v)=k-1 \geq 1$, so $d_{G'}(v) \leq 1$ and $f_{B}(d_{G'}(v)) \geq \frac 56$, implying $D(v) \geq \frac 56 - \frac 2{d_G(v)+1} > 0$. 

If $v \in C'$, then $d_G(v)=\ell(v)=k$, $N(v) \subseteq L$ and thus $D(v) = 1 - \frac 2{d_G(v)+1} > 0$. 

{\bf Case~2: $d_G(v) \geq k+1$.} 
Then $h_{k,G}(w) = 1-\frac 2{(k+1)(d_G(v)+1)}$ for every vertex $w \in N(v) \cap L$, and thus
\[D(v) = f(v;G') - \frac 2{d_G(v)+1}\frac {k+1-\ell(v)}{k+1}.\]

If $v \in A'$, then $f(v;G')=f_{A}(d_{G'}(v)) \geq \frac 2{d_G(v)+1}$, and thus $D(v) \geq 0$ since $\ell(v) \geq 0$.

If $v \in B'$, then $\ell(v) = k-1$, implying
\[D(v) = f_{B}(d_{G'}(v)) - \frac 2{d_G(v)+1}\frac 2{k+1} \geq f_{B}(d_{G'}(v)) - \frac 4{3(d_G(v)+1)} \geq f_{B}(d_{G'}(v)) - \frac 13.\] 

Observe that $d_{G'}(v) \geq 2$. 
If $d_{G'}(v) = 2$ then $f_{B'}(d_{G'}(v)) = \frac 13$ and thus 
$D(v) \geq 0$. 
If $d_{G'}(v) \geq 3$ then $f_{B'}(d_{G'}(v)) = \frac 4{3(d_{G'}(v)+1)}$ and hence $D(v) \geq \frac 4{3(d_{G'}(v)+1)} - \frac 4{3(d_G(v)+1)} > 0$.

If $v \in C'$, then $\ell(v) = k$, and thus
\[D(v) = f_{C}(d_{G'}(v)) - \frac 2{d_G(v)+1}\frac 1{k+1} \geq f_{C}(d_{G'}(v)) - \frac 2{3(d_G(v)+1)} \geq f_{C}(d_{G'}(v)) - \frac 16.\]

If $d_G(v) \leq k+2$, then $d_{G'}(v) \leq 2$ and hence $f_{C}(d_{G'}(v)) \geq \frac 16$, so $D(v) \geq 0$. 
If $d_G(v) \geq k+3$, then $D(v) \geq f_{C}(d_{G'}(v)) - \frac 2{3(d_G(v)+1)} \geq \frac 2{3(d_{G'}(v)+1)} - \frac 2{3(d_G(v)+1)} > 0.$

Therefore, $D(v) \geq 0$ holds in all possible cases for vertex $v$, and the claim holds. 

Using our claim, we may lower bound the number of vertices in $F$ as follows
\[\abs {V(F)} = \abs L + \abs {V(F')} 
\geq \abs L + f(G') 
= h_{k,G}(G) + \sum_{v\in V(G')} D(v)
\geq h_{k,G}(G),\] 
and this concludes the proof. 
\end{proof}

\section{Proof of the ABC Lemma}
\label{sec:proof_of_ABC_Lemma}

In this section, we prove \Cref{lemma:A-B-C}. 
Recall that parts of the partition $(A, B, C)$ of $V(G)$ in the lemma are allowed to be empty. 

\begin{proof}[Proof of \Cref{lemma:A-B-C}]
Given an induced linear forest $F$ of a graph $G$, we say that $F$ {\em respects} a partition $(A, B, C)$ of $V(G)$ if $F$ satisfies the degree bounds in the lemma w.r.t.\ the partition $(A, B, C)$.  

The proof of the lemma is by contradiction. 
Suppose thus that the lemma is false, and let  $G$ be a graph and let $(A, B, C)$ be a partition of its vertex set that together form a counterexample to the lemma with a minimum number of vertices. 
Thus, $G$ does not contain an induced linear forest $F$ respecting $(A, B, C)$ that has at least $f(G, A, B, C)$ vertices. 
Clearly, the minimality of $G$ implies that $G$ is connected. 
Furthermore, the lemma is easily seen to hold for $|V(G)|=1,2$, so we also have $|V(G)|\geq 3$. 

Let us introduce the following notation: Given a nonnegative integer $i$, we let $A_i$  denote the set of vertices in $A$ having degree $i$ in $G$. 
The sets $B_i$ and $C_i$ are defined similarly with respect to $B$ and $C$. 

In the proof, we will move around vertices between the three sets $A, B, C$. 
We see these sets as `ranks', with $A$ being the highest rank and $C$ the lowest. 
\emph{Promoting} a vertex $v$ consists in moving $v$ from $B$ to $A$, or from $C$ to $B$. 
Likewise, \emph{demoting} $v$ consists in moving $v$ from $A$ to $B$, or from $B$ to $C$.  

Note that every vertex $v$ of $G$ has degree at least $1$. 
We define the \emph{gain} $\gamma(v)$ of a vertex $v$ to be the increase of $f(v;G)$ when $d(v)$ is decreased by one, that is, 
\[\gamma(v) \coloneqq \begin{cases}f_A(d(v)-1)-f_A(d(v)) &\text{ if } v \in A\\f_B(d(v)-1)-f_B(d(v)) &\text{ if } v \in B\\f_C(d(v)-1)-f_C(d(v)) &\text{ if } v \in C.\end{cases}\]
Observe that $\gamma(v) \geq 0$ always holds, since $f_A(d)$, $f_B(d)$, and $f_C(d)$ are nonincreasing functions. 
Given that the values of the gains for vertices with small degrees will be repeatedly used in the proofs, we summarize these values in \Cref{tab:gains} for reference. 

\begin{table}[ht!]
    \centering
    \begin{tabular}{c|c|c|c}
         & $v\in A$ & $v\in B$ & $v\in C$  \\
         \hline          
        $d = 1$ &  $\frac 16$ & $\frac 16$ & $\frac 56$ \\
        $d = 2$ &  $\frac 16$ & $\frac 12$ & $0$ \\
        $d = 3$ &  $\frac 16$ & $0$ & $0$ \\
        $d = 4$ &  $\frac {1}{10}$ & $\frac {1}{15}$ & $\frac {1}{30}$ \\
        $d \geq 5$ &  $\frac {2}{d(d+1)}$ & $\frac {4}{3d(d+1)}$ & $\frac {2}{3d(d+1)}$ 
    \end{tabular}

    $ $

    $ $
    
    \caption{Value of $\gamma(v)$ for a vertex $v$ of degree $d$.}
    \label{tab:gains}
\end{table}

Similarly, we define the \emph{loss} $\lambda(v)$ of vertex $v$ to be the decrease in $f(v;G)$ when $d(v)$ is increased by one, that is, 
\[\lambda(v) \coloneqq \begin{cases}f_A(d(v))-f_A(d(v)+1) &\text{ if } v \in A\\f_B(d(v))-f_B(d(v)+1) &\text{ if } v \in B\\f_C(d(v))-f_C(d(v)+1) &\text{ if } v \in C.\end{cases}\]
Here also, $\lambda(v) \geq 0$ always holds. 
\Cref{tab:losses} gives a summary of the possible values for $\lambda(v)$. 

\begin{table}[ht!]
    \centering
    \begin{tabular}{c|c|c|c}
         & $v\in A$ & $v\in B$ & $v\in C$  \\
         \hline                  
        $d = 1$ &  $\frac 16$ & $\frac 12$ & $0$ \\
        $d = 2$ &  $\frac 16$ & $0$ & $0$ \\
        $d = 3$ &  $\frac {1}{10}$ & $\frac {1}{15}$ & $\frac {1}{30}$ \\
        $d \geq 4$ &  $\frac {2}{(d+1)(d+2)}$ & $\frac {4}{3(d+1)(d+2)}$ & $\frac {2}{3(d+1)(d+2)}$ 
    \end{tabular}

    $ $

    $ $
    
    \caption{Value of $\lambda(v)$ for a vertex $v$ of degree $d$.}
    \label{tab:losses}
\end{table}

With these notations in hand, we may now turn to the proof, which is split into several claims. 
We remark that in the proofs we often consider other (smaller) graphs derived from $G$; let us emphasize that the gain $\gamma(v)$ and loss $\lambda(v)$ of a vertex $v$ must always be interpreted w.r.t.\ the original graph $G$ and partition $(A, B, C)$ (that is, as they are defined above). 

Here is a quick outline of the proof. 
Ideally, we would have liked to use the same approach as in the proof of \Cref{thm:proof_of_conj}. 
However, this is not possible here because of the existence of vertices $v$ not satisfying $f(v; G) = d(v)\gamma(v)$.  
Instead, we will show that there is a special vertex $v^*$ such that all its neighbors have gain at least that of $v^*$, except perhaps for one neighbor that has zero gain.  
We will then use this vertex $v^*$ to derive a contradiction regarding the minimality of our counterexample.
The proof is organized as follows:
\begin{itemize}
    \item \Cref{claim:ABC:weight is bigger than gain of neighbors} to \Cref{claim:ABC:no C_3--C_3 edge} establish general properties of the graph $G$,
    \item these claims are then used to define $v^*$,
    \item \Cref{claim:ABC:properties of v^*} to \Cref{claim:ABC:v^* has no neighbor in B3} show various properties of $v^*$, 
    \item a final contradiction is derived using all these properties. 
\end{itemize}

\begin{claim}\label{claim:ABC:weight is bigger than gain of neighbors}
$f(v;G) > \sum_{w \in N(v)}\gamma(w)$ holds for every vertex $v \in V(G)$.  In particular, $f(v;G) > \gamma(w)$ holds for every edge $vw \in E(G)$. 
\end{claim}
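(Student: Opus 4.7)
The plan is to exploit the minimality of the counterexample $G$ by deleting $v$. Let $G' \coloneqq G - v$, and let $(A', B', C')$ be the partition of $V(G')$ obtained by restricting $(A, B, C)$. Then $G'$ has fewer vertices than $G$, so by the minimality assumption, $G'$ contains an induced linear forest $F'$ respecting $(A', B', C')$ with $|V(F')| \geq f(G', A', B', C')$.

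The next step is to compare $f(G', A', B', C')$ with $f(G, A, B, C)$. Removing $v$ eliminates the contribution $f(v;G)$ and, for each neighbor $w \in N(v)$, decreases $d_{G'}(w) = d_G(w) - 1$, which by definition increases the contribution of $w$ by exactly $\gamma(w)$. No other vertex's contribution changes, and the ``rank'' (membership in $A$, $B$, or $C$) of each vertex in $V(G')$ is unchanged, so the gain of each $w$ must be computed with respect to its original rank, which matches the definition of $\gamma(w)$ given in the setup. Hence
\[
f(G', A', B', C') = f(G, A, B, C) - f(v; G) + \sum_{w \in N(v)} \gamma(w).
\]

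Now the contradiction: $F'$ is an induced subgraph of $G'$, and since $V(F') \subseteq V(G) \setminus \{v\}$ and the partition of $V(G')$ agrees with $(A, B, C)$, the forest $F'$ is also an induced linear forest of $G$ respecting $(A, B, C)$. If $f(v; G) \leq \sum_{w \in N(v)} \gamma(w)$, then
\[
|V(F')| \geq f(G', A', B', C') \geq f(G, A, B, C),
\]
contradicting the assumption that $G$ (with partition $(A,B,C)$) is a counterexample. Therefore $f(v; G) > \sum_{w \in N(v)} \gamma(w)$, establishing the first inequality. The second inequality $f(v; G) > \gamma(w)$ for each edge $vw$ follows immediately because $\gamma(w') \geq 0$ for every $w' \in N(v)$, as noted right after the definitions of the gain.

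The main (and essentially only) obstacle is purely bookkeeping: verifying that the restricted partition $(A', B', C')$ makes $G'$ a strictly smaller valid instance, and that the degree-based contributions decompose cleanly as asserted, i.e.\ that the only vertices whose contribution changes when $v$ is deleted are $v$ itself and its neighbors. Once this is in place, the argument is a one-line application of minimality.
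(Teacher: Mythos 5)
Your proposal is correct and follows essentially the same route as the paper: delete $v$, invoke minimality to get a large forest $F'$ in $G' = G - v$ respecting the restricted partition, observe that $F'$ also works in $G$, and expand $f(G', A', B', C') = f(G, A, B, C) - f(v;G) + \sum_{w \in N(v)}\gamma(w)$ to force the strict inequality. The bookkeeping you flag (only $v$ and its neighbors change contribution, gains computed w.r.t.\ the original ranks) is exactly how the paper handles it, so there is nothing to add.
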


\begin{claimproof}
Let $v \in V(G)$. 
The fact that $G$ and $(A, B, C)$ form a minimum counterexample implies that $f(G, A, B, C) > f(G', A', B', C')$, where $G'\coloneqq G- v$ and $(A', B', C')$ is the restriction of $(A, B, C)$ to $V(G')$.  
In particular, 
\[f(G, A, B, C) > f(G', A', B', C') = f(G, A, B, C) - f(v;G) + \sum_{w \in N(v)}\gamma(w).\]
Adding $f(v;G)-f(G, A, B, C)$ on both sides yields the required inequality.
\end{claimproof}

We use the notation $N[S]$ for the set of vertices at distance at most $1$ from a given set $S$ of vertices in $G$, and $N^i(S)$ for the set of vertices at distance exactly $i$ from $S$.

\begin{claim}\label{claim:ABC:ILF size is bouded by sum of weights}
Let $F$ be an induced linear forest in $G$ respecting $(A, B, C)$ with $\abs {V(F)}\geq 1$.  
Let $G' \coloneqq G-N[V(F)]$ and let $(A', B', C')$ be the restriction of $(A, B, C)$ to $V(G')$. 
Then
\begin{align*}
\abs {V(F)} &< f(G, A, B, C) - f(G', A', B', C') \\
&\leq \sum_{v \in N[V(F)]}f(v;G) - \sum_{w \in N^2(V(F))}\gamma(w) \\
&\leq \sum_{v \in N[V(F)]}f(v;G).
\end{align*}
\end{claim}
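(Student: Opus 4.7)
The plan is to establish the three inequalities in order, with only the first relying on the minimality of the counterexample $(G, (A, B, C))$. For the leftmost strict inequality, observe that since $|V(F)| \geq 1$, the graph $G'$ has strictly fewer vertices than $G$. By minimality, the lemma then holds for $(G', (A', B', C'))$, giving an induced linear forest $F'$ in $G'$ respecting $(A', B', C')$ with $|V(F')| \geq f(G', A', B', C')$. The crucial observation is that $V(G') \cap N[V(F)] = \emptyset$, so no edge of $G$ joins $V(F)$ to $V(F')$; hence $G[V(F) \cup V(F')]$ is the disjoint union of $F$ and $F'$, and is therefore itself an induced linear forest of $G$ respecting $(A, B, C)$. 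Because $G$ is a counterexample, this forest must have fewer than $f(G, A, B, C)$ vertices, yielding
\[
|V(F)| + f(G', A', B', C') \leq |V(F)| + |V(F')| < f(G, A, B, C),
\]
which rearranges to the desired strict inequality.

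For the middle inequality, I will decompose $f(G, A, B, C) - f(G', A', B', C')$ according to each vertex of $G$. Vertices in $N[V(F)]$ appear only in the $G$-term and contribute exactly $\sum_{v \in N[V(F)]} f(v;G)$. For a vertex $v \in V(G')$, the number of neighbors of $v$ removed when passing from $G$ to $G'$ equals $|N_G(v) \cap N[V(F)]|$, which is at least $1$ precisely when $v \in N^2(V(F))$ and equals $0$ when $v$ is at distance at least $3$ from $V(F)$. Since $f_A, f_B, f_C$ are nonincreasing, $f(v;G') \geq f(v;G)$ for every $v \in V(G')$, and when $v \in N^2(V(F))$ the definition of $\gamma$ gives $f(v;G') - f(v;G) \geq \gamma(v)$. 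Summing these contributions with the correct signs yields the middle inequality.

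The rightmost inequality is immediate from $\gamma(w) \geq 0$. The only step requiring care is the disjoint-union argument in the first inequality, where the equality $V(G') \cap N[V(F)] = \emptyset$ ensures no edge of $G$ runs between the two forests; beyond that bookkeeping point, I do not foresee any serious obstacle.
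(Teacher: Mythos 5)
Your proposal is correct and follows essentially the same route as the paper: use minimality of the counterexample to get a forest $F'$ in $G'$, note that $F\cup F'$ is an induced linear forest respecting $(A,B,C)$ because $G'$ avoids $N[V(F)]$, and then bound $f(G,A,B,C)-f(G',A',B',C')$ vertex by vertex using that $f_A,f_B,f_C$ are nonincreasing, that vertices of $N^2(V(F))$ lose at least one neighbor (giving the $\gamma$ terms), and that $\gamma\geq 0$. The only point the paper makes explicit that you gloss over is the degenerate case $V(G')=\varnothing$, which is harmless.
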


\begin{claimproof}
By the minimality of our counterexample, we know that $G'$ contains an induced linear forest $F'$ with at least $f(G', A', B', C')$ vertices. 
(Note that possibly $V(G') = \varnothing$, in which case $V(F') = \varnothing$ and $f(G', A', B', C')=0$.) 
Since $F\cup F'$ is an induced linear forest in $G$ respecting $(A, B, C)$, it follows that 
\[f(G, A, B, C) > \abs {V(F \cup F')} 
= \abs {V(F)} + \abs {V(F')} 
\geq \abs {V(F)} + f(G', A', B', C').\]
Subtracting $f(G', A', B', C')$ from both sides yields the first inequality of the claim. 

The second inequality of the claim follows from the fact that the three functions $f_A(d)$, $f_B(d)$, and $f_C(d)$ are non increasing and that $d_{G'}(v) \leq d_{G}(v)-1$ holds for every vertex $v\in N^2_G(V(F))$, while $d_{G'}(v) = d_{G}(v)$ for every $v\in V(G')-N^2_G(V(F))$. 

The last inequality follows from the fact that $\gamma(w) \geq 0$ holds for every vertex $w$ of $G$. 
\end{claimproof}

\begin{claim}\label{claim:ABC:(1-weight) is bigger than weight of neighbors}
Let $v \in V(G)$. Then $1-f(v;G) < \sum_{w \in N(v)}f(w;G)$, and thus in particular $\max_{w \in N(v)}f(w;G) > \frac {1-f(v;G)}{d(v)}$.
\end{claim}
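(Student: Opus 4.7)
The plan is to apply the preceding claim (\Cref{claim:ABC:ILF size is bouded by sum of weights}) to the trivial single-vertex induced linear forest $F \coloneqq G[\{v\}]$. Note that $F$ vacuously respects the partition $(A, B, C)$ since it has no edges, so the degree constraints hold regardless of which of $A$, $B$, or $C$ contains $v$. Moreover $|V(F)| = 1 \geq 1$, so \Cref{claim:ABC:ILF size is bouded by sum of weights} applies.

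Applying that claim directly yields
\[
1 = |V(F)| \;<\; \sum_{w \in N[V(F)]} f(w; G) \;=\; f(v; G) + \sum_{w \in N(v)} f(w; G),
\]
where the equality on the right comes from $N[V(F)] = \{v\} \cup N(v)$. Subtracting $f(v; G)$ from both sides gives the first desired inequality $1 - f(v; G) < \sum_{w \in N(v)} f(w; G)$.

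For the ``in particular'' part, recall that every vertex of $G$ has degree at least $1$, so $d(v) \geq 1$ and we can divide. By an averaging argument,
\[
\max_{w \in N(v)} f(w; G) \;\geq\; \frac{1}{d(v)} \sum_{w \in N(v)} f(w; G) \;>\; \frac{1 - f(v; G)}{d(v)},
\]
which is the second inequality of the claim.

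There is essentially no obstacle here: the content of the claim is an immediate specialization of \Cref{claim:ABC:ILF size is bouded by sum of weights} to a single-vertex forest, combined with a trivial averaging. The only point requiring a brief check is that $\{v\}$ is a valid induced linear forest respecting $(A, B, C)$ regardless of which part contains $v$, which is immediate since it has no edges.
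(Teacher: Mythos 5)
Your proof is correct and is exactly the paper's argument: the paper proves this claim by applying \Cref{claim:ABC:ILF size is bouded by sum of weights} with $F = G[\{v\}]$, and the averaging step for the ``in particular'' part is the same routine observation (with $d(v) \geq 1$ guaranteed since $G$ is connected on at least three vertices). The only addition in your write-up is the explicit check that the one-vertex forest respects $(A,B,C)$, which the paper leaves implicit.
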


\begin{claimproof}
Immediate by \Cref{claim:ABC:ILF size is bouded by sum of weights} with $F = G[\{v\}]$.
\end{claimproof}

\begin{claim}\label{claim:ABC:demoting and lowering degree costs at most 1/6}
Let $v \in V(G)$. Then decreasing $d(v)$ by one and then demoting $v$ lowers $f(v;G)$ by at most $\frac 16$.
\end{claim}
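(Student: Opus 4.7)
The plan is to verify this by a straightforward case analysis on the rank of $v$ and its degree $d \coloneqq d(v)$. If $v \in C$, there is nothing below to demote into, so the statement is either vacuous or (treating demotion as a no-op) reduces to the assertion that $f_C(d-1) - f_C(d) \geq 0$, which follows from $f_C$ being non-increasing. The two substantive cases are therefore $v \in A$, in which the change in $f(v;G)$ equals $f_A(d) - f_B(d-1)$, and $v \in B$, in which it equals $f_B(d) - f_C(d-1)$.

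For the small degrees $d \in \{1, 2, 3\}$ I would read the values directly from the piecewise definitions in \Cref{lemma:A-B-C}. In fact, the values of $f_A, f_B, f_C$ at these degrees were evidently chosen to make exactly this inequality hold with equality in the tight cases: for $v \in A$ one gets $f_A(1) - f_B(0) = -\tfrac{1}{6}$, $f_A(2) - f_B(1) = -\tfrac{1}{6}$, and $f_A(3) - f_B(2) = \tfrac{1}{2} - \tfrac{1}{3} = \tfrac{1}{6}$; for $v \in B$ one gets $f_B(1) - f_C(0) = -\tfrac{1}{6}$, $f_B(2) - f_C(1) = \tfrac{1}{3} - \tfrac{1}{6} = \tfrac{1}{6}$, and $f_B(3) - f_C(2) = \tfrac{1}{3} - \tfrac{1}{6} = \tfrac{1}{6}$.

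For $d \geq 4$, both quantities fall into the $\tfrac{2}{\cdot}$ and $\tfrac{4}{3\cdot}$, $\tfrac{2}{3\cdot}$ branches of the definitions, and a short algebraic simplification yields
\[
f_A(d) - f_B(d-1) = \frac{2}{d+1} - \frac{4}{3d} = \frac{2d-4}{3d(d+1)}, \qquad f_B(d) - f_C(d-1) = \frac{4}{3(d+1)} - \frac{2}{3d} = \frac{2d-2}{3d(d+1)}.
\]
Both expressions are easily bounded above by $\tfrac{2}{3(d+1)} \leq \tfrac{2}{15}$ for $d \geq 4$, which is well below $\tfrac{1}{6}$.

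There is no real obstacle: the assertion is a purely numerical check, and indeed the functions $f_A$, $f_B$, $f_C$ are designed precisely so that consecutive-rank gaps at a shared degree are at most $\tfrac{1}{6}$, with the small-$d$ equalities being the extremal cases. The only mild care needed is to respect the branch boundaries of the piecewise definitions at $d = 2$ and $d = 3$ when taking the value of $f_B$ or $f_C$ at $d-1$.
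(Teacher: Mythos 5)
Your proposal is correct and follows essentially the same route as the paper: the paper also reduces the claim to checking $f_A(d)-f_B(d-1)\le \frac16$ and $f_B(d)-f_C(d-1)\le \frac16$ for all $d\ge 1$, verifies the small degrees $d\le 3$ directly (with the same values, including the tight cases equal to $\frac16$), and for $d\ge 4$ computes the same expressions $\frac{2(d-2)}{3d(d+1)}$ and $\frac{2(d-1)}{3d(d+1)}$ and bounds them below $\frac16$; your extra remark that the $v\in C$ case is vacuous/trivial is harmless, as the claim is only ever invoked for vertices in $A\cup B$.
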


\begin{claimproof}
For $d \geq 1$, define $D_A(d) \coloneqq f_A(d)-f_B(d-1)$ 
and $D_B(d) \coloneqq f_B(d)-f_C(d-1)$. 
Proving the claim amounts to showing that $D_A(d) \leq \frac 16$ and $D_B(d) \leq \frac 16$ for every $d \geq 1$.  

We have $D_A(1) = D_A(2) = -\frac 16 < 0$, $D_A(3) = \frac 16$ and $D_A(d) = \frac {2(d-2)}{3d(d+1)} < \frac 2{3d} \leq \frac 16$ for $d \geq 4$. 
Thus, $D_A(d) \leq \frac 16$ for every $d \geq 1$. 

We have $D_B(1) = -\frac 16 < 0$, $D_B(2)=D_B(3) = \frac 16$ and $D_B(d) = \frac {2(d-1)}{3d(d+1)} < \frac 2{3d} \leq \frac 16$ for $d \geq 4$. 
Thus, $D_B(d) \leq \frac 16$ for every $d \geq 1$. 
\end{claimproof}

Let $\delta(G)$ denote the minimum degree of a vertex in $G$. 

\begin{claim}\label{claim:no leaf in G}
$\delta(G) \geq 2$.
\end{claim}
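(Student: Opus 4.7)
The plan is to argue by contradiction: I will assume that some vertex $v \in V(G)$ has $d_G(v) = 1$ with unique neighbor $w$ and derive a contradiction. Since $G$ is connected and $|V(G)| \ge 3$, we have $d_G(w) \ge 2$, so in particular $f(w;G) < \tfrac 56$ (the largest value attained by any $f_X(d)$ with $d \ge 1$ is $f_A(1) = f_B(1) = \tfrac 56$, but this value requires $d(w) = 1$).

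First, I would rule out $v \in C$ by applying \Cref{claim:ABC:(1-weight) is bigger than weight of neighbors} to $v$: it yields $f(w;G) > 1 - f(v;G)$, which forces $f(w;G) > \tfrac 56$ when $v \in C$ (since $f_C(1) = \tfrac 16$), contradicting the bound above. Next I would rule out $w \in C$ by applying \Cref{claim:ABC:weight is bigger than gain of neighbors} to $w$: this gives $f(w;G) > \sum_{u \in N(w)} \gamma(u) \ge \gamma(v) = \tfrac 16$ (using $v \in A \cup B$ and $d(v)=1$); but $w \in C$ with $d(w) \ge 2$ forces $f(w;G) \le f_C(2) = \tfrac 16$, a contradiction. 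Hence both $v, w \in A \cup B$, so $f(v;G) = \tfrac 56$ and $\gamma(v) = \tfrac 16$.

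Next, I would use the minimality of $G$ applied to the strictly smaller graph $G' \coloneqq G - v$: there exists an induced linear forest $F^*$ of $G'$ respecting the restricted partition $(A', B', C')$ with $|V(F^*)| \ge f(G', A', B', C') = f(G, A, B, C) - \tfrac 56 + \gamma(w)$. If $w \notin V(F^*)$, then $F^* \cup \{v\}$ with $v$ isolated is a valid induced linear forest of $G$ respecting $(A,B,C)$, of size $|V(F^*)| + 1 \ge f(G,A,B,C) + \tfrac 16 + \gamma(w) > f(G,A,B,C)$, contradicting the minimality of $G$. Similarly, if $w \in V(F^*)$ but $d_{F^*}(w)$ is strictly less than the maximum allowed by $w$'s rank (namely $2$ if $w \in A$ and $1$ if $w \in B$), then extending $F^*$ by $v$ via the edge $vw$ yields the same contradiction. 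So we must have $d_{F^*}(w)$ attaining this maximum, which in turn forces $d_G(w) \ge 3$ if $w \in A$ and $d_G(w) \ge 2$ if $w \in B$.

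The main obstacle is the final step: deriving a contradiction when $w$ is saturated in every such $F^*$. My plan is to consider a neighbor $y$ of $w$ in $G$ lying outside $V(F^*)$ (such a $y$ exists whenever $d_G(w) - 1 > d_{F^*}(w)$) and show that the modified set $(F^* - w) \cup \{v, y\}$ induces a valid linear forest of $G$ respecting $(A,B,C)$, of size $|V(F^*)| + 1 > f(G, A, B, C)$, yielding the sought contradiction. The existence of a suitable $y$ and the verification that its addition respects the partition constraints will require a careful case analysis combining \Cref{claim:ABC:weight is bigger than gain of neighbors} applied to the vertices in $N(w) \setminus \{v\}$ with the maximality of $F^*$ in $G-v$; the boundary subcases $d_G(w) = 3$ (for $w \in A$) and $d_G(w) = 2$ (for $w \in B$), in which $V(F^*)$ must cover all of $V(G) \setminus \{v\}$, have to be handled separately, where the bound $f(G,A,B,C) \le \tfrac{5}{6}|V(G)|$ together with the inequality $|V(F^*)| < f(G,A,B,C)$ restricts $|V(G)| \le 5$ and enables a finite case check.
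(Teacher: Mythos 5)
Your opening steps are fine: ruling out $v \in C$ via \Cref{claim:ABC:(1-weight) is bigger than weight of neighbors}, ruling out $w \in C$ via \Cref{claim:ABC:weight is bigger than gain of neighbors}, and the easy cases where $w \notin V(F^*)$ or where $w$ is unsaturated in $F^*$ all check out. The problem is the remaining case, which is exactly where the difficulty lives, and your plan for it does not go through. When $w$ is saturated in every large forest $F^*$ of $G-v$, you propose the swap $(V(F^*)\setminus\{w\}) \cup \{v,y\}$ for a neighbor $y$ of $w$ outside $V(F^*)$. But nothing controls how $y$ attaches to the rest of $F^*$: $y$ may have several neighbors in $V(F^*)\setminus\{w\}$ (creating degree-$3$ vertices or cycles), and $y$ may lie in $B$ or $C$, where the partition constraints forbid such an attachment. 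The induction hypothesis gives you no structural control over $F^*$ beyond its size, so ``the existence of a suitable $y$'' is precisely the missing argument, not a routine verification. Moreover, your treatment of the boundary subcases rests on a false premise: from $d_G(w)-1 = d_{F^*}(w)$ you only get that the $G'$-neighbors of $w$ lie in $V(F^*)$, not that $V(F^*) = V(G)\setminus\{v\}$, so the claimed reduction to $|V(G)| \leq 5$ collapses.

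The paper sidesteps this entire case analysis with one move you did not use: before applying induction to $G' = G-v$, it \emph{demotes} $w$ (from $A$ to $B$, or from $B$ to $C$) in the partition handed to $G'$. This is legitimate because $w \notin C$, and by \Cref{claim:ABC:demoting and lowering degree costs at most 1/6} the combined cost of deleting the edge $vw$ and demoting $w$ is at most $\frac 16$, so $f(G',A',B',C') \geq f(G,A,B,C) - f(v;G) - \frac 16 \geq f(G,A,B,C) - 1$. The forest $F'$ returned by induction then satisfies a degree bound at $w$ that is one lower than what the original partition allows, so attaching the pendant vertex $v$ is \emph{always} valid --- no case distinction on whether or how $w$ sits in $F'$ is needed --- and $G[V(F')\cup\{v\}]$ already has at least $f(G,A,B,C)$ vertices. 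If you want to salvage your approach, you would essentially have to reprove this demotion mechanism inside your saturated case; as written, the proposal has a genuine gap there.
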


\begin{claimproof}
We already know that $\delta(G)\geq 1$ since $G$ is connected and has at least three vertices. 

Let $vw \in E(G)$. 
Using \Cref{claim:ABC:weight is bigger than gain of neighbors} and the fact that $d(v) \geq 1$, we deduce that 
$\gamma(w) < f(v;G) \leq \frac 56$. 
This implies in particular that $C_1 = \varnothing$. (This is because it can be checked from the definition of $\gamma(w)$ that $\gamma(w) \leq \frac 56$, with equality if and only if $w\in C_1$.) 

Now, suppose that there is some vertex $v$ with degree $1$ in $G$.  Then $v \in A \cup B$ (as discussed above), and thus $f(v; G) = \frac 56$.
Let $w$ be the unique neighbor of $v$. \Cref{claim:ABC:weight is bigger than gain of neighbors} ensures that $f(w; G) > \gamma(v) = \frac 16$, thus $w \notin C$.
Let $G' \coloneqq G-v$, and let $(A', B', C')$ be the partition of $V(G')$ obtained by restricting $(A, B, C)$ to $V(G')$ and demoting $w$. 
It follows from \Cref{claim:ABC:demoting and lowering degree costs at most 1/6} that 
\begin{align*}
f(G', A', B', C') &= f(G, A, B, C) - f(v;G) - (f(w;G) - f(w;G')) \\ 
&\geq f(G, A, B, C) - f(v;G) - \frac 16 \\
&\geq f(G, A, B, C)-1.
\end{align*}
By the minimality of our counterexample, we know that $G'$ contains an induced linear forest $F'$ respecting $(A', B', C')$ with at least $f(G', A', B', C')$ vertices. 
Then, $F\coloneqq G[V(F') \cup \{v\}]$ is an induced linear forest of $G$ respecting $(A, B, C)$ and having at least $\abs {V(F')} + 1\geq f(G', A', B', C') +1 \geq f(G, A, B, C)$ vertices, a contradiction.

Therefore, $\delta(G) \geq 2$, as claimed.
\end{claimproof}

\begin{claim}\label{claim:ABC:no B_2 and no C_2}
$B_2 = C_2 = \varnothing$. 
\end{claim}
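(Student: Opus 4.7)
The plan is to handle $C_2=\varnothing$ and $B_2=\varnothing$ separately, assuming for contradiction the existence of $v$ of the relevant type with neighbors $w_1,w_2$. Note that $d(w_1),d(w_2)\geq 2$ by \Cref{claim:no leaf in G}, so $w_i$ cannot be a degree-$1$ vertex.

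For $v\in C_2$, I would first apply \Cref{claim:ABC:weight is bigger than gain of neighbors} at $v$: since $f(v;G)=f_C(2)=\tfrac16$, we get $\gamma(w_1)+\gamma(w_2)<\tfrac16$, hence each $\gamma(w_i)<\tfrac16$. Combined with $d(w_i)\geq 2$ and reading off \Cref{tab:gains}, this forces one of three possibilities: $w_i\in A$ with $d(w_i)\geq 4$, $w_i\in B$ with $d(w_i)\geq 3$, or $w_i\in C$ (any $d\geq 2$). In each case $f(w_i;G)$ is respectively at most $\tfrac25$, $\tfrac13$, or $\tfrac16$, so in all cases $f(w_i;G)\leq \tfrac25$. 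Therefore $f(w_1;G)+f(w_2;G)\leq \tfrac45<\tfrac56$, contradicting \Cref{claim:ABC:(1-weight) is bigger than weight of neighbors} at $v$, which states that $f(w_1;G)+f(w_2;G)>1-f(v;G)=\tfrac56$.

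For $v\in B_2$ the key is to flip the center of application. Here $\gamma(v)=f_B(1)-f_B(2)=\tfrac12$. Applying \Cref{claim:ABC:weight is bigger than gain of neighbors} with $w_i$ as center and using that $v\in N(w_i)$ together with nonnegativity of gains yields $f(w_i;G)>\gamma(v)=\tfrac12$ for each $i$. Inspecting $f_A,f_B,f_C$ at degrees $d\geq 2$, the only value strictly exceeding $\tfrac12$ is $f_A(2)=\tfrac23$; thus both $w_1,w_2$ must lie in $A_2$. Consequently $\gamma(w_i)=f_A(1)-f_A(2)=\tfrac16$, and $\gamma(w_1)+\gamma(w_2)=\tfrac13$. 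But now applying \Cref{claim:ABC:weight is bigger than gain of neighbors} back at $v$ gives $\tfrac13=f(v;G)>\gamma(w_1)+\gamma(w_2)=\tfrac13$, a contradiction.

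The $C_2$ case reduces to a numerical check once \Cref{tab:gains} and the definitions of $f_A,f_B,f_C$ are consulted. The $B_2$ case is the only delicate one: applying \Cref{claim:ABC:weight is bigger than gain of neighbors} at $v$ alone only gives $\gamma(w_i)<\tfrac13$, which admits too many configurations to yield a direct bound via \Cref{claim:ABC:(1-weight) is bigger than weight of neighbors}. The trick is to pivot by applying the same claim at $w_i$ to force the rigid structure $w_i\in A_2$, then return to $v$ to close the argument with an equality-versus-strict-inequality contradiction.
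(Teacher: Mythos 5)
Your proposal is correct and follows essentially the same route as the paper: the $B_2$ case is identical (pivot the gain-vs-weight claim to the neighbors to force them into $A_2$, then contradict at $v$), and the $C_2$ case uses the same two claims (\Cref{claim:ABC:weight is bigger than gain of neighbors} and \Cref{claim:ABC:(1-weight) is bigger than weight of neighbors}), merely in the opposite order from the paper, which changes nothing substantive. All the numerical checks ($\gamma(w_i)<\tfrac16$ forcing $f(w_i;G)\leq\tfrac25$, and $\tfrac45<\tfrac56$) are accurate.
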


\begin{claimproof}
Suppose that $v$ is a degree-$2$ vertex of $G$. 
Denote by $u$ and $w$ its two neighbors, in such a way that $f(u;G) \geq f(w;G)$.

If $v \in B_2$, then by \Cref{claim:ABC:weight is bigger than gain of neighbors}, we know in particular $f(u;G) \geq f(w;G) > \gamma(v) = \frac 12$. 
Since $\delta(G) \geq 2$, it follows that $u, w \in A_2$, and thus $\gamma(u)=\gamma(w) = \frac 16$. 
But then \Cref{claim:ABC:weight is bigger than gain of neighbors} also states that $\frac 13 = f(v;G) > \gamma(u)+\gamma(w) = \frac 13$, 
which is a contradiction.

If $v \in C_2$, then by \Cref{claim:ABC:(1-weight) is bigger than weight of neighbors} we know that $f(u;G)+f(w;G) > 1-f(v;G) = \frac 56$, which implies that $f(u;G) > \frac 5{12}$, which implies in turn that $u \in A_2 \cup A_3$ and $\gamma(u) = \frac 16$. However, it follows then from \Cref{claim:ABC:weight is bigger than gain of neighbors} that $\frac 16 =  f(v;G) > \gamma(u)  = \frac 16$, a contradiction.

We deduce that $v$ is neither in $B_2$ nor in $C_2$ (and thus is in $A_2$), as claimed. 
\end{claimproof}

Given two sets $X$ and $Y$ of vertices of $G$, we call  an edge of $G$ with one end in $X$ and the other end in $Y$ an {\em $X$--$Y$ edge}. 

\begin{claim}\label{claim:ABC:no A_2-B_3 edge}
$G$ contains no $A_2$--$B_3$ edge.
\end{claim}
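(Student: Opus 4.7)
The plan is to argue by contradiction: suppose there exists an $A_2$--$B_3$ edge $vw$ in $G$, with $v\in A_2$ and $w\in B_3$. Let $u$ be the other neighbor of $v$, and let $x, y$ be the other two neighbors of $w$. Applying \Cref{claim:ABC:weight is bigger than gain of neighbors} to $w$ gives $\tfrac{1}{3}=f(w;G)>\gamma(v)+\gamma(x)+\gamma(y)=\tfrac{1}{6}+\gamma(x)+\gamma(y)$, so $\gamma(x)+\gamma(y)<\tfrac{1}{6}$. Combined with \Cref{claim:no leaf in G}, \Cref{claim:ABC:no B_2 and no C_2} and \Cref{tab:gains}, this rules out $x, y\in A_2\cup A_3$, hence $f(x;G),f(y;G)\leq f_A(4)=\tfrac{2}{5}$. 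The same claim applied to $v$ yields $\gamma(u)<\tfrac{2}{3}$ (using $\gamma(w)=0$), and in particular $\gamma(u)+\gamma(x)+\gamma(y)<\tfrac{5}{6}$.

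Next, I would observe that $F_0:=G[\{v,w\}]$ is a respecting induced linear forest, since $v$ has forest-degree $1\leq 2$ ($v\in A$) and $w$ has forest-degree $1$ ($w\in B$). Invoking \Cref{claim:ABC:ILF size is bouded by sum of weights} on $F_0$ gives, in the generic case $u\notin\{x,y\}$, the inequality $f(u;G)+f(x;G)+f(y;G)>1$ (the triangle case is analogous and yields an even tighter bound). Combined with $f(x;G)+f(y;G)\leq\tfrac{4}{5}$, this forces $f(u;G)>\tfrac{1}{5}$, which already rules out $u\in C$: indeed $f_C(d)\leq\tfrac{1}{6}$ for all $d\geq 1$ by definition (using $C_1=C_2=\varnothing$).

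The main reduction would then be to set $G'':=G-\{v,w\}$ with the inherited partition $(A'',B'',C'')$. A direct computation yields
\[
f(G'',A'',B'',C'')=f(G,A,B,C)-1+\gamma(u)+\gamma(x)+\gamma(y).
\]
By minimality of the counterexample, $G''$ contains a respecting induced linear forest $F''$ with $\abs{V(F'')}\geq f(G'',A'',B'',C'')$. Setting $F:=G[V(F'')\cup\{v,w\}]$, if $F$ respects $(A,B,C)$, then
\[
\abs{V(F)}=\abs{V(F'')}+2\geq f(G,A,B,C)+1+\gamma(u)+\gamma(x)+\gamma(y)>f(G,A,B,C),
\]
contradicting the choice of $(G,A,B,C)$. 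Thus $F$ must fail to respect $(A,B,C)$, meaning either $x$ or $y$ lies in $V(F'')$ (forcing $w$ to forest-degree $\geq 2$) or $u\in V(F'')$ is saturated in its class.

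The hard part will be resolving these obstructions by local surgery on $F''$: delete the offending vertex (or vertices) and then append $\{v,w\}$ with the edge $vw$. The slack budget $\gamma(u)+\gamma(x)+\gamma(y)<\tfrac{5}{6}$, together with the sharper $\gamma(x)+\gamma(y)<\tfrac{1}{6}$, gives room to remove at most one vertex while still netting a positive size change against the two new vertices $v,w$. The most delicate subcases are when multiple obstructions coexist (for instance $u\in V(F'')$ saturated and simultaneously $x\in V(F'')$), when $u\in N(w)$ so that $u\in\{x,y\}$ (triangle configuration), or when $u\in B$ is saturated with forest-degree $1$; these may require reapplying \Cref{claim:ABC:ILF size is bouded by sum of weights} to an auxiliary respecting linear forest, for example the path $u-v-w$ when $u\notin C$ and $u\notin N(w)$, to squeeze a contradiction directly from the weight budget rather than via explicit surgery.
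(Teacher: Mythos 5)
Your opening deductions are fine: $\gamma(x)+\gamma(y)<\tfrac16$ forces $x,y\notin A_2\cup A_3$, hence $f(x;G),f(y;G)\le\tfrac25$, and the weight computation for $G''=G-\{v,w\}$ is correct in the generic case. But from ``The hard part'' onwards you have a plan, not a proof, and as budgeted the plan cannot work. In your reduction you can afford to delete at most \emph{one} vertex of $F''$ before appending $v,w$, yet the obstructions can force two deletions: if both $x$ and $y$ lie in $V(F'')$, then since $w\in B$ must have forest-degree at most $1$ and is already adjacent to $v$ in $F$, both $x$ and $y$ must go; likewise one of $x,y$ in $V(F'')$ together with a saturated $u$ forces two removals. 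By \Cref{claim:no leaf in G} and \Cref{claim:ABC:no B_2 and no C_2} every vertex has degree at least $2$ and $B_2=C_2=\varnothing$, so every gain is at most $\tfrac16$ and $\gamma(u)+\gamma(x)+\gamma(y)<\tfrac13$; after two deletions you only reach $f(G,A,B,C)-1+\gamma(u)+\gamma(x)+\gamma(y)<f(G,A,B,C)$, which is no contradiction. The fallback you mention, applying \Cref{claim:ABC:ILF size is bouded by sum of weights} to the path $uvw$, does not close these cases either: the closed neighborhood then also contains the $d(u)-1$ remaining neighbors of $u$, each contributing up to $\tfrac23$, so already for $d(u)=2$ the right-hand side can exceed $3$ and no contradiction follows. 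The triangle case $u\in\{x,y\}$ is likewise left unargued (and there your exact formula for $f(G'')$ no longer holds, since $u$ loses two degrees).

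The paper sidesteps the surgery entirely by choosing the deletion set so that the re-inserted vertex is automatically compliant: it deletes the $B_3$ vertex $w$ \emph{together with its two non-$A_2$ neighbors} $x,y$ from the graph (not from the forest), applies minimality to $G-\{w,x,y\}$, and adds only $w$ back. Since $x,y$ are absent, $w$ can only be adjacent to $v$ in the new forest, so $d_F(w)\le 1$ holds for free, and the count $1-f_B(3)-f(x;G)-f(y;G)+\gamma(v)\ge 1-\tfrac13-\tfrac25-\tfrac25+\tfrac16=\tfrac1{30}>0$ yields the contradiction. If you want to salvage your route, you need a move of this kind — pay for $x$ and $y$ up front in the induction rather than trying to excise them from $F''$ afterwards.
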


\begin{claimproof}
Suppose that $v$ is a vertex in $B_3$. 
It follows from \Cref{claim:ABC:weight is bigger than gain of neighbors} that $\abs {N(v) \cap (A_2 \cup A_3)} \leq 1$. Thus, denoting $x, y, z$ the three neighbors of $v$ in such a way that $f(x;G) \geq f(y;G) \geq f(z;G)$, it follows that at most one of them is in $A_2$, and if there is one, it must be $x$. 
Arguing by contradiction, let us suppose that $x \in A_2$. 
Then $f(y;G) \leq \frac 25$ and $f(z;G) \leq \frac 25$. 

Let $G' \coloneqq G-\{v, y, z\}$.
By the minimality of our counterexample, there is an induced linear forest $F'$ in $G'$ of size at least $f(G', A', B', C')$ respecting the partition $(A', B', C')$ obtained by restricting $(A, B, C)$ to $V(G')$. 
Now, observe that $F \coloneqq G[V(F') \cup \{v\}]$ is an induced linear forest in $G$ respecting $(A, B, C)$ whose number of vertices is at least
\begin{align*}
f(G', A', B', C') + 1 
&\geq f(G, A, B, C) + 1 - f(v;G) - f(y;G) - f(z;G) + \gamma(x) \\
&\geq f(G, A, B, C) + \frac 1{30} \\
&\geq f(G, A, B, C),
\end{align*} 
which contradicts the fact that $G$ and $(A, B, C)$ form a counterexample.
\end{claimproof}

\begin{claim}\label{claim:ABC:loss in neighbourhood of B_3 is at most 1/6}
Let $v \in B_3$, and let $x, y$ be two distinct neighbors of $v$ such that $f(x; G) \geq f(y; G)$.
Then $\lambda(x) \leq \frac 1{10}$ and $\lambda(y) \leq \frac 1{15}$.
In particular $\lambda(x)+\lambda(y) \leq \frac 16$.
\end{claim}

\begin{claimproof}
\Cref{claim:ABC:no A_2-B_3 edge} implies that $x\notin A_2$, thus $d_G(x)\geq 3$, and $\lambda(x) \leq \frac  1{10}$. 
The same is true for vertex $y$. 

By \Cref{claim:ABC:weight is bigger than gain of neighbors}, 
$\gamma(x) + \gamma(y) < f(v;G) = \frac 13$. 
Thus, $x$ and $y$ cannot both be in $A_3$ (as they would then each have a gain of $\frac 16$), and it follows that $y\notin A_3$ since $f(x;G) \geq f(y;G)$. 
This implies in turn that $\lambda(y) \leq \frac {1}{15}$, and thus 
\[
\lambda(x) + \lambda(y) \leq \frac 1{10}+\frac 1{15} = \frac 16. 
\qedhere
\]
\end{claimproof}

\begin{claim}\label{claim:ABC:min weight in B_3 neighborhod is > 1/6}
$\min_{w \in N(v)}f(w;G) > \frac 16$ for every vertex $v \in B_3$. In particular, $G$ contains no $B_3$--$C$ edge.
\end{claim}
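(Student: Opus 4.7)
I would prove this claim by contradiction. Assume that some $v\in B_3$ has a neighbor $w$ with $f(w;G)\le 1/6$, and denote the three neighbors of $v$ as $x,y,z$ with $f(x;G)\ge f(y;G)\ge f(z;G)=f(w;G)$. Using the structural facts established so far---$\delta(G)\ge 2$ (\Cref{claim:no leaf in G}), $C_1=B_2=C_2=\varnothing$ (\Cref{claim:ABC:no B_2 and no C_2} together with the observation $C_1=\varnothing$ from the proof of \Cref{claim:no leaf in G}), and the absence of any $A_2$--$B_3$ edge (\Cref{claim:ABC:no A_2-B_3 edge})---a direct inspection of $f_A$, $f_B$, and $f_C$ shows that $f(z;G)\le 1/6$ forces $z$ into one of $A_d$ with $d\ge 11$, $B_d$ with $d\ge 7$, or $C_d$ with $d\ge 3$.

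The ``in particular'' part follows for free once the main inequality is proved: every vertex in $C$ has degree at least $3$ (by $\delta(G)\ge 2$ and $C_2=\varnothing$), and therefore satisfies $f_C(d)=2/(3(d+1))\le 1/6$; hence if $\min_{w\in N(v)}f(w;G)>1/6$, no neighbor of $v$ can lie in $C$. For the main inequality, I would first apply \Cref{claim:ABC:(1-weight) is bigger than weight of neighbors} to $v$, obtaining $f(x;G)+f(y;G)+f(z;G)>2/3$; since $f(z;G)\le 1/6$ this forces $f(x;G)>1/4$, which in turn forces $x\notin C$ (since $C$-vertices satisfy $f\le 1/6$). I would then consider the induced linear forest $F_0$ consisting of the single edge $vx$---valid because $v\in B$ and $x\in A\cup B$ each have degree exactly $1$ in $F_0$---and apply \Cref{claim:ABC:ILF size is bouded by sum of weights} to $F_0$, obtaining
\[
2<\sum_{u\in N[V(F_0)]}f(u;G)-\sum_{u\in N^2(V(F_0))}\gamma(u).
\]
Expanding the right-hand side using $f(v;G)=1/3$, the bounds $f(x;G),f(y;G)\le 1/2$ (neither neighbor is in $A_2$ or $B_2$), $f(z;G)\le 1/6$, $f(u;G)\le 2/3$ for each $u\in N(x)\setminus\{v\}$, and the gain bound $\gamma(x)+\gamma(y)+\gamma(z)<1/3$ from \Cref{claim:ABC:weight is bigger than gain of neighbors}, a careful quantitative derivation should yield the desired contradiction.

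The main obstacle I anticipate is the final quantitative step: a naive upper bound on $\sum_{u\in N[V(F_0)]}f(u;G)$ is far too weak because it includes contributions from all of $N(x)\setminus\{v\}$, a set of unbounded size. To force a tight contradiction I expect I will have to split into three subcases according to whether $z\in A$, $z\in B$, or $z\in C$, in some subcases replace $F_0$ by $\{v\}$ alone or by a length-two path $v$--$x$--$u$, and crucially exploit the correction term $\sum_{u\in N^2(V(F_0))}\gamma(u)$ together with the concrete numerical values of $f$ and $\gamma$ to close the gap. The subcase $z\in C$ is the most delicate since $z$ must stay isolated in any respecting linear forest; there I would combine induction on $G-v$ with a promotion of $z$ from $C$ to $B$ in the restricted partition, exploiting that $f_B(d-1)-f_C(d)\ge 1/6$ when $d=3$.
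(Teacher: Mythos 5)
Your preliminary observations are fine (the classification of where a neighbor of weight at most $\tfrac16$ could live, the deduction $f(x;G)>\tfrac14$, and the derivation of the ``in particular'' statement from $\delta(G)\ge 2$ and $C_1=C_2=\varnothing$), but the proof itself is missing: the entire contradiction is deferred to ``a careful quantitative derivation should yield the desired contradiction,'' and you yourself point out that the estimate obtained from \Cref{claim:ABC:ILF size is bouded by sum of weights} applied to $F_0=G[\{v,x\}]$ is far too weak. That is not a removable technicality. From $f(x;G)>\tfrac14$ you only get $d(x)\le 6$, while $y$ and $z$ remain of unbounded degree, so $\sum_{u\in N[V(F_0)]}f(u;G)$ can comfortably exceed $2$ and none of the obvious substitute forests ($\{v\}$ alone, the edge $vx$, a short path) visibly closes the gap; the subcase analysis you gesture at \emph{is} the difficulty, and it is not carried out. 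Moreover, the one concrete idea you give for the hardest subcase ($z\in C$) is unsound as stated: if you delete $v$ and promote $z$ from $C$ to $B$ in $G-v$, the forest $F'$ supplied by the minimal-counterexample assumption only satisfies $d_{F'}(z)\le 1$, whereas respecting the original partition requires $z$ to be isolated in the forest, so $F'$ cannot simply be reused in $G$ (a promotion weakens the degree constraint, and since the forest is induced you cannot repair it by discarding an edge without discarding a vertex and losing the count).

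For comparison, the paper's proof needs no case analysis on $z$ at all, because it treats $z$ and $v$ in the opposite way: it sets $G'\coloneqq G-z+xy$ and promotes $v$ to $A$, the added edge $xy$ guaranteeing that any forest respecting the new partition contains at most one of $x,y$ together with $v$, hence has $d(v)\le 1$ there and still respects the original partition. The bookkeeping is then immediate: deleting $z$ costs $f(z;G)$, promoting $v$ gains $f_A(2)-f_B(3)=\tfrac13$, and the possible degree increases of $x$ and $y$ cost at most $\lambda(x)+\lambda(y)\le\tfrac1{10}+\tfrac1{15}=\tfrac16$, using \Cref{claim:ABC:no A_2-B_3 edge} to rule out $x,y\in A_2$ and \Cref{claim:ABC:weight is bigger than gain of neighbors} to rule out $x,y$ both lying in $A_3$; minimality then forces $f(z;G)>\tfrac16$ in one stroke. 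Note that this is exactly the kind of compensated promotion your sketch lacks: a promotion is legitimate only when some added edge (or other structural device) re-imposes the stronger degree constraint on the promoted vertex.
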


\begin{claimproof} 
Suppose that $v$ is a vertex in $B_3$ and denote by $x, y, z$ its three neighbors in such a way that $f(x;G) \geq f(y;G) \geq f(z;G)$. 
By \Cref{claim:ABC:loss in neighbourhood of B_3 is at most 1/6}, we know that $\lambda(x) + \lambda(y) \leq \frac 16$. 


Now, let $G' \coloneqq G-z + xy$ (note that the edge $xy$ might already be in $G-z$, in which case it is not added). 
Let $(A', B', C')$ be the partition of $V(G')$ obtained by restricting $(A, B, C)$ to $V(G')$ and promoting $v$ (thus, $v\in A'_2$). 
Observe that every induced linear forest $F'$ in $G'$ respecting $(A', B', C')$ is also an induced linear forest in $G$ respecting $(A, B, C)$, since the edge $xy$ ensures that $F'$ cannot contain all three vertices $v, x, y$. 
By the minimality of our counterexample, $F'$ can be chosen so that it contains at least $f(G', A', B', C')$ vertices. 
Thus, we must have $f(G', A', B', C') < f(G, A, B, C)$. 
Observe that $d_G(x)-1 \leq d_{G'}(x) \leq d_G(x)+1$ (depending on whether the edges $xy$ and $xz$ are in $G$ or not), and thus in particular
\[f(x; G) - f(x; G') \leq \lambda(x).\]
The same holds for vertex $y$. 
Hence, letting $Z\coloneqq N_G(z) - \{v, x, y\}$, we obtain
\begin{align*}
0 &< f(G, A, B, C)-f(G', A', B', C') \\
&= f(z; G) + \underbrace {f(v; G)-f(v; G')}_{= \frac 13 - \frac 23 = -\frac 13} + \underbrace {f(x; G) - f(x; G')}_{\leq \lambda(x)} + \underbrace {f(y; G) - f(y; G')}_{\leq \lambda(y)}  - \underbrace {\sum_{w \in Z}\gamma(w)}_{\geq 0} \\
&\leq f(z;G) - \frac 13 + \lambda(x)+\lambda(y) - 0  \\
&\leq f(z;G) - \frac 13 + \frac 16 = f(z;G) - \frac 16.
\end{align*} 
Therefore, $f(z;G) > \frac 16$.
\end{claimproof}

\begin{claim}\label{claim:ABC:at most a single B_3 in a B_3's neighborhood}
If $v \in B_3$, then $\abs {N(v) \cap B_3} \leq 1$.
\end{claim}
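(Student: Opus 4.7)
I argue by contradiction: assume $v \in B_3$ has two distinct neighbors $x, y \in B_3 \cap N(v)$. Let $z$ be the third neighbor of $v$, and write $N(x) = \{v, x_1, x_2\}$, $N(y) = \{v, y_1, y_2\}$.

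The natural reduction is to consider $G' \coloneqq G - \{x, y\}$ endowed with the partition $(A', B', C')$ obtained simply by restricting $(A, B, C)$ to $V(G')$. In $G'$, the vertex $v$ has degree exactly $1$ (it keeps only its edge to $z$). In the generic case where $x_1, x_2, y_1, y_2$ are pairwise distinct and distinct from $z$, a direct bookkeeping yields
\[
f(G', A', B', C') = f(G, A, B, C) + \bigl(f_B(1) - f_B(3)\bigr) - f(x; G) - f(y; G) + S = f(G, A, B, C) - \frac{1}{6} + S,
\]
with $S \coloneqq \gamma(x_1) + \gamma(x_2) + \gamma(y_1) + \gamma(y_2)$. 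By the minimality of our counterexample, $G'$ contains an induced linear forest $F'$ respecting $(A', B', C')$ of size at least $f(G', A', B', C')$; since $F' \subseteq V(G) \setminus \{x, y\}$ and the ranks coincide on $V(G')$, $F'$ is also an ILF in $G$ respecting $(A, B, C)$ (and note that the constraint $d_{F'}(v) \leq 1$ for $v \in B'$ is automatic, as $v$ has degree $1$ in $G'$). Hence, whenever $S \geq \frac{1}{6}$, the inequality $|V(F')| \geq f(G, A, B, C)$ immediately contradicts that $G$ is a counterexample.

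The crux of the proof is therefore to handle the case $S < \frac{1}{6}$. By \Cref{claim:ABC:min weight in B_3 neighborhod is > 1/6}, none of $x_1, x_2, y_1, y_2$ lies in $C$; inspecting \Cref{tab:gains} together with $\delta(G) \geq 2$ and $B_2 = \varnothing$ shows that $\gamma(w) = 0$ for $w \in A \cup B$ is only possible when $w \in B_3$, every other admissible value being at least $\frac{1}{15}$. So $S < \frac{1}{6}$ forces most (in the extreme subcase, all) of $x_1, x_2, y_1, y_2$ to lie in $B_3$, pinning down a rigid local ``$B_3$-cluster'' around $v$. In this regime, the plan is to iterate the very same deletion argument at another $B_3$-vertex inside the cluster (for instance $x$, whose three neighbors $v, x_1, x_2$ all lie in $B_3$), accumulating further $-\frac{1}{6} + S'$-type savings at the next layer. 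Non-generic coincidences among $\{v, x, y, z, x_1, x_2, y_1, y_2\}$, such as $xy \in E(G)$ or $x_i = y_j$ creating a shared degree-drop, only tighten the bookkeeping in our favor and close the argument immediately; thus the only delicate situation is the truly regular pure $B_3$-cluster, which is where the main technical obstacle lies and must be resolved by this careful iterated reduction.
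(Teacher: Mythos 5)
The first half of your reduction (deleting $\{x,y\}$ and winning when $S \geq \tfrac 16$) is fine, but the case you yourself identify as the crux, $S < \tfrac 16$, is not proved: it is only a plan. Worse, the step that is supposed to pin down a ``rigid $B_3$-cluster'' rests on a false statement. It is not true that every admissible neighbor of $x$ or $y$ outside $B_3$ has gain at least $\tfrac 1{15}$: by \Cref{tab:gains}, a vertex $w\in A$ of degree $d\geq 5$ has $\gamma(w)=\tfrac 2{d(d+1)}$ and one in $B$ has $\gamma(w)=\tfrac 4{3d(d+1)}$, which are arbitrarily small. The facts available at this point (\Cref{claim:ABC:no A_2-B_3 edge}, \Cref{claim:ABC:min weight in B_3 neighborhod is > 1/6}, \Cref{claim:ABC:no B_2 and no C_2}) only force $x_1,x_2,y_1,y_2$ out of $A_2$ and $C$; they can all be high-degree $A$-vertices, in which case $S<\tfrac 16$ while none of them lies in $B_3$, so there is no cluster to iterate into and your single deletion simply loses $\tfrac 16 - S>0$ with no mechanism to recover it. Even in a genuine all-$B_3$ configuration, the iteration is not carried out: each further deletion costs another $\tfrac 16$, and you give no accounting or termination argument showing the accumulated gains ever catch up; the claim that all degenerate coincidences ``only tighten the bookkeeping'' is also unverified (e.g.\ if $xy\in E(G)$ you lose one of the four gain terms, which works against you).

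For comparison, the paper avoids this dead end by changing the graph rather than just deleting: it removes only $v$, adds the edges $xy$, $yz$, $xz$ so that any linear forest of the smaller graph misses at least two of $x,y,z$ (allowing $v$ to be re-inserted for a clean $+1$), and \emph{demotes} $x,y,z$ by one rank. The total loss is then bounded by $f_B(3)-f_C(4)=\tfrac 15$ for each of $x,y$ and $\tfrac 7{30}$ for $z$, so the net change is at least $1-\tfrac 13-\tfrac 25-\tfrac 7{30}=\tfrac 1{30}>0$, closing the case in one step with no analysis of the second neighborhood. If you want to salvage your route, you would need a genuinely new idea for the $S<\tfrac 16$ regime (small gains at large degree make a purely local deletion argument insufficient); as written, the proposal has a gap exactly where the difficulty lies.
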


\begin{claimproof}
Suppose that $v$ is a vertex in $B_3$ and denote by $x, y, z$ its three neighbors. 
Arguing by contradiction, suppose that at least two of them are in $B_3$, say $x, y\in B_3$. 
From \Cref{claim:ABC:min weight in B_3 neighborhod is > 1/6}, we know that $z$ cannot be in $C$. Furthermore, by \Cref{claim:ABC:no B_2 and no C_2} and \Cref{claim:ABC:no A_2-B_3 edge}, we know that $d(z) \geq 3$. 

Let $G' \coloneqq G-v +xy +yz + xz$. 
(Note that some of the edges $xy,yz,xz$ might already be in $G$.) 
Let $(A', B', C')$ be the partition of $V(G')$ obtained by restricting $(A, B, C)$ to $V(G')$ and demoting $x, y$, and $z$. 
Since $|V(G')| < |V(G)|$, the graph $G'$ contains an induced linear forest $F'$ respecting $(A', B', C')$ with at least $f(G', A', B', C')$ vertices. 
Now, let $F \coloneqq G[V(F') \cup \{v\}]$. 
Observe that $F$ is an induced linear forest in $G$ respecting $(A, B, C)$, since $F'$ contains at most one of $x, y, z$. 
Observe also that $d_{G'}(x) \leq d_{G}(x) +1$, and thus $f(x;G) - f(x;G') \leq f_B(3) - f_C(4) = \frac 15$. 
The same holds for vertex $y$. 
Moreover,  
\[f_A(d) - f_B(d+1) = \frac {2(d+4)}{3(d+1)(d+2)}\qquad\text{ and }\qquad f_B(d)-f_C(d+1) = \frac {2(d+3)}{3(d+1)(d+2)}\]
for $d\geq 3$. 
Hence, since $d_{G'}(z) \leq d_{G}(z) +1$, we deduce that 
$f(z;G) - f(z;G') \leq \frac {2(3+4)}{3(3+1)(3+2)} = \frac 7{30}$. 
Therefore, 
\begin{align*}
\abs {V(F)} &\geq f(G', A', B', C') + 1 \\
&= f(G, A, B, C) - f(v;G) - \sum_{w\in N(v)}(f(w;G) - f(w;G')) + 1 \\
&\geq f(G, A, B, C) - f(v;G) - 2 \cdot \frac 15 - \frac 7{30} + 1  \\
&= f(G, A, B, C) + \frac 1{30} \\
&\geq f(G, A, B, C),
\end{align*}
contradicting the fact that $G$ and $(A, B, C)$ form a counterexample.
\end{claimproof}

\begin{claim}\label{claim:ABC:no A_3 adjacent to two B_3's}
If $v \in A_3$, then $\abs {N(v) \cap B_3} \leq 1$.
\end{claim}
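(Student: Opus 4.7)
The plan is to argue by contradiction: assume $v \in A_3$ has two neighbors $x, y \in B_3$, and let $z$ be $v$'s third neighbor. Following the template of earlier claims in this proof (in particular \Cref{claim:ABC:at most a single B_3 in a B_3's neighborhood}), I would delete $v$ from $G$, demote $x, y, z$ suitably in the partition, invoke the minimality of the counterexample on the resulting graph $G'$, and then add $v$ back to the guaranteed induced linear forest $F'$ to obtain $F \coloneqq G[V(F')\cup\{v\}]$ with $|V(F)| \geq f(G, A, B, C)$, the desired contradiction.

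The crucial preliminary step is to rule out $z \in C$. By \Cref{claim:ABC:weight is bigger than gain of neighbors} applied at $z$, we have $f(z; G) > \gamma(v) = \frac{1}{6}$ (since $v \in N(z)$ and $v \in A_3$). Reading off the definition of $f_C$, one sees that $f_C(d) \leq \frac{1}{6}$ for all $d \geq 1$, and by \Cref{claim:no leaf in G} we have $\delta(G) \geq 2$, so these two facts combine to exclude $z \in C$. This is what permits the one-rank demotion of $z$ below.

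I would then set $G' \coloneqq G - v + xy$, where the edge $xy$ is added only if not already present in $G$, and take $(A', B', C')$ to be the restriction of $(A, B, C)$ to $V(G')$ in which $x$ and $y$ are demoted to $C'$ and $z$ is demoted by one rank. The presence of the edge $xy$ in $G'$ together with $x, y \in C'$ forbids both $x$ and $y$ from lying in $V(F')$, which gives $d_F(v) \leq 1 + [z \in V(F')] \leq 2$; the demotions yield the remaining degree bounds at $x, y, z$; and no cycle is introduced, since at most one of $x, y$ belongs to $V(F')$ and is isolated there. For the weight accounting, $v$ contributes $f(v;G) = \frac{1}{2}$, and each of $x, y$ contributes a loss of exactly $\frac{1}{6}$, because $d_{G'}(x), d_{G'}(y) \in \{2,3\}$ and $f_C(2) = f_C(3) = \frac{1}{6}$. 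A short case analysis on $z$'s original rank and degree, using the gain and loss values already tabulated in the proof, shows that the one-rank demotion of $z$ with simultaneous decrease of its degree by $1$ costs at most $\frac{1}{6}$ (attained at $z \in A_3$ or $z \in B_3$, while $z \in A_2$ in fact yields a gain). Summing, $f(G, A, B, C) - f(G', A', B', C') \leq \frac{1}{2} + \frac{1}{6} + \frac{1}{6} + \frac{1}{6} = 1$, hence $|V(F)| \geq f(G', A', B', C') + 1 \geq f(G, A, B, C)$, the required contradiction. The main obstacle is the control of $z$'s contribution: the bound $\frac{1}{6}$ is tight, and the argument breaks without the upfront observation $z \notin C$, since otherwise $z$ could not be demoted and the constraint $d_F(z) \leq 0$ would be violated when $z \in V(F')$.
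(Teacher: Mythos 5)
Your proposal is correct and follows essentially the same route as the paper's proof: the same auxiliary graph $G' = G - v + xy$, the same demotions of $x,y,z$ (with the preliminary observation $z \notin C$ via \Cref{claim:ABC:weight is bigger than gain of neighbors}), and the same accounting giving a total drop of at most $\frac 12 + 3\cdot\frac 16 = 1$. The only cosmetic difference is that you re-derive the ``demote and lower degree costs at most $\frac 16$'' bound for $z$ by case analysis instead of citing \Cref{claim:ABC:demoting and lowering degree costs at most 1/6}, which the paper invokes directly.
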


\begin{claimproof}
Suppose that $v$ is a vertex in $A_3$ and denote by $x, y, z$ its three neighbors. 
Arguing by contradiction, suppose that at least two of them are in $B_3$, say $x, y\in B_3$. 
Note that by \Cref{claim:ABC:weight is bigger than gain of neighbors}, we know that $f(z; G) > \gamma(v) = \frac 16$, thus $z \notin C$, and hence $z$ can be demoted.
Let $G' \coloneqq G-v + xy$, and let $(A', B', C')$ be the partition of $V(G')$ obtained by restricting $(A, B, C)$ to $V(G')$ and demoting $x,y,z$
(thus, $x,y$ are both in $C'_2$ or both in $C'_3$, depending on whether the edge $xy$ exists in $G$ or not). 
Since $|V(G')| < |V(G)|$, the graph $G'$ contains an induced linear forest $F'$ respecting $(A', B', C')$ with at least $f(G', A', B', C')$ vertices. 
Let $F \coloneqq G[V(F') \cup \{v\}]$. 
Observe that $F$ is an induced linear forest in $G$ respecting $(A, B, C)$. 
Indeed, $d_F(w) \leq d_{F'}(w)+1$ for every $w\in N(v)$, and 
$d_F(v) \leq 2$ since $F'$ contains at most one of $x, y$ (because of the edge $xy$), and furthermore $v$ is not in a cycle in $F$ since $x,y\in C'$. 
But then
\begin{align*}
    \abs {V(F)} &= \abs {V(F')} + 1 \\
    &\geq f(G', A', B', C') + 1 \\
    &\geq f(G, A, B, C) - f(v;G) - 3 \cdot \frac 16 + 1 \\
    &= f(G, A, B, C),
\end{align*}
where the second inequality is obtained by \Cref{claim:ABC:demoting and lowering degree costs at most 1/6}, which ensures that
$f(z; G)-f(z; G') \leq \frac 16$, and by the fact that $f_C(2)=f_C(3) = \frac 16$, hence for $w \in \{x, y\}$, we know that
$f(w; G)-f(w; G') = \frac 13 - \frac 16 = \frac 16$.
But this contradicts the fact that $G$ and $(A, B, C)$ form a counterexample.
\end{claimproof}

\begin{claim}\label{claim:ABC:at most one common neighbor between B_3's}
If $u, v$ are two distinct vertices in $B_3$, then $\abs {N(u) \cap N(v)} \leq 1$.
\end{claim}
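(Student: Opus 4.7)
The plan is to argue by contradiction: assume $u, v \in B_3$ are distinct and share at least two common neighbors $x, y$. The prior claims combine to force $x, y \in A_{\geq 4} \cup B_{\geq 3}$: in particular $x, y \notin A_3$ by \Cref{claim:ABC:no A_3 adjacent to two B_3's}, since each is adjacent to the two $B_3$-vertices $u$ and $v$, and the other low-degree or $C$ cases are ruled out by \Cref{claim:ABC:no A_2-B_3 edge}, \Cref{claim:ABC:no B_2 and no C_2}, and \Cref{claim:ABC:min weight in B_3 neighborhod is > 1/6}. Consequently $f(x), f(y) \leq \tfrac{2}{5}$.

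First, the easy situations are handled by a direct application of \Cref{claim:ABC:ILF size is bouded by sum of weights} to $F^* = \{u, v\}$, which is always an induced linear forest respecting $(A, B, C)$ since $u, v \in B_3$ have degree at most $1$ in $F^*$. If $u \sim v$, then $N[V(F^*)] = \{u, v, x, y\}$, and the claim gives $2 < \tfrac{2}{3} + f(x) + f(y)$, so $f(x) + f(y) > \tfrac{4}{3}$, contradicting $f(x) + f(y) \leq \tfrac{4}{5}$. If $u \not\sim v$ and there is a third common neighbor $z$, then the same bound (now $N[V(F^*)] \supseteq \{u, v, x, y, z\}$, and $z \notin A_3$ by the same reasoning, so $f(z) \leq \tfrac{2}{5}$) yields $f(x) + f(y) + f(z) > \tfrac{4}{3}$, contradicting $\leq \tfrac{6}{5}$.

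The remaining case is $u \not\sim v$ with $N(u) \cap N(v) = \{x, y\}$ exactly; write $N(u) = \{x, y, z_u\}$ and $N(v) = \{x, y, z_v\}$ with $z_u \neq z_v$. Here the contribution of $z_u, z_v$ (each up to $\tfrac{1}{2}$ if in $A_3$) is too large for the direct argument, so we follow the modification strategy of \Cref{claim:ABC:at most a single B_3 in a B_3's neighborhood}: set $G' \coloneqq G - u + \{xy, xz_u, yz_u\}$ (adding only edges not already present in $G$), and let $(A', B', C')$ be obtained by restricting $(A, B, C)$ to $V(G')$ and demoting $x, y, z_u$. By minimality, $G'$ admits an induced linear forest $F'$ respecting $(A', B', C')$ with $|V(F')| \geq f(G', A', B', C')$, and we set $F \coloneqq G[V(F') \cup \{u\}]$. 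The added triangle on $\{x, y, z_u\}$ combined with the demotions -- and, when some of $x, y, z_u$ originally lie in $A$, the auxiliary constraint from $v \in B_3$ still being present in $G'$, which forces $|F' \cap \{x, y, z_v\}| \leq 1$ whenever $v \in F'$, together with a swap $F' \mapsto (F' \setminus \{w\}) \cup \{v\}$ when $v \notin F'$ -- shows $|V(F') \cap \{x, y, z_u\}| \leq 1$, so $F$ is a valid induced linear forest in $G$ respecting $(A, B, C)$. Estimating each demotion cost as in \Cref{claim:ABC:at most a single B_3 in a B_3's neighborhood},
\[
f(G, A, B, C) - f(G', A', B', C') \;\leq\; f(u; G) + \Delta(x) + \Delta(y) + \Delta(z_u) \;\leq\; \tfrac{1}{3} + \tfrac{1}{5} + \tfrac{1}{5} + \tfrac{7}{30} \;=\; \tfrac{29}{30} \;<\; 1,
\]
giving $|V(F)| \geq f(G', A', B', C') + 1 \geq f(G, A, B, C) + \tfrac{1}{30}$, a contradiction.

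The main obstacle is the class analysis in the last case: when $x$ or $y$ originally lies in $A$, demotion only reaches $B'$ (not $C'$), so the triangle alone does not force $|V(F') \cap \{x, y, z_u\}| \leq 1$. One must leverage the continued presence of the second $B_3$-vertex $v$ in $G'$ as an auxiliary constraint, and, when $v \notin F'$, perform a local swap exchanging a problematic vertex of $F'$ for $v$, in order to both maintain the forest size and enforce the required degree bound on $u$ in $F$.
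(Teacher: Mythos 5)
Your Cases 1 and 2 are correct, but they are the easy part; the main case (exactly two common neighbors and $u \not\sim v$) contains a genuine gap, which you flag yourself but do not close. In $G' \coloneqq G-u+\{xy, xz_u, yz_u\}$ with $x, y, z_u$ demoted, the added triangle only prevents \emph{all three} of $x, y, z_u$ from lying in $F'$; it does not prevent two of them. Since $x, y \in A_{\geq 4} \cup B_{\geq 3}$ and $z_u$ may lie in $A_3 \cup A_{\geq 4}$, a vertex of $\{x,y,z_u\}$ that originally lies in $A$ is demoted only to $B'$, hence may have degree $1$ in $F'$. In particular $F'$ may contain the pair $x, z_u$ joined by the added edge $xz_u$ (which need not be an edge of $G$), and then $F = G[V(F') \cup \{u\}]$ has $d_F(u) = 2$, violating the constraint for $u \in B$. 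The auxiliary constraint you invoke from $v$ only restricts $V(F') \cap \{x, y, z_v\}$: when $v \in V(F')$ it rules out the pair $\{x, y\}$, but it says nothing about the pairs $\{x, z_u\}$ and $\{y, z_u\}$, because $vz_u \notin E(G')$. The swap for the case $v \notin V(F')$ is unspecified: exchanging a vertex of $F'$ for $v$ can give $v$ two neighbors in the resulting induced subgraph (it is adjacent to $x$, $y$, $z_v$ in $G$) or otherwise break the respected degree bounds, and no argument is given that the size bound survives. As written, the construction does not yield a forest respecting $(A, B, C)$, so the final inequality proves nothing.

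For comparison, the paper's proof needs no surgery and no case distinction: it simply deletes the two common neighbors $x, y$. Since $x, y \notin A_2 \cup A_3$ (\Cref{claim:ABC:no A_2-B_3 edge} and \Cref{claim:ABC:no A_3 adjacent to two B_3's}), $f(x;G)+f(y;G) < 1$, while $u$ and $v$ both drop from degree $3$ to degree $1$, each gaining $f_B(1)-f_B(3) = \frac 12$; hence the $f$-value of $G-\{x,y\}$ with the restricted partition exceeds $f(G, A, B, C)$, and any forest of $G-\{x,y\}$ respecting the restricted partition is already a forest of $G$ respecting $(A, B, C)$---no vertex is added back, so no degree condition has to be re-verified. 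If you want to rescue your approach in the last case you would have to control the pairs containing $z_u$ as well (for instance by a different modification of $G$), but the deletion argument makes all of this unnecessary, and it also absorbs your Cases 1 and 2.
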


\begin{claimproof}
Arguing by contradiction, suppose that $u, v$ are two distinct vertices in $B_3$ with two common neighbors $x, y$. 
We know from \Cref{claim:ABC:no A_2-B_3 edge} and \Cref{claim:ABC:no A_3 adjacent to two B_3's} that neither $x$ nor $y$ can be in $A_2 \cup A_3$. 
In particular $f(x;G) < \frac 12$ and $f(y;G) < \frac 12$. 
Let $G' \coloneqq G-\{x, y\}$, and let $F'$ be an induced linear forest in $G'$ respecting the restriction of $(A,B,C)$ to $V(G')$ with at least $f(G', A', B', C')$ vertices.  
Obviously, $F'$ is an induced linear forest in $G$ respecting  $(A,B,C)$ as well.  
Then, 
\begin{align*}
\abs {V(F')} &\geq f(G', A', B', C') \\
&\geq f(G, A, B, C)-f(x;G)-f(y;G)+2\left(f_B(1)-f_B(3)\right) \\
&> f(G, A, B, C) - 2 \cdot \frac 12 + 2\left(\frac 56 - \frac 13\right) \\
&= f(G, A, B, C),
\end{align*}
which is a contradiction.
\end{claimproof}

\begin{claim}\label{claim:ABC:no C_3--C_3 edge}
$G$ does not contain any $C_3$--$C_3$ edge.
\end{claim}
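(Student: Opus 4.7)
The plan is to assume for contradiction that there is an edge $uv\in E(G)$ with $u,v\in C_3$, write $N(u)=\{v,u_1,u_2\}$ and $N(v)=\{u,v_1,v_2\}$, and derive a contradiction by producing an induced linear forest in $G$ respecting $(A,B,C)$ with at least $f(G,A,B,C)$ vertices. First, from \Cref{claim:ABC:weight is bigger than gain of neighbors} applied to $u$ and to $v$, using $\gamma(u)=\gamma(v)=0$ (which follows from $u,v\in C_3$ and \Cref{tab:gains}), I obtain the key bounds $\gamma(u_1)+\gamma(u_2)<\frac 16$ and $\gamma(v_1)+\gamma(v_2)<\frac 16$.

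The main construction is to delete both $u$ and $v$ and reinsert one of them as an isolated vertex (which is allowed since $u,v\in C$ need $d_F\leq 0$). Set $G'\coloneqq G-\{u,v\}$ with the restriction $(A',B',C')$ of $(A,B,C)$. By the minimality of the counterexample, $G'$ contains an induced linear forest $F'$ respecting $(A',B',C')$ with $|V(F')|\geq f(G,A,B,C)-\frac 13+\Gamma$, where $\Gamma\geq\gamma(u_1)+\gamma(u_2)+\gamma(v_1)+\gamma(v_2)$ collects the nonnegative $f$-gains from the decreased degrees of $u_1,u_2,v_1,v_2$. Since $u,v\notin V(F')$, the forest $F'$ already respects $(A,B,C)$ in $G$. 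If $V(F')\cap\{u_1,u_2\}=\varnothing$, then $F'\cup\{u\}$ is an induced linear forest in $G$ respecting $(A,B,C)$ of size at least $|V(F')|+1\geq f(G,A,B,C)+\frac 23$, contradicting the counterexample. A symmetric argument handles the case $V(F')\cap\{v_1,v_2\}=\varnothing$.

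The main obstacle is the residual case where both $\{u_1,u_2\}$ and $\{v_1,v_2\}$ meet $V(F')$, since neither $u$ nor $v$ can then be added directly. To address it, I would pass to a secondary subproblem: pick $u_1\in V(F')$ (WLOG) and apply minimality to $G''\coloneqq G-\{u,v,u_1\}$ with the restricted partition. A crucial observation stemming from \Cref{claim:no leaf in G} and \Cref{claim:ABC:no B_2 and no C_2} is that every vertex $w\in V(G)$ satisfies $f(w;G)\leq f_A(2)=\frac 23$, since the only way to exceed $\frac 23$ would be through a vertex in $A_0\cup A_1\cup B_0\cup B_1\cup C_0$, all of which are empty. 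The resulting ILF $F''$ has size at least $f(G,A,B,C)-\frac 13-f(u_1;G)+\text{(nonneg gains)}$, so if it admits an extension by $u$ or by $v$ as an isolated vertex (i.e.\ $u_2\notin V(F'')$, or $\{v_1,v_2\}\cap V(F'')=\varnothing$), the resulting forest has size at least $f(G,A,B,C)+\frac 23-f(u_1;G)\geq f(G,A,B,C)$, yielding the contradiction.

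The remaining sub-cases, where no such extension is possible on either side of $u$ or $v$, would be closed by iterating the removal, for instance considering $G-\{u,v,u_1,u_2\}$ (which allows adding $u$ unconditionally, offset by removing $f(u_1;G)+f(u_2;G)$) or $G-\{u,v,u_1,v_1\}$, and exploiting the strict inequality $\gamma(u_1)+\gamma(u_2)+\gamma(v_1)+\gamma(v_2)<\frac 13$ together with the accumulated $\gamma$-gains from neighbors of the additionally removed vertices to dominate the $f$-cost of the extra removals. The bookkeeping of these interlocking cases is the most delicate part of the argument.
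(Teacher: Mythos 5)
Your first reduction is fine: with $u,v\in C_3$ adjacent, deleting both and re-inserting $u$ (or $v$) as an isolated vertex works whenever the forest obtained for $G-\{u,v\}$ happens to avoid $\{u_1,u_2\}$ (or $\{v_1,v_2\}$), and your observation that $f(w;G)\leq \frac 23$ for all $w$ is correct. But the heart of the claim is exactly the residual case that you defer to ``bookkeeping'', and that bookkeeping does not close as sketched. Follow your own fallback $G-\{u,v,u_1,u_2\}$, where $u$ can be added unconditionally: the balance you need is $f(u_1;G)+f(u_2;G)\leq \frac 23+(\text{gains})$. From $\gamma(u_1)+\gamma(u_2)<\frac 16$ (your key bound) together with \Cref{claim:ABC:min weight in B_3 neighborhod is > 1/6}, the worst admissible configuration is $u_1\in A_4$ and $u_2\in A_6$, giving $f(u_1;G)+f(u_2;G)=\frac 25+\frac 27=\frac{24}{35}>\frac 23$, a deficit of $\frac 2{105}$. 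The ``accumulated $\gamma$-gains'' you invoke to cover it come from $v_1,v_2$ and from the other neighbors of $u_1,u_2$, but none of the claims available at this point prevents all of these vertices from lying in $B_3\cup C_3$, i.e.\ from having zero gain (note that $v_1,v_2$ may themselves be in $C_3$ --- you cannot invoke the statement being proved), so the iteration can stall with a strictly negative balance; the variant $G-\{u,v,u_1,v_1\}$ is worse (cost up to $\frac 45$, and both extensions still conditional). There is also a smaller slip: $\Gamma\geq\gamma(u_1)+\gamma(u_2)+\gamma(v_1)+\gamma(v_2)$ can fail when some $u_i=v_j$ is a common neighbor (a common neighbor in $B_4$ loses two degrees but $f$ increases only by $\frac 1{15}$, not $\frac 2{15}$); this is harmless in your first two cases, which only need $\Gamma\geq 0$, but it shows the ledger needs care.

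What is missing is precisely the structural step the paper performs before any deletion: applying \Cref{claim:ABC:(1-weight) is bigger than weight of neighbors} to a $C_3$ endpoint gives $f(u_1;G)+f(u_2;G)>\frac 23$, which combined with \Cref{claim:ABC:weight is bigger than gain of neighbors} and \Cref{claim:ABC:min weight in B_3 neighborhod is > 1/6} pins down $u_1\in A_4$ and $u_2\in A_6$ (and likewise on the other side); then the $N^2$-term in \Cref{claim:ABC:ILF size is bouded by sum of weights}, applied with $F$ a single $C_3$ endpoint, forces the two $C_3$ vertices to share these two neighbors; finally deleting only the two $C_3$ vertices makes each shared neighbor lose two degrees, so $f$ of the reduced instance exceeds $f(G,A,B,C)$ by $\frac 1{21}$, contradicting minimality outright. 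Some argument of this kind --- forcing the outside neighborhoods to coincide, or otherwise guaranteeing strictly positive compensating gains --- is needed to finish; as written, your proposal leaves the decisive sub-case open.
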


\begin{claimproof}
Arguing by contradiction, suppose that $vw$ is an edge of $G$ with $v, w \in C_3$. 
Denote by $x, y$ and $s, t$ the other two neighbors of respectively $v$ and $w$, in such a way that $f(x;G) \geq f(y;G)$ and $f(s;G) \geq f(t;G)$. 
By \Cref{claim:ABC:(1-weight) is bigger than weight of neighbors} we know that $f(x;G)+f(y;G) > 1-f(v;G)-f(w;G)=\frac 23$, and in particular $f(x;G) > \frac 13$. Furthermore, \Cref{claim:ABC:weight is bigger than gain of neighbors} implies that $\gamma(x) < \frac 16$, and thus $x \in A_4$.
From \Cref{claim:ABC:weight is bigger than gain of neighbors}, we know that $\gamma(y) < f(v; G)-\gamma(x) = \frac 16 - \frac 1{10} = \frac 1{15}$. 
Finally, since $f(y;G) > \frac 23-f(x;G) = \frac 4{15}$,  we deduce that either $y \in A_6$ or $y\in B_3$. However, \Cref{claim:ABC:min weight in B_3 neighborhod is > 1/6} implies that $y\notin B_3$, hence $y \in A_6$. 
A symmetric argument shows that $s \in A_4$ and $t \in A_6$.

Let $S\coloneqq N^2(w) \cap \{x, y\}$. 
Let $G'\coloneqq G- N[w]$ and let  $(A', B', C')$ be the restriction of $(A, B, C)$ to $V(G')$. 
Since $\gamma(z) \geq \frac 1{21}$ holds for every vertex $z \in S$, 
applying \Cref{claim:ABC:ILF size is bouded by sum of weights} with $F \coloneqq G[\{w\}]$ we obtain
\begin{align*}
1 &< f(G, A, B, C) - f(G', A', B', C') \\
&= f(v;G)+f(w;G)+f(s;G)+f(t;G) + \sum_{z \in N^2(w)} (f(z;G) - f(z;G')) \\
&\leq f(v;G)+f(w;G)+f(s;G)+f(t;G) + \sum_{z \in S} (f(z;G) - f(z;G')) \\
&\leq f(v;G)+f(w;G)+f(s;G)+f(t;G) - \sum_{z \in S}\gamma(z) \\
&\leq \frac {107}{105} - \frac 1{21}\abs {S},
\end{align*}
implying that $\abs {S} < \frac 25$, and thus $\abs {S} = 0$. 
Hence, $x, y \notin N^2(w)$, and therefore $x, y \in N(w)$, implying that $x=s$ and $y=t$. 

Let $G''\coloneqq G-\{v, w\}$, and let  $(A'', B'', C'')$ be the restriction of $(A, B, C)$ to $V(G'')$. 
By the minimality of our counterexample, we have $f(G, A, B, C) > f(G'', A'', B'', C'')$. 
However, 
\begin{align*}
f(G'', A'', B'', C'') 
&= f(G, A, B, C) - 2f_C(3) + \left(f_A(2) - f_A(4)\right) + \left(f_A(4) - f_A(6)\right)  \\
&= f(G, A, B, C) + \frac 1{21} \\
&\geq f(G, A, B, C),
\end{align*}
which is a contradiction.
\end{claimproof}

Let us show that some vertex of $G$ has strictly positive gain. 
Observe that if $\gamma(v)=0$ holds for a vertex $v \in V(G)$ then $v$ must belong to one of the three sets $B_3, C_2, C_3$. 
Moreover, $C_2 = \varnothing$ by \Cref{claim:ABC:no B_2 and no C_2}. 
Thus, if all vertices of $G$ have a gain of zero, then from the connectedness of $G$ and \Cref{claim:ABC:min weight in B_3 neighborhod is > 1/6} we deduce that either all vertices are in $B_3$, or they are in all $C_3$. 
However, \Cref{claim:ABC:at most a single B_3 in a B_3's neighborhood} rules out the former case, while \Cref{claim:ABC:no C_3--C_3 edge} rules out the latter. 
Therefore, some vertex of $G$  has strictly positive gain. 

For the rest of the proof, we fix some vertex $v^*$ with $\gamma(v^*) > 0$  and minimizing $\gamma(v^*)$ among all such vertices. 
Furthermore, in case $\gamma(v^*) = \frac 16$ and $A_3 \neq \varnothing$, we choose $v^*$ so that $v^* \in A_3$.   

Observe that $d(v^*) \geq 2$ (since $\delta(G) \geq 2$ by \Cref{claim:no leaf in G}) and $d(v^*) \geq 4$ in case $v^*\in B \cup C$.

\begin{claim}
\label{claim:ABC:properties of v^*}
The vertex $v^*$ satisfies the following three properties: 
\begin{enumerate}
    \item \label{item not in A2} $v^* \notin A_2$,    
    \item \label{item formula weight v} $f(v^*;G) = d(v^*)\gamma(v^*)$, and
    \item \label{item neighbor in B3 or C3} $N(v^*) \cap (B_3 \cup C_3) \neq \varnothing$. 
\end{enumerate}
\end{claim}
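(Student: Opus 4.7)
The plan is to prove the three properties in the order listed---first (1), then (2) as an algebraic consequence of (1), and finally (3) in essentially one line from (2) and \Cref{claim:ABC:weight is bigger than gain of neighbors}. Before starting, I would record the short list of possible types for $v^*$: since $\gamma(v^*) > 0$, consulting the gain table eliminates $B_3$, $C_2$, and $C_3$; combined with $\delta(G) \geq 2$ (\Cref{claim:no leaf in G}), with $B_2 = C_2 = \varnothing$ (\Cref{claim:ABC:no B_2 and no C_2}), and with $C_1 = \varnothing$ (established inside the proof of \Cref{claim:no leaf in G}), this forces $v^* \in A_d$ for some $d \geq 2$, $v^* \in B_d$ for some $d \geq 4$, or $v^* \in C_d$ for some $d \geq 4$.

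The main obstacle is (1), which I plan to prove by contradiction. Assume $v^* \in A_2$, so $\gamma(v^*) = \frac{1}{6}$ and, by the tiebreak rule in the definition of $v^*$, $A_3 = \varnothing$. The minimality of $\gamma(v^*)$ among vertices of positive gain, together with the gain table, then rules out $A_d$, $B_d$, and $C_d$ for every $d \geq 4$, since every such vertex has gain strictly between $0$ and $\frac{1}{6}$. Combined with the empty sets recorded above, this yields $V(G) = A_2 \cup B_3 \cup C_3$. I would then inspect which edges can occur in $G$: \Cref{claim:ABC:no A_2-B_3 edge} excludes $A_2$--$B_3$ edges; \Cref{claim:ABC:weight is bigger than gain of neighbors} applied to any potential $A_2$--$C_3$ edge forces the absurd inequality $\frac{1}{6} > \frac{1}{6}$ and hence excludes such edges; \Cref{claim:ABC:min weight in B_3 neighborhod is > 1/6} excludes $B_3$--$C_3$ edges; and \Cref{claim:ABC:no C_3--C_3 edge} excludes $C_3$--$C_3$ edges. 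Hence every $C_3$ vertex is isolated in $G$, and since $G$ is connected on at least three vertices, $C_3 = \varnothing$. The absence of $A_2$--$B_3$ edges combined with connectedness and $v^* \in A_2$ then forces $V(G) = A_2$, so $G$ is a connected $2$-regular graph, that is, a cycle $C_n$ with $n \geq 3$. But $C_n$ admits the induced linear forest $P_{n-1}$, which respects $(A, B, C)$ and has $n - 1 \geq \frac{2n}{3} = f(G, A, B, C)$ vertices, contradicting the minimality of $(G, A, B, C)$. I expect this chain of exclusions to be the most delicate part, as it combines most of the earlier claims in a single argument.

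Property (2) then reduces to a direct verification: (1) removes $A_2$ from the list in the first paragraph, and on the remaining cases---$A_d$ for $d \geq 3$, $B_d$ for $d \geq 4$, $C_d$ for $d \geq 4$---the identity $f(v^*;G) = d(v^*)\gamma(v^*)$ follows immediately from the closed forms $f_A(d) = \frac{2}{d+1}$, $f_B(d) = \frac{4}{3(d+1)}$, $f_C(d) = \frac{2}{3(d+1)}$ and the corresponding values of $\gamma$ read from the table. For (3), I would suppose towards a contradiction that every neighbor $w$ of $v^*$ has $\gamma(w) > 0$. The minimality of $\gamma(v^*)$ then forces $\gamma(w) \geq \gamma(v^*)$ for each such $w$, so summing over $N(v^*)$ and using (2) gives $\sum_{w \in N(v^*)}\gamma(w) \geq d(v^*)\gamma(v^*) = f(v^*;G)$, contradicting \Cref{claim:ABC:weight is bigger than gain of neighbors}. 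Hence some neighbor of $v^*$ has zero gain, and since $C_2 = \varnothing$, any such neighbor must lie in $B_3 \cup C_3$, establishing (3).
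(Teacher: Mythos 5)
Your proof is correct and follows essentially the same route as the paper: derive $V(G)=A_2\cup B_3\cup C_3$ from the choice of $v^*$ under the assumption $v^*\in A_2$, eliminate $B_3$ and $C_3$ to reduce $G$ to a cycle and contradict minimality, then verify \eqref{item formula weight v} by a direct computation and deduce \eqref{item neighbor in B3 or C3} from \Cref{claim:ABC:weight is bigger than gain of neighbors} together with the minimality of $\gamma(v^*)$. The only (harmless) deviation is inside the proof of \eqref{item not in A2}: you dispose of $C_3$ first by excluding all possible incident edges and of $B_3$ via connectedness and $v^*\in A_2$, whereas the paper eliminates $B_3$ first using \Cref{claim:ABC:at most a single B_3 in a B_3's neighborhood}; both arguments are valid.
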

\begin{claimproof}
Let us first show that \eqref{item not in A2} implies \eqref{item formula weight v}. 
We already know that $d(v^*) \geq 4$ in case $v^* \in B \cup C$. 
If $v^* \notin A_2$, then it can be checked that $f(v^*;G) = d(v^*)\gamma(v^*)$ holds in all possible cases for $v^*$. 
Thus,  \eqref{item not in A2} implies \eqref{item formula weight v}.  

From \eqref{item formula weight v} and \Cref{claim:ABC:weight is bigger than gain of neighbors}, we obtain that 
$d(v^*)\gamma(v^*) = f(v^*;G) > \sum_{w \in N(v^*)}\gamma(w)$, implying that 
$\gamma(w) < \gamma(v^*)$ holds for some $w \in N(v^*)$, and thus $\gamma(w)=0$ by our choice of $v^*$, implying in turn that $w\in B_3 \cup C_3$. Thus,  \eqref{item formula weight v} implies \eqref{item neighbor in B3 or C3}. 

Hence, it only remains to prove \eqref{item not in A2}. 
Arguing by contradiction, suppose that $v^* \in A_2$. 
Then, it follows from our choice of $v^*$ that $V(G)=A_2 \cup B_3 \cup C_3$. 

If $B_3 \neq \varnothing$, then by \Cref{claim:ABC:no A_2-B_3 edge} and \Cref{claim:ABC:min weight in B_3 neighborhod is > 1/6} every vertex in $B_3$ has all its neighbors in $B_3$. 
However, this contradicts \Cref{claim:ABC:at most a single B_3 in a B_3's neighborhood}. 
Thus, $B_3 = \varnothing$. 

If $C_3 \neq \varnothing$, then every vertex in $C_3$ has all its neighbors in $A_2$ by \Cref{claim:ABC:no C_3--C_3 edge}. 
In particular, there is an edge $xy\in E(G)$ with $x\in C_3$ and $y\in A_2$. 
But then, $f(x;G) > \gamma(y)$ by \Cref{claim:ABC:weight is bigger than gain of neighbors}, which is not possible since $f(x;G) = \gamma(y) = \frac 16$. 
Thus, $C_3 = \varnothing$. 

We conclude that $V(G)=A_2$, and thus that $G$ is a cycle. 
Then $F\coloneqq G - v^*$ is an induced linear forest in $G$ respecting $(A, B, C)$ and with $|V(G)|-1 \geq \frac 23 |V(G)|=f(G, A, B, C)$ vertices, contradicting the fact $G$ and $(A, B, C)$ form a counterexample. 
Therefore,  $v^* \notin A_2$, and \eqref{item not in A2} holds. 
\end{claimproof}

\begin{claim}
\label{claim:ABC:deg v^* at least 4}
$d(v^*) \geq 4$, and thus 
$f(v^*;G) \leq \frac {2}{5}$ and
$\gamma(v^*) \leq \frac {1}{10}$.     
\end{claim}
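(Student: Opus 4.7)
My approach is to argue by contradiction, ruling out in turn $d(v^*) = 2$ and $d(v^*) = 3$; once $d(v^*) \geq 4$ is secured, the bounds $f(v^*;G) \leq \frac{2}{5}$ and $\gamma(v^*) \leq \frac{1}{10}$ follow by direct inspection of the formulas for $f_A, f_B, f_C$, the maxima being attained at $v^* \in A_4$, which yields $f_A(4) = \frac{2}{5}$ and $f_A(3)-f_A(4) = \frac{1}{10}$. The case $d(v^*) = 2$ is immediate from part (1) of \Cref{claim:ABC:properties of v^*}, giving $v^* \notin A_2$, combined with \Cref{claim:ABC:no B_2 and no C_2}, giving $B_2 = C_2 = \varnothing$.

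Now assume for contradiction that $d(v^*) = 3$. The remark preceding the claim forces $v^* \in A_3$, hence $\gamma(v^*) = \frac{1}{6}$. By the minimality of $\gamma(v^*)$ among vertices of positive gain in $G$, every vertex of $G$ has gain $0$ or gain at least $\frac{1}{6}$; reading off \Cref{tab:gains}, together with $\delta(G) \geq 2$ and $B_2 = C_2 = \varnothing$, I deduce $V(G) \subseteq A_2 \cup A_3 \cup B_3 \cup C_3$. Part (3) of \Cref{claim:ABC:properties of v^*} then supplies a neighbor $w$ of $v^*$ lying in $B_3 \cup C_3$, and I split on its class.

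If $w \in C_3$, then \Cref{claim:ABC:min weight in B_3 neighborhod is > 1/6} (which excludes $B_3$--$C$ edges) and \Cref{claim:ABC:no C_3--C_3 edge} force all three neighbors of $w$ into $A_2 \cup A_3$, each of gain $\frac{1}{6}$; \Cref{claim:ABC:weight is bigger than gain of neighbors} applied at $w$ then gives $\frac{1}{6} = f(w;G) > 3 \cdot \frac{1}{6} = \frac{1}{2}$, a contradiction. Otherwise $w \in B_3$, and \Cref{claim:ABC:no A_2-B_3 edge} together with \Cref{claim:ABC:min weight in B_3 neighborhod is > 1/6} confines the three neighbors of $w$ to $A_3 \cup B_3$. \Cref{claim:ABC:weight is bigger than gain of neighbors} at $w$ yields $\frac{1}{3} > \sum_{z \in N(w)} \gamma(z)$; since $v^* \in A_3$ already contributes $\frac{1}{6}$, the remaining two neighbors of $w$ must contribute less than $\frac{1}{6}$ jointly, forcing both into $B_3$ (any contribution from $A_3$ would make the sum at least $\frac{1}{6}$). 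But this gives $w$ two $B_3$-neighbors, contradicting \Cref{claim:ABC:at most a single B_3 in a B_3's neighborhood}. The main obstacle is this final case analysis: one must combine four distinct structural claims to constrain the neighborhoods of $v^*$ and $w$ tightly enough to force a contradiction.
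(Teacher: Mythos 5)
Your proof is correct and takes essentially the same approach as the paper's: after reducing to $v^*\in A_3$ and $V(G)=A_2\cup A_3\cup B_3\cup C_3$, you use the neighbor $w\in B_3\cup C_3$ provided by \Cref{claim:ABC:properties of v^*} together with \Cref{claim:ABC:weight is bigger than gain of neighbors}, \Cref{claim:ABC:no A_2-B_3 edge}, \Cref{claim:ABC:min weight in B_3 neighborhod is > 1/6}, \Cref{claim:ABC:at most a single B_3 in a B_3's neighborhood}, and \Cref{claim:ABC:no C_3--C_3 edge} to derive a contradiction. The only cosmetic differences are that the paper excludes $C_3$-neighbors of $v^*$ directly via \Cref{claim:ABC:weight is bigger than gain of neighbors} and lands its final contradiction on that claim rather than on \Cref{claim:ABC:at most a single B_3 in a B_3's neighborhood}.
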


\begin{claimproof}
If $d(v^*) \geq 4$, then it is easily checked that $f(v^*;G) \leq \frac {2}{5}$ (with equality only if $v^* \in A_4$), and thus $\gamma(v^*) \leq \frac {1}{10}$ since $\gamma(v^*) = f(v^*;G)/d(v^*)$ by \Cref{claim:ABC:properties of v^*}.

Let us show that $d(v^*) \geq 4$. 
Arguing by contradiction, suppose this is not the case. 
Then it follows from \Cref{claim:ABC:properties of v^*} that $v^*\in A_3$. 
Furthermore, by our choice of $v^*$, every vertex of $G$ with nonzero gain is in $A_2 \cup A_3$, and it follows that $V(G) = A_2 \cup A_3 \cup B_3 \cup C_3$. By \Cref{claim:ABC:weight is bigger than gain of neighbors}, we know that $f(w;G) > \gamma(v^*) = \frac 16$ holds for every vertex $w\in N(v^*)$. 
Since $f_C(3) = \frac 16$, it follows that $N(v^*) \cap C_3 = \varnothing$.   
\Cref{claim:ABC:properties of v^*} implies then that there is some vertex $w \in N(v^*) \cap B_3$. 
By \Cref{claim:ABC:min weight in B_3 neighborhod is > 1/6}, we know that $N(w) \subseteq B_3 \cup A_2 \cup A_3$. 
\Cref{claim:ABC:at most a single B_3 in a B_3's neighborhood} implies that $w$ has a neighbor $x$ that is distinct from $v^*$ and not in $B_3$. Thus, $x \in A_2 \cup A_3$. 
\Cref{claim:ABC:no A_2-B_3 edge} implies then that $x \in A_3$. But then $\gamma(x)+\gamma(v^*)=f(w;G)$, which contradicts \Cref{claim:ABC:weight is bigger than gain of neighbors}.
\end{claimproof}

\begin{claim}\label{claim:ABC:If C3 nonempty}
If $C_3 \neq \varnothing$ then $v^* \notin A_4 \cup A_5 \cup B_4$, and in particular, $f(v^*; G) \leq \frac 27$ and $\gamma(v^*) \leq \frac 1{21}$.
\end{claim}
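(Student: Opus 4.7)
The plan is to extract, from an arbitrary $w \in C_3$, a neighbor with strictly positive but small gain, and then read off the consequences from \Cref{tab:gains}.

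The first step would be to show that all three neighbors of $w$ lie in positive-gain classes. Scanning \Cref{tab:gains}, the only vertices $u$ with $\gamma(u) = 0$ belong to $B_3 \cup C_2 \cup C_3$. However, no neighbor of $w$ can lie in $C_3$ by \Cref{claim:ABC:no C_3--C_3 edge}; none can lie in $B_3$ since \Cref{claim:ABC:min weight in B_3 neighborhod is > 1/6} precludes $B_3$--$C$ edges; and $C_2 = \varnothing$ by \Cref{claim:ABC:no B_2 and no C_2}. Hence every neighbor of $w$ has $\gamma > 0$. Applying \Cref{claim:ABC:weight is bigger than gain of neighbors} to $w$ then gives $\sum_{u \in N(w)} \gamma(u) < f(w;G) = \frac{1}{6}$, so some $u_0 \in N(w)$ satisfies $0 < \gamma(u_0) < \frac{1}{18}$, and by the minimality defining $v^*$ we obtain $\gamma(v^*) \leq \gamma(u_0) < \frac{1}{18}$.

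To conclude, I would return to \Cref{tab:gains}: using $\delta(G) \geq 2$ (\Cref{claim:no leaf in G}) together with $C_2 = \varnothing$, the positive values of $\gamma$ strictly below $\frac{1}{18}$ are attained only in $A_d$ with $d \geq 6$ (value $\frac{2}{d(d+1)} \leq \frac{1}{21}$), in $B_d$ with $d \geq 5$ (value $\frac{4}{3d(d+1)} \leq \frac{2}{45}$), or in $C_d$ with $d \geq 4$ (value at most $\frac{1}{30}$). Since the gains of $A_4, A_5, B_4$ are $\frac{1}{10}, \frac{1}{15}, \frac{1}{15}$, all at least $\frac{1}{15} > \frac{1}{18}$, this immediately rules out $v^* \in A_4 \cup A_5 \cup B_4$. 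A short check of the remaining classes using the identity $f(v^*;G) = d(v^*)\gamma(v^*)$ from \Cref{claim:ABC:properties of v^*}\eqref{item formula weight v} then yields $\gamma(v^*) \leq \frac{1}{21}$ and $f(v^*;G) \leq \frac{2}{7}$, with both bounds tight precisely when $v^* \in A_6$. The only delicate part is the first step---correctly assembling the earlier claims to forbid every zero-gain class from $N(w)$; everything afterwards is a straightforward comparison against the gain table.
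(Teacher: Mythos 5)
Your proposal is correct and follows essentially the same route as the paper: the paper also takes a vertex of $C_3$, uses \Cref{claim:ABC:min weight in B_3 neighborhod is > 1/6} and \Cref{claim:ABC:no C_3--C_3 edge} (with $C_2=\varnothing$) to see that all its neighbors have positive gain, applies \Cref{claim:ABC:weight is bigger than gain of neighbors} to get $\gamma(v^*) < \frac1{18}$, and then reads off the conclusion from the gain table. The only cosmetic difference is that you locate a single neighbor of gain below $\frac1{18}$ by averaging and invoke the minimality of $\gamma(v^*)$, whereas the paper bounds the whole sum below by $3\gamma(v^*)$ directly; these are the same argument.
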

\begin{claimproof}
Suppose $v$ is a vertex in $C_3$. 
Then it follows from \Cref{claim:ABC:min weight in B_3 neighborhod is > 1/6} and \Cref{claim:ABC:no C_3--C_3 edge} that $\gamma(w) > 0$ for every neighbor $w$ of $v$. Using \Cref{claim:ABC:weight is bigger than gain of neighbors}, we deduce then that 
\[
\frac{1}{6}=f(v;G) > \sum_{w \in N(v)}\gamma(w) \geq 3\gamma(v^*), 
\]
and thus $\gamma(v^*)< \frac 1{18}$.  
This implies that $v^* \notin A_4 \cup A_5 \cup B_4$, and in particular $f(v^*; G) \leq \frac 27$ and $\gamma(v^*) \leq \frac 1{21}$.
\end{claimproof}

\begin{claim}
\label{claim:ABC:v^* has at most one neighbor in C3}
$\abs {N(v^*) \cap C_3} \leq 1$.
\end{claim}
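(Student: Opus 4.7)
I would argue by contradiction, assuming that $v^*$ has two distinct neighbors $u_1, u_2 \in C_3$; write $N(u_i) = \{v^*, x_i, y_i\}$ for $i = 1,2$. By \Cref{claim:ABC:min weight in B_3 neighborhod is > 1/6} and \Cref{claim:ABC:no C_3--C_3 edge}, none of $x_1, y_1, x_2, y_2$ lies in $B_3 \cup C_3$, so each has positive gain; by the minimality of $\gamma(v^*)$, this forces $\gamma(x_i), \gamma(y_i) \geq \gamma(v^*)$. Applying \Cref{claim:ABC:weight is bigger than gain of neighbors} to $u_1$ then gives $\gamma(v^*) + \gamma(x_1) + \gamma(y_1) < \frac{1}{6}$, hence each of $\gamma(x_1), \gamma(y_1)$ lies in $[\gamma(v^*), \frac{1}{6} - 2\gamma(v^*))$.

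The main step is to apply \Cref{claim:ABC:ILF size is bouded by sum of weights} to the induced linear forest $F \coloneqq G[\{u_1\}]$, which respects $(A,B,C)$ since $u_1 \in C_3$ is isolated in $F$. This yields
\[
1 < \frac{1}{6} + f(v^*;G) + f(x_1;G) + f(y_1;G) - \sum_{w \in N^2(u_1)} \gamma(w).
\]
The key observation is that $u_2 \in N^2(u_1)$ (since $u_1 u_2 \notin E(G)$ by \Cref{claim:ABC:no C_3--C_3 edge}) and that every other neighbor of $v^*$ distinct from $x_1, y_1$ also lies in $N^2(u_1)$. Combining this with \Cref{claim:ABC:weight is bigger than gain of neighbors} applied to $v^*$ itself, the identity $f(v^*;G) = d(v^*)\gamma(v^*)$ from property~(\ref{item formula weight v}) of \Cref{claim:ABC:properties of v^*}, and $\gamma(u_1) = \gamma(u_2) = 0$, a direct computation in the generic case $\{x_1, y_1\} \cap N(v^*) = \varnothing$ would yield
\[
f(x_1;G) + f(y_1;G) > \frac{5}{6} - 2\gamma(v^*).
\]

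The contradiction would come from a matching upper bound. By \Cref{claim:ABC:If C3 nonempty}, $\gamma(v^*) \leq \frac{1}{21}$, so $\frac{5}{6} - 2\gamma(v^*) \geq \frac{31}{42}$. On the other hand, the constraints on $\gamma(x_1), \gamma(y_1)$ derived above restrict $x_1, y_1$ to finitely many vertex classes (in particular excluding $A_2, A_3$, since those have gain $\frac{1}{6}$), and a direct inspection of the tabulated $(f, \gamma)$-values shows that $f(x_1;G) + f(y_1;G) \leq f_A(4) + f_A(6) = \frac{24}{35}$ over all admissible pairs; indeed, at most one of $x_1, y_1$ can be in $A_4$ (otherwise their gains sum to $\frac{1}{5} > \frac{1}{6}$), and among pairs meeting the constraint the maximum is attained by $(A_4, A_6)$. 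Since $\frac{31}{42} > \frac{24}{35}$, this contradicts the lower bound derived above. The main obstacle is the sub-case where $x_1$ or $y_1$ happens to be a neighbor of $v^*$: each such coincidence weakens the lower bound on $\sum_{N^2(u_1)} \gamma(w)$ by an additive $\gamma(v^*)$. I expect to handle this sub-case either by applying the symmetric argument with $F = G[\{u_2\}]$, or by exploiting additional structural constraints on the triangles $v^* u_i x_i$ at $v^*$.
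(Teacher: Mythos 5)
Your proposal has a genuine gap at its central step, the bound $\sum_{w \in N^2(u_1)}\gamma(w) \geq (d(v^*)-2)\gamma(v^*)$ that is needed to turn the inequality from \Cref{claim:ABC:ILF size is bouded by sum of weights} with $F=G[\{u_1\}]$ into $f(x_1;G)+f(y_1;G) > \frac 56 - 2\gamma(v^*)$. That bound requires every neighbor of $v^*$ other than $u_1,u_2$ to have gain at least $\gamma(v^*)$, i.e.\ to lie outside $B_3 \cup C_3$. At this point of the argument nothing rules this out: the claims forbidding $B_3$-neighbors of $v^*$ (\Cref{claim:ABC:v^* has at most one neighbor in B3} and \Cref{claim:ABC:v^* has no neighbor in B3}) come \emph{later} and their proofs use the present claim, and a third $C_3$-neighbor of $v^*$ is exactly what you are trying to exclude, so it cannot be assumed away either. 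Each such zero-gain neighbor costs an additive $\gamma(v^*)$ in your lower bound, and your numerical margin is only $\frac{31}{42}-\frac{24}{35}=\frac{11}{210}$, while $\gamma(v^*)$ may be as large as $\frac1{21}=\frac{10}{210}$; so already two zero-gain neighbors of $v^*$ beyond $u_1,u_2$ (e.g.\ two $B_3$-neighbors, or one $B_3$-neighbor together with one of the adjacency coincidences you mention) destroy the contradiction. The same loss occurs in the symmetric computation at $u_2$, so symmetry does not repair it. In addition, the sub-case $\{x_1,y_1\}\cap N(v^*)\neq\varnothing$ is only sketched as a hope ("I expect to handle this sub-case\dots"); with both coincidences your bound drops to $\frac 56 - 4\gamma(v^*)$, which is below the upper bound $\frac{24}{35}$, and no structural argument excluding that configuration is given.

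For comparison, the paper avoids these difficulties by applying \Cref{claim:ABC:ILF size is bouded by sum of weights} to the two-vertex forest $F=G[\{u,w\}]$ consisting of \emph{both} $C_3$-neighbors (legitimate since $uw\notin E(G)$ by \Cref{claim:ABC:no C_3--C_3 edge}), discarding the $N^2$ gain term entirely. A first application forces $\abs{N(u)\cup N(w)}\geq 5$, so $v^*$ is the only common neighbor and the four outer neighbors $x,y,s,t$ are distinct; a second application, combined with $v^*\notin A_4$ from \Cref{claim:ABC:If C3 nonempty}, pins $x\in A_4$, then $\gamma(v^*)<\frac1{30}$, $f(v^*;G)\leq\frac29$, $f(y;G)\leq\frac27$, and finally $s,t\in A_4$, which contradicts \Cref{claim:ABC:weight is bigger than gain of neighbors} at $w$. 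If you want to pursue your single-vertex variant, you would first need an independent argument controlling the zero-gain neighbors of $v^*$, which is essentially the content the later claims supply and which is unavailable here.
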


\begin{claimproof}
Arguing by contradiction, suppose that $u, w \in N(v^*) \cap C_3$ with $u \neq w$. Then $uw \not \in E(G)$ by \Cref{claim:ABC:no C_3--C_3 edge}. 
Denote by $x, y$ the two neighbors of $u$ distinct from $v^*$, and by $s, t$  the two neighbors of $w$ distinct from $v^*$. 
We may assume without loss of generality that $f(x;G) \geq f(y;G)$, $f(s;G) \geq f(t;G)$ and $f(x;G) \geq f(s;G)$. 
\Cref{claim:ABC:weight is bigger than gain of neighbors} implies that, for every vertex $z \in N(u) \cup N(w)$, we have $\gamma(z) < \frac 16$, and thus $z \notin A_2 \cup A_3$, which implies in turn that $f(z;G) \leq \frac 25$. 
Since $uw \not \in E(G)$, using \Cref{claim:ABC:ILF size is bouded by sum of weights} with $F=G[\{u, w\}]$ we obtain
\[2 < f(u;G) + f(w;G) + \sum_{z \in N(u) \cup N(w)}f(z;G) \leq 2 \cdot \frac 16 + \abs {N(u) \cup N(v)}\frac 25,\]
and thus $\abs {N(u) \cup N(w)} \geq \ceil {\frac {25}6} = 5$. 
Hence, $v^*$ is the only common neighbor of $u$ and $w$, and $x, y, s, t$ are all distinct. 
Furthermore, \Cref{claim:ABC:min weight in B_3 neighborhod is > 1/6} and \Cref{claim:ABC:no C_3--C_3 edge} imply that $\{x, y, s, t\} \cap (B_3 \cup C_3) = \varnothing$. 
In particular, $x, y, s, t$ all have non-zero gain.
Since $C_3 \neq \varnothing$, by \Cref{claim:ABC:If C3 nonempty} we know that $v^* \notin A_4$.
Using \Cref{claim:ABC:ILF size is bouded by sum of weights} with $F=G[\{u,w\}]$ again, we obtain that
\begin{align*}
2 &< 2 \cdot \frac 16 + f(x;G) + f(y;G) + f(s;G) + f(t;G) + f(v^*;G) \\
&\leq \frac 13 + 5\max\{f(x;G), f(y;G), f(s;G), f(t;G), f(v^*;G)\} \\
&= \frac 13 + 5\max\{f(x;G), f(v^*; G)\}.
\end{align*}
(To see the last equality, recall that $f(x;G) \geq f(y;G)$ and $f(x;G) \geq f(s;G) \geq f(t;G)$). 
We deduce that $\max\{f(x;G), f(v^*; G)\} > \frac 13$, thus $\{x, v^*\} \cap A_4 \neq \varnothing$, but since $v^* \notin A_4$ we must have $x \in A_4$ and thus $f(x;G) = \frac 25$ and $\gamma(x) = \frac 1{10}$. 
Applying \Cref{claim:ABC:weight is bigger than gain of neighbors} on $u$ again, we obtain 
\[\frac 16 = f(u;G) > \gamma(x) + \gamma(y) + \gamma(v^*) \geq \gamma(x) + 2\gamma(v^*) = \frac 1{10} + 2\gamma(v^*).\]
Thus, $\gamma(v^*) < \frac 12(\frac 16-\frac 1{10}) = \frac 1{30}$. 
It follows that $v^*$ does not belong to any of the sets $A_4, A_5, A_6, A_7, B_4, B_5, C_4$, and hence $f(v^*;G) \leq \frac 29$. 
Since \Cref{claim:ABC:weight is bigger than gain of neighbors} implies in particular that $\frac 16 = f(u;G) > \gamma(x)+\gamma(y)$, it follows that $\gamma(y) < \frac 16-\gamma(x) = \frac 16-\frac 1{10}=\frac 1{15}$, from which we deduce that $y$ is not in $A_4, A_5, B_4$, and hence $f(y;G) \leq \frac 27$. 
Now, observe that
\begin{align*}
    \frac 29 
    &\geq f(v^*;G) \\
    &> 2 - 2 \cdot \frac 16 - f(x;G) - f(y;G) - (f(s;G)+f(t;G)) \\
    &\geq \frac 53 - \frac 25 - \frac 27 - \left(f(s;G)+f(t;G)\right) \\
    &= \frac {103}{105} - \left(f(s;G)+f(t;G)\right),
\end{align*}
which implies
\[f(s;G)+f(t;G) > \frac {103}{105} - \frac 29 = \frac {239}{315} > \frac 23.\]
In particular, $f(s;G) > \frac 13$, and thus $s \in A_4$ (since $s\notin A_2\cup A_3$), and $f(s;G) = \frac 25$. 
Furthermore, $f(t;G) > \frac {239}{315}-\frac 25 = \frac {113}{315} > \frac 13$ implying $t \in A_4$.
However, applying \Cref{claim:ABC:weight is bigger than gain of neighbors} on $w$ we obtain
\[\frac 16 = f(w;G) > \gamma(s) + \gamma(t) = 2 \cdot \frac 1{10} = \frac 15,\]
which is a contradiction.
\end{claimproof}

\begin{claim}
\label{claim:ABC: no A4-B3-B3 triangle with v^* in A4}
If $v^* \in A_4$ and $v^*$ is in a triangle $u, v^*, w$ in $G$, then at least one of $u, w$ is not in $B_3$. 
\end{claim}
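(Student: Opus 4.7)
The plan is to derive a contradiction from assuming $u, w \in B_3$. Let $u'$ and $w'$ denote the third neighbors of $u$ and $w$ respectively (so $N(u) = \{v^*, w, u'\}$ and $N(w) = \{v^*, u, w'\}$). By \Cref{claim:ABC:at most one common neighbor between B_3's} we have $u' \neq w'$; by \Cref{claim:ABC:min weight in B_3 neighborhod is > 1/6}, neither $u'$ nor $w'$ lies in $C$; and by \Cref{claim:ABC:at most a single B_3 in a B_3's neighborhood} applied to $u$ (whose sole $B_3$-neighbor among $\{v^*, w, u'\}$ is $w$), we get $u' \notin B_3$, and analogously $w' \notin B_3$. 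Combined with the fact that $\gamma(v^*) = \tfrac{1}{10}$ is the minimum positive gain (so every vertex has $\gamma \in \{0\} \cup [\tfrac{1}{10}, \infty)$), together with \Cref{claim:ABC:If C3 nonempty} and the earlier absence of $B_2, C_2$, this forces $u', w' \in A_2 \cup A_3 \cup A_4$.

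The key estimate is obtained by applying \Cref{claim:ABC:ILF size is bouded by sum of weights} to the induced edge $F := G[\{u, w\}]$, which respects $(A, B, C)$ since $u, w \in B$ and $d_F(u) = d_F(w) = 1$. Observe that $N[\{u, w\}] = \{u, w, v^*, u', w'\}$ is a set of exactly $5$ distinct vertices. Thus
\[
2 < f(u; G) + f(w; G) + f(v^*; G) + f(u'; G) + f(w'; G) = \tfrac{16}{15} + f(u'; G) + f(w'; G),
\]
so $f(u'; G) + f(w'; G) > \tfrac{14}{15}$. Moreover, letting $x, y$ be the other two neighbors of $v^*$, the vertices $\{x, y\} \setminus \{u', w'\}$ lie in $N^2(\{u, w\})$ (via $v^*$), so the bound strengthens to $f(u'; G) + f(w'; G) > \tfrac{14}{15} + \sum_{z \in N^2(\{u, w\})}\gamma(z)$.

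Since the possible values of $f$ on $A_2, A_3, A_4$ are $\tfrac{2}{3}, \tfrac{1}{2}, \tfrac{2}{5}$, the combinations $(u', w') \in \{(A_4, A_4), (A_3, A_4), (A_4, A_3)\}$ are immediately ruled out, as they give $f(u'; G) + f(w'; G) \leq \tfrac{9}{10} < \tfrac{14}{15}$. The remaining combinations are then excluded through a graph modification in the spirit of the proof of \Cref{claim:ABC:no A_3 adjacent to two B_3's}: set $G' := G - v^* + xy$ (adding the edge $xy$ only when $xy \notin E(G)$) with the partition of $V(G')$ obtained by restricting $(A, B, C)$ and demoting $u, w$ (and, as needed, $x, y$) to $C'$. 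The edges $uw, xy \in E(G')$ together with these demotions force at most one of $u, w$ and at most one of $x, y$ to lie in $V(F')$, so $F := G[V(F') \cup \{v^*\}]$ satisfies $d_F(v^*) \leq 2$ and is a valid induced linear forest in $G$ respecting $(A, B, C)$. Using \Cref{claim:ABC:demoting and lowering degree costs at most 1/6} to bound each demotion cost and the exact $f$-values in each remaining subcase, one verifies $|V(F)| > f(G, A, B, C)$, contradicting the minimality of the counterexample.

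The main obstacle is the delicate bookkeeping in the subcases where the naive demotion of all four vertices $u, w, x, y$ produces a total change of exactly $-\tfrac{16}{15}$, falling short of the required $-1$ by $\tfrac{1}{15}$; to close this gap one has to either incorporate one of $u, w$ into $F$ (using the refined bounds on $f(u'), f(w')$ to verify the necessary auxiliary constraints $u' \notin V(F')$ or $w' \notin V(F')$) or tailor the demotion to the actual class of each neighbor, exploiting the fact that demoting a vertex from $A_4$ while lowering its degree costs only $\tfrac{1}{15}$ rather than $\tfrac{1}{6}$.
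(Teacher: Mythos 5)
Your opening follows the paper's first step: applying \Cref{claim:ABC:ILF size is bouded by sum of weights} to $F=G[\{u,w\}]$ and using $f(v^*;G)=\frac 25$ to get $f(u';G)+f(w';G)>\frac{14}{15}$ is exactly the paper's estimate. However, you overlook that \Cref{claim:ABC:no A_2-B_3 edge} already forbids $u',w'\in A_2$ (they are neighbors of the $B_3$-vertices $u,w$), so together with your $\frac{14}{15}$ bound the \emph{only} surviving case is $u',w'\in A_3$; carrying the $A_2$ combinations along hides where the real difficulty sits.

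The genuine gap is your endgame for that case, and it is twofold. First, the construction $G'\coloneqq G-v^*+xy$ (with $x,y$ the other two neighbors of $v^*$) does not do what you claim: if $x,y$ are demoted by one rank only, the edge $xy$ does \emph{not} force at most one of them into $V(F')$ (they may form a $K_2$ component of $F'$), so $d_F(v^*)$ can reach $3$, and if $xy\in E(G)$ then $v^*,x,y$ would even form a triangle in $F$; forcing at most one of $x,y$ requires placing both in $C'$, a two-rank move whose cost (at least $\frac 7{30}$, and $\frac 13$ when the vertex is in $A_3$) is not covered by \Cref{claim:ABC:demoting and lowering degree costs at most 1/6} and ruins the ledger. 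Second, even granting the construction, you concede that the accounting comes out to $-\frac{16}{15}$ against the $+1$ gained by adding $v^*$, i.e.\ it misses by $\frac 1{15}$, and neither of your proposed repairs is actually carried out --- so the decisive case is not proved. The paper closes it quite differently: the second application of \Cref{claim:ABC:ILF size is bouded by sum of weights} to $G[\{u,w\}]$ gives $\sum_{z\in N^2(\{u,w\})}\gamma(z)<\frac 1{15}$, hence $N^2(\{u,w\})\subseteq B_3\cup C_3$ (every nonzero gain is at least $\gamma(v^*)=\frac 1{10}$); then \Cref{claim:ABC:no A_3 adjacent to two B_3's} applied to $u',w'\in A_3$ forces $N(u')=\{u,v^*,w'\}$ and $N(w')=\{w,v^*,u'\}$, so $V(G)=\{u,v^*,w,u',w'\}$, $f(G,A,B,C)=2+\frac 1{15}<3$, and the induced path on $\{u',w',w\}$ (which respects $(A,B,C)$) gives the contradiction. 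Your proposal contains no complete substitute for this structure-forcing argument.
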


\begin{claimproof}
Arguing by contradiction, suppose that $v^*\in A_4$ and that $u, v^*, w$ is a triangle in $G$ with $u, w\in B_3$. 
Let $x$ be the neighbor of $u$ distinct from $v^*$ and $w$, and let $y$ be the neighbor of $w$ distinct from $v^*$ and $u$.  
By \Cref{claim:ABC:at most one common neighbor between B_3's}, we know that $x \neq y$.  
\Cref{claim:ABC:no A_2-B_3 edge} implies that $x, y \notin A_2$. 
It follows that $f(x;G) \leq \frac 12$ and $f(y;G) \leq \frac 12$. 

Since $f(v^*;G) = \frac {2}{5}$, using \Cref{claim:ABC:ILF size is bouded by sum of weights} with $F=G[\{u, w\}]$, we obtain that 
\begin{align*}
2 &< f(u;G)+f(w;G)+f(x;G)+f(y;G)+f(v^*;G) \\
&= \frac 23 + f(x;G)+f(y;G)+f(v^*;G) \\
&= \frac {16}{15} +f(x;G)+f(y;G),
\end{align*}
and thus $f(x;G)+f(y;G) > \frac {14}{15}$.
Since $x, y \notin A_2$, we thus must have $x, y\in A_3$.  

Reapplying \Cref{claim:ABC:ILF size is bouded by sum of weights} with $F=G[\{u, w\}]$, and letting $Z \coloneqq N^2(V(F))$, we obtain
$2 < 2 + \frac 1{15} - \sum_{z \in Z}\gamma(z)$, 
that is, \[\sum_{z \in Z}\gamma(z) < \frac 1{15}.\] 
Since every vertex with non-zero gain has gain at least $\gamma(v^*)=\frac 1{10}$, we deduce that $Z$ is a (possibly empty) subset of $B_3 \cup C_3$. 
Since $x \in A_3$, \Cref{claim:ABC:weight is bigger than gain of neighbors} and \Cref{claim:ABC:no A_3 adjacent to two B_3's} imply that $(N(x) \setminus \{u\}) \cap (B_3 \cup C_3) = \varnothing$. 
Since $N(x) \subseteq N[V(F)] \cup Z$, and since $v^*, x, y$ are the only vertices of $N[V(F)] \cup Z$ not in $B_3 \cup C_3$, it follows that $N(x) = \{u, v^*, y\}$. By symmetry, we deduce that $N(y) = \{w, v^*, x\}$. But then $V(G) = \{u, v^*, w, x, y\}$, and
\begin{align*}
f(G, A, B, C) &= f(u;G)+f(w;G)+f(v^*;G)+f(x;G)+f(y;G) \\
&= 2f_B(3)+f_A(4)+2f_A(3) \\
&= 2 + \frac 1{15} \\
&< 3.
\end{align*}
However, $G[\{x, y, w\}]$ is an induced linear forest of $G$ with three vertices respecting $(A, B, C)$, contradicting the fact that $G$ and $(A, B, C)$ form a counterexample.
\end{claimproof}

\begin{claim}
\label{claim:ABC:v^* has at most one neighbor in B3}
$\abs {N(v^*) \cap B_3} \leq 1$.
\end{claim}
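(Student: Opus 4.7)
The plan is to argue by contradiction: assume $v^*$ has two distinct neighbors $u, w \in N(v^*) \cap B_3$. The earlier claims narrow the setup substantially. By \Cref{claim:ABC:deg v^* at least 4}, $d(v^*) \geq 4$; by \Cref{claim:ABC:at most one common neighbor between B_3's}, $N(u) \cap N(w) = \{v^*\}$; and by \Cref{claim:ABC:no A_2-B_3 edge} and \Cref{claim:ABC:min weight in B_3 neighborhod is > 1/6}, all outer neighbors of $u$ and $w$ lie in $A_{\geq 3} \cup B_{\geq 3}$ and hence have $f$-value at most $\tfrac 12$. I then split into two cases depending on whether $uw \in E(G)$.

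In the triangle case ($uw \in E(G)$), let $x$ and $y$ be the respective third neighbors of $u$ and $w$. \Cref{claim:ABC: no A4-B3-B3 triangle with v^* in A4} forces $v^* \notin A_4$, so since $d(v^*) \geq 4$ we have $f(v^*) \leq \tfrac 13$. Applying \Cref{claim:ABC:ILF size is bouded by sum of weights} to the induced edge $F = G[\{u, w\}]$ (which respects $(A, B, C)$ because $d_F(u) = d_F(w) = 1$ and $u, w \in B$) yields $2 < f(u) + f(w) + f(v^*) + f(x) + f(y) \leq 1 + f(x) + f(y)$, whence $f(x) + f(y) > 1$. But $f(x), f(y) \leq \tfrac 12$ gives $f(x) + f(y) \leq 1$, a contradiction.

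In the non-triangle case ($uw \notin E(G)$), write $N(u) = \{v^*, x_1, x_2\}$ and $N(w) = \{v^*, y_1, y_2\}$, with these five outer vertices pairwise distinct. My plan is to pass to the smaller graph $G' \coloneqq G - v^* + uw$ with the partition $(A, B, C)$ restricted to $V(G')$: in $G'$, $u$ and $w$ still have degree $3$ (and rank $B$), so $f(u; G')=f(u; G)$, $f(w; G')=f(w; G)$, and $f(G') - f(G) = -f(v^*) + \sum_{z \in S} \gamma(z)$ where $S \coloneqq N(v^*) \setminus \{u, w\}$. Minimality produces an induced linear forest $F'$ in $G'$ respecting the restricted partition with $|V(F')| \geq f(G')$. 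The favorable subcase is when $u, w \in V(F')$: the degree constraints $d_{F'}(u), d_{F'}(w) \leq 1$ combined with the new edge $uw$ force the $F'$-component containing $u$ to be exactly the edge $uw$, so in $G[V(F')]$ both $u$ and $w$ become isolated. Then $F \coloneqq G[V(F') \cup \{v^*\}]$ adds the induced path $u$--$v^*$--$w$ and gives an induced linear forest respecting $(A, B, C)$ of size $\geq f(G') + 1 \geq f(G) - f(v^*) + 1 > f(G)$, the desired contradiction.

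The main obstacle is to rule out the remaining configurations for $F'$: either $u$ or $w$ is absent from $V(F')$, or $V(F') \cap S \neq \varnothing$ (which would push $d_F(v^*)$ above $2$). My strategy for these is to refine the partition $(A', B', C')$ by suitably demoting vertices of $S$ (and possibly of $\{x_1, x_2, y_1, y_2\}$) so that the structure of $F'$ is sufficiently constrained, and, if that modification is still not sufficient, to fall back on a direct application of \Cref{claim:ABC:ILF size is bouded by sum of weights} to $F = G[\{u, v^*, w\}]$, combined with the inequalities $\gamma(x_1) + \gamma(x_2) < \tfrac 13 - \gamma(v^*)$ and $\gamma(y_1) + \gamma(y_2) < \tfrac 13 - \gamma(v^*)$ from \Cref{claim:ABC:weight is bigger than gain of neighbors} (which, together with the minimum-gain property of $v^*$, tightly restrict the possible ranks of the $x_i, y_j$) to push the right-hand side of the ABC ILF inequality strictly below $3$.
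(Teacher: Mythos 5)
Your triangle case ($uw \in E(G)$) is correct and is essentially the paper's Case~1: the same application of \Cref{claim:ABC:ILF size is bouded by sum of weights} to $F=G[\{u,w\}]$ together with $f(v^*;G)\leq \tfrac13$ (from \Cref{claim:ABC:deg v^* at least 4} and \Cref{claim:ABC: no A4-B3-B3 triangle with v^* in A4}) and the fact that the outer neighbours have $f$-value at most $\tfrac12$. The non-triangle case, however, is where essentially all of the difficulty of this claim sits, and there your argument has a genuine gap. Even your ``favorable subcase'' is incomplete: to add $v^*$ to $F'$ with $d_F(v^*)=2$ you need $v^*\in A$, but nothing proved so far excludes $v^*\in B_{\geq 5}$ or $v^*\in C_{\geq 4}$ (only $A_2$, and, when $C_3\neq\varnothing$, $A_4\cup A_5\cup B_4$, have been ruled out), so the extended forest need not respect $(A,B,C)$. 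The same unstated assumption is needed for your fallback forest $G[\{u,v^*,w\}]$ to respect the partition.

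More seriously, the ``remaining configurations'' ($u$ or $w$ missing from $F'$, or $V(F')\cap S\neq\varnothing$) are dismissed with a plan rather than a proof, and the plan as stated does not work. Applying \Cref{claim:ABC:ILF size is bouded by sum of weights} to $F=G[\{u,v^*,w\}]$ puts all of $S=N(v^*)\setminus\{u,w\}$ inside $N[V(F)]$, and the $f$-values of these vertices are completely uncontrolled (each can be as large as $f_A(2)=\tfrac23$; there is no claim forbidding $A_2$-neighbours of $v^*$): already for $d(v^*)=4$, $v^*\in A_4$, the right-hand side is bounded only by roughly $4$, far above $3$, and the gain inequalities $\gamma(x_1)+\gamma(x_2)<\tfrac13-\gamma(v^*)$ constrain the $x_i,y_j$ but never the vertices of $S$, which is where the slack is. Demoting $S$ to force a good structure is also not affordable in general: demoting an $A_2$-vertex while its degree drops by one costs $f_A(2)-f_C(1)=\tfrac12$, and with $|S|\geq 2$ this eats the entire surplus $1-f(v^*)$. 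The paper's actual argument is of a different shape: it deletes $v^*$, adds the edges $x_1x_2$ and $y_1y_2$ among the outer neighbours of $u$ and of $w$ (not the edge $uw$), and \emph{promotes} $u,w$ to $A$, so that \emph{every} respecting forest of the modified graph is already a respecting forest of $G$; a loss analysis then forces $v^*\in A_4$, one outer neighbour of $u$ in $A_3$ and the other in $B_3$, after which a second modification $G-u+qr$ with a promotion of that $B_3$-vertex produces the contradiction. None of this machinery is recoverable from your sketch, so the claim is not yet proved.
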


\begin{claimproof}
Arguing by contradiction, suppose that $u, w \in N(v^*) \cap B_3$ with $u \neq w$.

First, suppose that $uw \in E(G)$. 
Let $x$ be the neighbor of $u$ distinct from $v^*$ and $w$, and let $y$ be the neighbor of $w$ distinct from $v^*$ and $u$.  
By \Cref{claim:ABC:at most one common neighbor between B_3's}, we know that $x \neq y$.  
Also, $f(v^*;G) \leq \frac 13$ since $d(v^*)\geq 4$ (by \Cref{claim:ABC:deg v^* at least 4}) and $v^* \notin A_4$ (by \Cref{claim:ABC: no A4-B3-B3 triangle with v^* in A4}).  
Using \Cref{claim:ABC:ILF size is bouded by sum of weights} with $F=G[\{u, w\}]$ we then obtain that 
\begin{align*}
2 &< f(u;G)+f(w;G)+f(x;G)+f(y;G)+f(v^*;G) \\
&= \frac 23 + f(x;G)+f(y;G)+f(v^*;G) \\
&\leq 1 +f(x;G)+f(y;G),
\end{align*}
and thus $f(x;G)+f(y;G) > 1$. 
However, this implies that at least one of $x, y$ is in $A_2$, which contradicts \Cref{claim:ABC:no A_2-B_3 edge}. 
Therefore, we conclude that $uw \not \in E(G)$.

Let $x, y$ be the two neighbors of $u$ distinct from $v^*$, and let $s, t$ be  the two neighbors of $w$ distinct from $v^*$. 
We may assume without loss of generality that $f(x;G) \geq f(y;G)$ and $f(s;G) \geq f(t;G)$, and also $\lambda(x)+\lambda(y) \geq \lambda(s)+\lambda(t)$. 
\Cref{claim:ABC:at most one common neighbor between B_3's} implies that $x, y, s, t$ are all distinct. 
\Cref{claim:ABC:no A_2-B_3 edge} implies that $x, y \notin A_2$. 
It follows that $f(x;G) \leq \frac 12$ and $\gamma(x) \leq \frac 16$, and these two inequalities are strict in case $x \notin A_3$. 
The same holds for vertex $y$. 
By \Cref{claim:ABC:weight is bigger than gain of neighbors}, we know that $\gamma(x)+\gamma(y) < f(u;G)= \frac 13$, thus we cannot have both $x$ and $y$ in $A_3$, and hence $y \notin A_3$ (since $f(y;G) \leq f(x;G)$). 
It follows that $\lambda(x)+\lambda(y) \leq \frac 1{10}+\frac 1{15} = \frac 16$, and also  $\lambda(s)+\lambda(t) \leq \lambda(x)+\lambda(y) \leq \frac 16$. 

Let $G' \coloneqq G-v^*+xy+st$. 
(Note that the edges $xy$ and $st$ might already be in $G$.) 
Let $(A', B', C')$ be obtained from the restriction of $(A, B, C)$ to $V(G')$ by promoting $u$ and $w$ (thus $u, w \in A'_2$). 
Observe that every induced linear forest $F'$ in $G'$ respecting  $(A', B', C')$ is also an induced linear forest in $G$ respecting  $(A, B, C)$. 
(Indeed, the triangle $u, x, y$ in $G'$ ensures that $F'$ misses at least one of these three vertices, and same for the triangle $w, s, t$ in $G'$.) 
Since the lemma holds true for $G'$ and  $(A', B', C')$, it follows that
\[f(G, A, B, C) > f(G', A', B', C') \geq f(G, A, B, C) - f(v^*;G) - \lambda(x)-\lambda(y)-\lambda(s)-\lambda(t) + \frac 23,\] 
where the rightmost inequality holds with equality in case $xy, st \notin E(G)$. 
That is, 
\[\frac 23 - f(v^*;G) < \lambda(x)+\lambda(y)+\lambda(s)+\lambda(t).\]
Using that $\lambda(s)+\lambda(t) \leq \lambda(x)+\lambda(y) \leq \frac 16$, we deduce that $f(v^*;G) >  \frac 13$, 
and thus $v^* \in A_4$ and $f(v^*;G)= \frac 25$.  
Also, 
\[\frac 4{15} = \frac 23 - f(v^*;G) < \lambda(x)+\lambda(y)+\lambda(s)+\lambda(t) \leq 2\left(\lambda(x)+\lambda(y)\right),\]
implying that $\lambda(x)+\lambda(y) > \frac 2{15}$. 
In particular, $\lambda(z) > \frac 1{15}$ holds for some $z \in \{x, y\}$, and it can then be checked that $z\in A_3$ is the only possibility for $z$ since $x,y \notin A_2$. 
Since $y\notin A_3$, we must have $z=x$.  
Since $v^* \in A_4$, we know from \Cref{claim:ABC:weight is bigger than gain of neighbors} that $\frac 13 = f(u;G) > \gamma(v^*) + \gamma(x) + \gamma(y) = 
\frac 1{10} + \frac 16 + \gamma(y)$, and thus $\gamma(y) < \frac 13 - \frac 16 - \frac 1{10} = \frac 1{15} < \gamma(v^*)=\frac 1{10}$. 
By our choice of $v^*$, this implies that $\gamma(y)=0$, and thus $y \in B_3 \cup C_3$. Since \Cref{claim:ABC:min weight in B_3 neighborhod is > 1/6} ensures $y \not \in C$, we deduce that $y \in B_3$. 
\Cref{claim:ABC:no A_3 adjacent to two B_3's} implies in turn that $xy \notin E(G)$.  
Moreover, $v^*y \notin E(G)$ by \Cref{claim:ABC: no A4-B3-B3 triangle with v^* in A4}. 
It follows that $y$ has exactly two neighbors outside of $\{x, u, v^*\}$.
Denote these two neighbors by $q$ and $r$ in such a way that $f(q) \geq f(r)$. (Note that $\{q, r\} \cap \{s, t\}$ is not necessarily empty.) 

Since $u\in B_3$, \Cref{claim:ABC:at most a single B_3 in a B_3's neighborhood} applied on $y$ implies that $q, r \notin B_3$. 
Also, $q, r \notin C$ by \Cref{claim:ABC:min weight in B_3 neighborhod is > 1/6}. 
Thus, $\gamma(q) > 0$ and $\gamma(r) > 0$, and hence $\gamma(q) \geq \gamma(v^*) =\frac 1{10}$ and $\gamma(r) \geq \gamma(v^*) =\frac 1{10}$ by our choice of $v^*$. 
This implies in turn that $q, r\notin B$, and hence $q, r\in A$. 
Since $q, r \notin A_2$ by \Cref{claim:ABC:no A_2-B_3 edge}, we must have $q, r \in A_3 \cup A_4$. 
In particular, $\lambda(q)+\lambda(r) \leq 2 \cdot \frac 1{10}$. 

Now, let $G'' \coloneqq G-u+qr$ (note that the edge $qr$ may already exist in $G$) and let $(A'', B'', C'')$ be the partition $(A, B, C)$ restricted to $V(G'')$ where the vertex $y$ is promoted. 
Since $|V(G'')| < |V(G)|$, there is an induced linear forest $F''$ in $G''$ respecting $(A'', B'', C'')$ with at least $f(G'',A'', B'', C'')$ vertices.  
Observe that $F''$ is also an induced linear forest in $G$ respecting $(A, B, C)$, thanks to the triangle $y, q, r$ in $G''$.  
It follows that 
\begin{align*}
|V(F'')| &\geq f(G'',A'', B'', C'') \\
&= f(G, A, B, C) - f(u; G) + \gamma(v^*) + \gamma(x) + (f(y; G'') - f(y; G)) - (\lambda(q)+\lambda(r)) \\
&\geq f(G, A, B, C)-\frac 13 + \frac 1{10} + \frac 16 + \left(\frac 23-\frac 13\right) - 2 \cdot \frac 1{10} \\ 
&\geq f(G, A, B, C),
\end{align*}
contradicting the fact that $G$ and $(A, B, C)$ form a counterexample. 
This concludes the proof of the claim. 
\end{claimproof}

\begin{claim}\label{claim:ABC:v^* has no neighbor in B3}
$N(v^*) \cap B_3 = \varnothing$.
\end{claim}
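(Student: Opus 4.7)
Suppose for contradiction that $u\in N(v^*)\cap B_3$; by \Cref{claim:ABC:v^* has at most one neighbor in B3} this $u$ is unique. Let $x,y$ be the two neighbors of $u$ distinct from $v^*$. By \Cref{claim:ABC:no A_2-B_3 edge} and \Cref{claim:ABC:min weight in B_3 neighborhod is > 1/6}, $x,y\notin A_2\cup C$, and by \Cref{claim:ABC:weight is bigger than gain of neighbors} applied to $u$, $\gamma(v^*)+\gamma(x)+\gamma(y)<\tfrac 13$; in particular, at most one of $x,y$ lies in $A_3$ (else their gains alone would sum to $\tfrac 13$), and consequently $\lambda(x)+\lambda(y)\leq \tfrac 1{10}+\tfrac 1{15}=\tfrac 16$.

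The construction is to set $G'\coloneqq G-v^*+xy$ (adding the edge $xy$ only if it is not already in $E(G)$) and to let $(A',B',C')$ be obtained from the restriction of $(A,B,C)$ to $V(G')$ by promoting $u$ to $A'$. The triangle $u,x,y$ in $G'$ ensures that any induced linear forest $F'$ in $G'$ respecting $(A',B',C')$ omits at least one of $u,x,y$; after removing the (possibly added) edge $xy$ from $F'$, one checks that $F'$ is also an induced linear forest in $G$ respecting $(A,B,C)$. By the minimality of our counterexample, such an $F'$ exists with $|V(F')|\geq f(G',A',B',C')$, so it suffices to derive $f(G',A',B',C')\geq f(G,A,B,C)$.

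In the generic case where $x,y\notin N(v^*)$ and $xy\notin E(G)$, a direct computation gives
\[
f(G,A,B,C)-f(G',A',B',C') \;=\; f(v^*;G) - \tfrac 13 - \sum_{z\in N(v^*)\setminus\{u\}}\gamma(z) + \lambda(x) + \lambda(y),
\]
with analogous formulas in the non-generic sub-cases (when $v^*$ is adjacent to $x$ or $y$, or when $xy\in E(G)$). Using $f(v^*;G)=d(v^*)\gamma(v^*)$ from \Cref{claim:ABC:properties of v^*}, the minimality of $\gamma(v^*)$ among positive-gain vertices, \Cref{claim:ABC:v^* has at most one neighbor in C3} (so at most one vertex of $N(v^*)\setminus\{u\}$ has zero gain), and \Cref{claim:ABC:If C3 nonempty} (which forbids a $C_3$-neighbor of $v^*$ whenever $v^*\in A_4\cup A_5\cup B_4$), one checks in every admissible class for $v^*$ that the right-hand side is at most $0$, which gives the desired contradiction.

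\textbf{Main obstacle.} The tightest instance is $v^*\in A_4$: then $f(v^*;G)=\tfrac 25$, $\lambda(x)+\lambda(y)$ may reach $\tfrac 16$, and the three gain terms $\gamma(z)$ sum to at least $\tfrac 3{10}$ (since $C_3=\varnothing$ in this case), so the inequality reduces to $\tfrac 13+\tfrac 3{10}\geq \tfrac 25+\tfrac 16$, i.e.\ $\tfrac{19}{30}\geq \tfrac{17}{30}$, which succeeds by a narrow margin. The bulk of the remaining work is the per-case verification across the various possible types of $v^*$ and the bookkeeping for the non-generic adjacency situations, which all reduce cleanly to similar arithmetic checks.
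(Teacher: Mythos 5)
Your construction is exactly the paper's: delete $v^*$, add the edge $xy$ between the two other neighbors of the $B_3$-neighbor $u$, promote $u$ to $A$, and transfer the forest back via the triangle $u,x,y$. Where you genuinely differ is in how the counting is closed. The paper does not prove $f(G',A',B',C')\geq f(G,A,B,C)$ directly in all configurations: it first rules out $x,y\in N(v^*)$ through two separate contradiction arguments, then uses the resulting inequality only to pin down $v^*\in A_4$ and $x\in A_3$, and obtains the final contradiction by applying \Cref{claim:ABC:weight is bigger than gain of neighbors} to $u$ together with the minimality of $\gamma(v^*)$ (via $\gamma(y)\geq\gamma(v^*)$). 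You instead close every configuration by a uniform arithmetic estimate: writing the deficit as $(1+m+c)\gamma(v^*)+\delta_x+\delta_y\leq \frac13$, where $m=|N(v^*)\cap\{x,y\}|$, $c\in\{0,1\}$ records a possible $C_3$-neighbor, and $\delta_x,\delta_y$ are $\leq\lambda$ or $\leq 0$ according to adjacency, the inputs you cite (minimality of $\gamma(v^*)$, \Cref{claim:ABC:v^* has at most one neighbor in C3}, $x,y\notin A_2\cup C$ and at most one of them in $A_3$ so $\lambda(x)+\lambda(y)\leq\frac16$, and $C_3=\varnothing$ when $v^*\in A_4\cup A_5\cup B_4$ from \Cref{claim:ABC:If C3 nonempty}) do verify this in all cases; I checked the sub-cases you defer, and the tightest ones are in fact $v^*\in A_4$ with $m\in\{1,2\}$, which pass with margin $\frac1{30}$ (slightly tighter than the generic $A_4$ margin of $\frac1{15}$ you highlight), precisely because adjacency of $x$ or $y$ to $v^*$ turns a $+\lambda$ term into a nonpositive one while costing at most one $\gamma(v^*)$ from the sum. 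So your route is correct and arguably cleaner: it avoids the paper's exclusion of $x,y\in N(v^*)$ and its final appeal to the gain inequality at $u$, at the price of a small explicit case table over the admissible classes of $v^*$; to make it a complete proof you would just need to write out that table and the non-generic accounting you sketch.
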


\begin{claimproof}
Arguing by contradiction, suppose that $N(v^*) \cap B_3$ is non empty. By \Cref{claim:ABC:v^* has at most one neighbor in B3}, there exists a unique vertex $w$ in  $N(v^*) \cap B_3$. 
Let $x, y$ denote the neighbors of $w$ distinct from $v^*$.
Similarly to the proof of \Cref{claim:ABC:min weight in B_3 neighborhod is > 1/6}, let $G' := G - v^* + xy$ and define the partition $(A', B', C')$ as the restriction of $(A, B, C)$ to $V(G')$ where $w$ is moved to $A$ (hence $w \in A'_2$). 
Observe that every induced linear forest in $G'$ respecting $(A', B', C')$ is an induced linear forest in $G$ respecting $(A, B, C)$; indeed, if $w$ is included in such a forest, then at most one of $x,y$ is included as well, because of the edge $xy$. 
Hence, $f(G, A, B, C) > f(G', A', B', C')$ (by minimality of the counterexample), and it follows that
\begin{align*}
    0 &> f(G', A', B', C') - f(G, A, B, C) \\
    &= (f_A(2) - f_B(3)) - f(v^*; G) + (f(x; G') - f(x; G)) + (f(y; G') - f(y; G)) + \sum_{u \in N(v^*) \setminus \{w, x, y\}}\hspace{-.75cm}\gamma(u) \\
    &= \frac 13 - f(v^*; G) + (f(x; G') - f(x; G)) + (f(y; G') - f(y; G)) + \sum_{u \in N(v^*) \setminus \{w, x, y\}}\gamma(u).
\end{align*}
Define $U \coloneqq \{u \in N(v^*) \setminus \{w, x, y\} \st \gamma(u) > 0\}$. 
It follows that
\begin{equation}
\label{eq:claim19_beginning}
\frac 25 \geq f(v^*; G) > \frac 13 + (f(x; G') - f(x; G)) + (f(y; G') - f(y; G)) + \abs U \cdot \gamma(v^*),
\end{equation}
where the first inequality comes from \Cref{claim:ABC:deg v^* at least 4}.

If $f(x; G')-f(x; G) \geq 0$ and $f(y; G')-f(y; G) \geq 0$, we deduce that
\[\frac 25 \geq f(v^*; G) > \frac 13 + \abs U \cdot \gamma(v^*) \geq \frac 13,\]
thus $v^* \in A_4$, and hence $\gamma(v^*) = \frac{1}{10}$, and it follows that $\abs U < (\frac 25 - \frac 13)/\gamma(v^*) = \frac 23$, and therefore $\abs U=0$. 
Recall that $v^*$ has exactly one neighbor in $B_3$, namely $w$, at most one neighbor in $C_3$ (by \cref{claim:ABC:v^* has at most one neighbor in C3}), and all its remaining neighbors $u$ satisfy  $\gamma(u) > 0$. 
Since $\abs U=0$, it follows that $N(v^*) = \{w, x, y, s\}$ for some $s\in C_3$. 
But then \Cref{claim:ABC:If C3 nonempty} implies that $v^* \notin A_4$, which is a contradiction.

Hence, at least one of $f(x; G')-f(x; G)$ and $f(y; G')-f(y; G)$ must be negative.
Without loss of generality, suppose that $f(x; G')-f(x; G) < 0$.
This means that $d_{G'}(x) = d_G(x)+1$, hence $xy \notin E(G)$ and $v^*x \notin E(G)$.

We also know that $f(x; G')-f(x; G) = -\lambda(x)$ and $f(y; G')-f(y; G) \in \{0, -\lambda(y)\}$ depending on whether $v^*y \in E(G)$ or not. 
We claim that $C_3 = \varnothing$.
Indeed, it follows from the rightmost inequality in \eqref{eq:claim19_beginning} that 
\[f(v^*; G) > \frac 13 - \lambda(x) + \left(f(x; G') - f(x; G)\right) + \abs U \cdot \gamma(v^*),\]
hence
\begin{equation}\label{eq:claim 19}
\left(d_G(v^*)-\abs U\right)\gamma(v^*) > \frac 13 - \left(\lambda(x) + f(y; G) - f(y; G')\right) \geq \frac 13 - \left(\lambda(x)+\lambda(y)\right).
\end{equation}
Furthermore, $\abs U \geq d_G(v^*) - 3$, since $N(v^*)-U$ consists of $w$, potentially $y$ but not $x$, and at most one other vertex $s$ with $\gamma(s)=0$ and $s\in C_3$ (by \cref{claim:ABC:v^* has at most one neighbor in C3}), 
implying $\abs{N(v^*)-U} \leq 3$. 
Also, by \Cref{claim:ABC:loss in neighbourhood of B_3 is at most 1/6}, we know that $\lambda(x) \leq \max\{\lambda(x), \lambda(y)\} \leq \frac 1{10}$ and $\lambda(x) + \lambda(y) \leq \frac 16$.
Hence
\[3\gamma(v^*) > \frac 13 - \frac 16 = \frac 16.\]
Therefore, $\gamma(v^*) > \frac 1{18}$, and by the contrapositive of \Cref{claim:ABC:If C3 nonempty}, we deduce that $C_3 = \varnothing$.
In particular, $d_G(v^*) - \abs U \in \{1, 2\}$, depending on whether $v^*y \in E(G)$ or not. 
We distinguish these two cases. 

{\bf Case~1: $v^*y \in E(G)$}. Then, since $\lambda(x) \leq \frac 1{10}$, 
$\gamma(v^*) \leq \frac 1{10}$ (by \Cref{claim:ABC:deg v^* at least 4}), 
$d_G(y) = d_{G'}(y)$ 
and $\abs U = d_G(v^*)-2$, by the leftmost inequality of \eqref{eq:claim 19}, we know that
\[\frac 15 \geq 2\gamma(v^*) > \frac 13 - \lambda(x) \geq \frac 7{30} > \frac 15,\]
which is a contradiction. 

{\bf Case~2: $v^*y \notin E(G)$}. Then, since $\lambda(x)+\lambda(y) \leq \frac 16$ and $\abs U = d_G(v^*)-1$, by \eqref{eq:claim 19}, we know that
\[\frac 1{10} \geq \gamma(v^*) > \frac 13 - \left(\lambda(x)+\lambda(y)\right) \geq \frac 16,\]
which is a contradiction as well.
    
This concludes the proof of the claim.
\end{claimproof}

\Cref{claim:ABC:properties of v^*}, \Cref{claim:ABC:v^* has at most one neighbor in C3}, and \Cref{claim:ABC:v^* has no neighbor in B3} together imply that $v^*$ has no neighbor in $B_3$ and exactly one neighbor in $C_3$, let us call it $w$. 
Let $x, y$ be the two neighbors of $w$ that are distinct from $v^*$ in  such  a way that $f(x; G) \geq f(y; G)$, and let $Z \coloneqq N(v^*) \setminus N[w]$. 
Observe that for every $z\in Z$, we have  $\gamma(z) > 0$, and thus $\gamma(v^*) \leq \gamma(z)$. 
Using \Cref{claim:ABC:ILF size is bouded by sum of weights} with $F=G[\{w\}]$, we obtain
\begin{align*}
1 &< f(w;G) + f(v^*;G) + f(x;G) + f(y;G) - \sum_{z \in Z}\gamma(z) \\
&\leq \frac 16 + f(v^*;G) + f(x;G) + f(y;G) - \abs Z\gamma(v^*).
\end{align*}
Since $C_3 \neq \varnothing$, \Cref{claim:ABC:If C3 nonempty} implies that $\gamma(v^*)\leq \frac 1{21}$. 
By \Cref{claim:ABC:properties of v^*}, we know that $f(v^*; G) = d(v^*)\gamma(v^*)$. 
Since $1 \leq d(v^*)-\abs Z \leq 3$, it follows that
\begin{align*}
\frac 56 &< d(v^*)\gamma(v^*) + f(x; G) + f(y; G) - (d(v^*)-3)\gamma(v^*) \\
&= f(x; G) + f(y; G) + 3\gamma(v^*) \\
&\leq f(x; G) + f(y; G) + 3 \cdot \frac 1{21},
\end{align*}
and thus
\[f(x; G) = \max\{f(x; G), f(y; G)\} \geq \frac 12\left(f(x; G) + f(y; G)\right) > \frac 12\left(\frac 56 - \frac 17\right) = \frac {29}{84} > \frac 13.\]
Since $\gamma(x) < \frac 16$ (by \Cref{claim:ABC:weight is bigger than gain of neighbors}), we deduce that $x \in A_4$ and thus $f(x; G) = \frac 25$ and $\gamma(x) = \frac 1{10}$.
Then, 
\[f(y; G) > \frac {29}{42} - f(x; G) = \frac {61}{210} > \frac 27,\]
and since $\gamma(y) < \frac 16$ (by \Cref{claim:ABC:weight is bigger than gain of neighbors}), it follows that  $y \in A_4 \cup A_5$. But now $\gamma(x)+\gamma(y) \geq \frac 1{10} + \frac 1{15} = \frac 16$, which contradicts \Cref{claim:ABC:weight is bigger than gain of neighbors}.

This final contradiction shows that $G$ and $(A, B, C)$ do not form a  counterexample. This concludes the proof of \Cref{lemma:A-B-C}.
\end{proof}

\section{forests of stars}
\label{sec:star_forests}

As discussed in the introduction, a special kind of forests of caterpillars---or equivalently, graphs of pathwidth at most $1$---are forests of stars, where every connected component is a star. 
(Let us recall that an isolated vertex is considered to be a star in this paper.) 
Forests of stars correspond to graphs of treedepth at most $2$. 
The aim of this section is to study the impact of restricting caterpillars to be stars. 
The lower bound in \Cref{cor:caterpillar} is no longer true with this extra requirement, and in fact there are infinitely many extremal lower bounds for finding induced forests of stars in graphs as a function of their degree sequence. 
\Cref{thm:characterization of ELBs for forests of stars} gives a characterization of all these lower bounds, which we restate here for convenience.  

\thmforestsofstars*

The proof of \Cref{thm:characterization of ELBs for forests of stars} relies on the following lemma. 

\begin{lemma}[AB lemma for forests of stars]\label{lemma:A-B for forests of stars}
Let $G$ be a graph and let $(A, B)$ be a partition of its vertex set (where parts are possibly empty). 
Then $G$ contains an induced forest of stars $F$ satisfying
\begin{itemize}
    \item every edge $vw \in E(F)$ with $w\in B$ satisfies $v\in A$ and $d_F(v) = 1$, and    
    \item $\abs {V(F)} \geq \sum_{v \in A}f_A(d_G(v)) + \sum_{v \in B}f_B(d_G(v))$     
\end{itemize}
where
\[f_A(d) \coloneqq \begin{cases}1 &\text{ if } d=0\\\frac 56 &\text{ if } d=1\\\frac 35 &\text{ if } d=2\\\frac 2{d+1} &\text{ if } d\geq 3\end{cases}\qquad\qquad\text{ and }\qquad\qquad f_B(d) \coloneqq \frac 1{d+1}.\]
\end{lemma}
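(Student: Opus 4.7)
The plan is to adapt the minimum-counterexample strategy used for the ABC Lemma (\Cref{lemma:A-B-C}). Assume the statement fails and pick a counterexample $(G,A,B)$ with $|V(G)|$ minimum. Standard reductions give that $G$ is connected and $|V(G)|\geq 3$. Introduce the gain $\gamma(v)$ of a vertex $v$, namely the increase in $f(v;G)$ when $d(v)$ is decreased by one, together with the loss $\lambda(v)$ defined symmetrically; both are nonnegative since $f_A$ and $f_B$ are nonincreasing. These quantities can be tabulated once and for all; in particular, $\gamma(v)=\tfrac{1}{d(v)(d(v)+1)}$ for $v\in B$ with $d(v)\geq 2$.

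Two inequalities will serve as the workhorses, in direct analogy with \Cref{claim:ABC:weight is bigger than gain of neighbors} and \Cref{claim:ABC:ILF size is bouded by sum of weights}: first, $f(v;G)>\sum_{w\in N(v)}\gamma(w)$ for every $v$, obtained by applying the lemma to $G-v$ and adding $v$ as an isolated vertex to the returned forest; second, for every induced forest of stars $F$ respecting $(A,B)$,
\[
|V(F)|<\sum_{v\in N[V(F)]}f(v;G)-\sum_{w\in N^2(V(F))}\gamma(w),
\]
obtained by applying the lemma to $G-N[V(F)]$ and taking the union with $F$.

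Next I would prove a sequence of structural claims mirroring \Cref{claim:no leaf in G}--\Cref{claim:ABC:no C_3--C_3 edge}. In particular, $\delta(G)\geq 2$: a leaf $v$ with unique neighbor $w$ can be added to the inductive solution of $G-v$ either as an isolated vertex, or through the $K_2$ component $vw$ when $v\in A$, or as a leaf of a star centered at $w$ when the partition allows; combined with a demotion argument analogous to \Cref{claim:ABC:demoting and lowering degree costs at most 1/6}, this forces a contradiction. Likewise, a degree-$2$ vertex $v\in B$ would, via inequality (i), force both of its neighbors to have large $f$-value, pinning them down to very few possible configurations in $A$, each of which can be refuted by a direct calculation. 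Further claims of the same flavour limit the positions of low-gain vertices; the aim is to isolate the set of vertices with $\gamma(v)=0$, which in this setting consists only of vertices in $B$ of small degree that we want to rule out. Once this is done, I select a vertex $v^*$ of positive gain minimizing $\gamma(v^*)$ and derive the final contradiction by applying inequality (ii) to a carefully chosen $F$ containing $v^*$ and its lowest-gain neighbor, exactly as is done at the end of the ABC Lemma proof.

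The main obstacle will be the asymmetric constraint that $B$-vertices may appear on a non-trivial component of $F$ only when the opposite endpoint is an $A$-vertex of $F$-degree $1$; this is a stronger restriction than in the ABC Lemma, and it means that when one combines an inductive forest of stars in a subgraph with additional vertices, one must verify that no $A$-vertex ends up with $F$-degree $\geq 2$ while being adjacent to a $B$-vertex. Consequently, several reductions will require promoting or demoting vertices between $A$ and $B$ in the subproblem to preserve the invariant, and these promotions are what the gain/loss values must absorb. I expect the tight cases to arise around $d_G(v)\in\{2,3\}$ for $v\in A$, where $f_A$ has its non-standard values $3/5$ and $1/2$, and around degree-$2$ vertices in $B$, where $\gamma=1/6$ is substantial.
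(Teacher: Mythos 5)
Your setup (minimum counterexample, gains/losses, the two workhorse inequalities obtained by deleting $v$ or $N[V(F)]$, the claim $\delta(G)\geq 2$, and the elimination of low-degree $B$-vertices) does match the opening of the paper's proof. But the endgame you propose is where the argument genuinely breaks. First, a small but telling error: in this setting \emph{no} vertex has zero gain, since $f_B(d)=\frac 1{d+1}$ and $f_A$ are strictly decreasing; the zero-gain vertices ($B_3$, $C_3$) that drive the choice of $v^*$ in \Cref{lemma:A-B-C} simply do not exist here. More importantly, the minimum-gain analysis does not end in a local numerical contradiction: it ends (as in \Cref{claim:forest of stars characterization:A_2-A_3_B_2} and the subsequent claims) with $V(G)=A_2\cup A_3$, i.e.\ an essentially cubic graph entirely inside $A$, with the few degree-$2$ vertices pairwise far apart. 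For such a graph every vertex contributes $f_A(3)=\frac 12$, and the target bound $\abs{V(F)}\geq \frac 12\abs{V(G)}$ is \emph{tight} (e.g.\ $K_4$, where $\alpha_{\SS}(K_4)=2$). A tight configuration cannot be refuted by any choice of $F$ in inequality (ii) or by any discharging around a minimum-gain vertex $v^*$ "exactly as in the ABC Lemma"; one must actually \emph{construct} an induced forest of stars of order at least half the number of vertices in a cubic graph.

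That construction is the missing idea. The paper finishes by contracting, for each $v\in A_2$, one edge of the path through $v$ (legitimate because the $A_2$-vertices are at distance at least $4$ and lie in no triangle), obtaining a cubic graph, and then invoking \Cref{lemma:alpha_1 geq |G|/2 if G is cubic}: a max-cut-type partition $(V_1,V_2)$ with $\Delta(G[V_i])\leq 1$ yields an induced forest whose components are $K_1$'s and $K_2$'s on at least half the vertices, which lifts back to a forest of stars in $G$ of order at least $f(G,A,B)$, contradicting the counterexample. Without this global step (or some substitute for it), your plan cannot close: the local counting machinery you describe is exactly the part that becomes uninformative once the graph is subcubic and all weights equal $\frac 12$. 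I'd suggest revising the final phase of your outline to include such a constructive argument for (sub)cubic graphs, and correcting the statement about zero-gain vertices accordingly.
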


Before proving \Cref{lemma:A-B for forests of stars}, let us show that it implies \Cref{thm:characterization of ELBs for forests of stars}. 
In the subsequent proofs, an induced forest of stars $F$ in a graph $G$ is said to {\em respect} a partition $(A, B)$ of $V(G)$ if $F$ satisfies the first condition in \Cref{lemma:A-B for forests of stars} w.r.t.\ the partition $(A, B)$, namely, that every edge $vw \in E(F)$ with $w\in B$ satisfies $v\in A$ and $d_F(v) = 1$.  

\begin{proof}[Proof of \Cref{thm:characterization of ELBs for forests of stars} assuming \Cref{lemma:A-B for forests of stars}]
Let $\varepsilon \in \R$ with $0 \leq \varepsilon \leq \frac 16$. 
We first show that $f_\varepsilon$ is a lower bound for $\alpha_{\SS}$. 

Let $G$ be a graph. 
We need to show that $\alpha_{\SS}(G) \geq \sum_{v \in V(G)}f_\varepsilon(d_G(v))$. 
Clearly, it is enough to prove this in the case where $G$ is connected, so let us assume that $G$ is connected. 
Also, we may assume that $|V(G)| \geq 3$, since the inequality is easily seen to hold in case $G$ is isomorphic to $K_1$ or $K_2$. 
This implies in particular that no two leaves of $G$ are adjacent. 

Let $L \coloneqq \{v \in V(G) \text{ s.t. } d_G(v)=1\}$ be the set of leaves of 
$G$ (which is thus an independent set). 
Let $G' \coloneqq G - L$.  
Let $(A', B')$ be the partition of $V(G')$ defined as follows
\begin{align*}
    A' &\coloneqq \left\{v \in V(G) \setminus L \text{ s.t. } N_G(v) \cap L = \varnothing\right\},\\
    B' &\coloneqq \left\{v \in V(G) \setminus L \text{ s.t. } N_G(v) \cap L \neq \varnothing\right\}.
\end{align*}

By \Cref{lemma:A-B for forests of stars}, there exists  an induced forest of stars $F'$ respecting $(A', B')$ in $G'$ with at least $\sum_{v \in A'}f_{A}(d_{G'}(v)) + \sum_{v \in B'}f_{B}(d_{G'}(v))$ vertices. 
Let $F \coloneqq G[V(F') \cup L]$. 
We claim that $F$ is an induced forest of stars in $G$. 
It is clear that $F$ is a forest, since it is obtained from $F'$ by (possibly) adding some leaves.   
Now, suppose that some connected component of $F$ is not a star. 
Then, one can find an edge $vw$ in that component where both $v$ and $w$ have degree at least $2$ in $F$. 
These two vertices (and the edge $vw$) must be in $F'$ as well, since all vertices in $F - V(F')$ are leaves of $F$. 
At least one of $v,w$ is a leaf in $F'$ (since every component of $F'$ is a star), say $w$ is a leaf in $F'$. 
Since $w$ is no longer a leaf in $F$, it follows that $w\in B'$. 
By the conditions of \Cref{lemma:A-B for forests of stars}, this implies in turn that $v\in A'$ and $v$ is a leaf of $F'$.  
But then, $v$ has no leaf neighbor in $G$, and hence $v$ is a leaf of $F$ as well, a contradiction. 
Therefore, $F$ is forest of stars, as claimed.  

Next, we will show that $F$ has at least $\sum_{v \in V(G)}f_\varepsilon(d_G(v))$ vertices. 
First,  observe that 
\[f_{A}(d_{G'}(v)) = f_{A}(d_G(v)) \geq f_\varepsilon(d_G(v))\]
for every vertex $v \in A'$, since $v$ has degree at least $2$ in $G$.  

For $v \in B'$, let $\ell(v) \coloneqq \abs {N_G(v) \cap L}$, and let 
\begin{align*}
D(v) \coloneqq& \left(f_{B}(d_{G'}(v)) + \ell(v)\right) - \left(f_\varepsilon(d_G(v)) + \sum_{w \in N_G(v) \cap L}f_\varepsilon(d_G(w))\right) \\
=& f_{B}(d_{G'}(v)) + \ell(v)\varepsilon - f_\varepsilon(d_G(v)).
\end{align*}
Observe that $D(v) > 0$ in case $d_{G'}(v) = 0$.  
If $d_{G'}(v) \geq 1$, since $\ell(v) \geq 1$ and $d_G(v) \geq d_{G'}(v)+1$, we know that $d_G(v) \geq 2$, and in particular $f_\varepsilon(d_G(v)) \leq \frac 1{d_G(v)}+\varepsilon$.
Therefore, 
\[D(v) \geq \frac 1{d_{G'}(v)+1} + \varepsilon - \frac 1{d_G(v)} - \varepsilon \geq \frac 1{d_{G'}(v)+1} - \frac 1{d_{G'}(v)+1} = 0.\]

Thus, $D(v) \geq 0$ holds for every vertex $v \in B'$. 
It follows that 
\begin{align*}
    \abs {V(F)} &=  \abs {V(F')} + \abs L \\
    &\geq \sum_{v \in A'}f_{A}(d_{G'}(v)) + \sum_{v \in B'}f_{B}(d_{G'}(v)) + \abs L \\
    &\geq \sum_{v \in A'}f_\varepsilon(d_G(v)) + \sum_{v \in B'}f_{B}(d_{G'}(v)) + \abs L \\ 
    &= \sum_{v \in A'}f_\varepsilon(d_G(v)) + \sum_{v \in B'}\left(f_\varepsilon(d_G(v)) + D(v)\right) + \sum_{v \in L} (1-\varepsilon) \\     
    &\geq \sum_{v \in V(G)}f_\varepsilon(d_G(v)), 
\end{align*}
as desired. This concludes the proof that $f_\varepsilon$ is a lower bound for $\alpha_{\SS}$. 

Now, it remains to show that (i) every lower bound for $\alpha_{\SS}$ is dominated by $f_\varepsilon$ for some $\varepsilon$ satisfying $0 \leq \varepsilon \leq \frac 16$, and (ii) that these bounds are all extremal.
We will proceed as in the proof of \Cref{thm:characterization of ELBs for max-degree-k caterpillars}. 
Let us first show (i). 
Let $\varphi : \mathbb N \to \mathbb R$ be a lower bound for $\alpha_{\SS}$, and define $\varepsilon \coloneqq 1-\varphi(1)$.
Since $\varphi$ is a lower bound for $\alpha_{\SS}$, we know that
\[
\varphi(0) \leq \alpha_{\SS}(K_1) = 1 = f_\varepsilon(0)
\]
and $5\varphi(2) \leq  \alpha_{\SS}(C_5)$, and thus
\[
\varphi(2) \leq  \frac 15\alpha_{\SS}(C_5) = \frac 35.
\]
Furthermore, $\varepsilon$ was chosen so that $\varphi(1) = f_\varepsilon(1)$, and $\varphi(d) \leq \frac{2}{d+1}$ holds for all $d\geq 3$, as witnessed by the complete graph $K_{d+1}$. 

Thus, it only remains to show that $\varphi(d) \leq \frac 1d + \varepsilon$ for all $d\geq 2$. 
To do so, let $n \geq 2$, and define the graph $K'_n$ as follows: 
\[V(K'_n) = \{(v, i) : 1 \leq v \leq n, i \in \{0, 1\}\}\] 
and there is an edge between $(v, i)$ and $(w, j)$ if and only if  $i=j=0$ and $v \neq w$, or $i=0$, $j=1$ and $v=w$.
Informally, $K'_n$ is the graph obtained by adding a single leaf to every vertex of the complete graph $K_n$.

We claim that $\alpha_{\SS}(K'_n) = n+1$.
One can observe that $\alpha_{\SS}(K'_n) \geq n+1$ since $\{(v, 1) : 1 \leq v \leq n\} \cup \{(1, 0)\}$ induces a forest of stars with exactly $n+1$ vertices.
To see that $\alpha_{\SS}(K'_n) \leq n+1$, observe that every induced forest of stars contains at most two vertices from $\{(v, 0) : 1 \leq v \leq n\}$; furthermore, if it contains two such vertices, then it contains at most $n-1$ vertices from $\{(v, 1) : 1 \leq v \leq n\}$, as otherwise it would contain an induced $P_4$. 

It follows that 
\[
n-n\varepsilon + n\varphi(n) = n\varphi(1) + n\varphi(n)  \leq \alpha_{\SS}(K'_n) = n+1
\]
and in particular
\[\varphi(n) \leq \frac 1n + \varepsilon, \]
as desired. 
We conclude that $\varphi \leq f_\varepsilon$.

Again, similarly to the argument in the proof of \Cref{thm:characterization of ELBs for max-degree-k caterpillars}, the functions $f_\varepsilon$ are pairwise incomparable for different values of $\varepsilon$, and thus must all be extremal.
\end{proof}

In order to prove \Cref{lemma:A-B for forests of stars}, we need the following easy lemma on cubic graphs. 
(A {\em cubic} graph is a graph where every vertex has degree $3$.)

\begin{lemma}\label{lemma:alpha_1 geq |G|/2 if G is cubic}
Every cubic graph $G$ admits a partition $(V_1, V_2)$ of $V(G)$ such that $\Delta(G[V_i]) \leq 1$ for $i \in \{1, 2\}$. 
In particular, $G$ contains an induced forest with at least 
$\frac 12\abs {V(G)}$ vertices, where every connected component is isomorphic to $K_1$ or $K_2$.
\end{lemma}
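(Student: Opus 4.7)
The plan is to prove the partition statement via a standard max-cut argument, and then to read off the ``in particular'' conclusion as an immediate corollary.

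More precisely, I take $(V_1, V_2)$ to be a partition of $V(G)$ that maximizes the number of edges of $G$ with one endpoint in $V_1$ and the other in $V_2$ (i.e., a maximum cut). For any vertex $v \in V_i$, if $v$ had at least two neighbors in its own part $V_i$ and thus at most one in the opposite part, then moving $v$ to the other part would strictly increase the number of cut edges, contradicting the maximality of the cut. Since $d_G(v) = 3$, this means that every $v \in V_i$ has at most $\lfloor 3/2 \rfloor = 1$ neighbor in $V_i$, so $\Delta(G[V_i]) \leq 1$ for $i \in \{1,2\}$, as required.

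For the ``in particular'' part, I simply take $V_i$ to be the larger of the two parts, so that $|V_i| \geq \tfrac{1}{2}|V(G)|$. Since $\Delta(G[V_i]) \leq 1$, the induced subgraph $G[V_i]$ is a disjoint union of isolated vertices and isolated edges; in particular, it is a forest whose components are all isomorphic to $K_1$ or $K_2$, with at least $\tfrac{1}{2}|V(G)|$ vertices.

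There is essentially no obstacle: the whole argument rests on the elementary local-move observation that in a maximum cut of any graph, every vertex has at least as many neighbors across the cut as on its own side, which for a cubic graph forces at most one same-side neighbor. The only thing to be slightly careful about is to state the local-move argument correctly (strictly greater cut after the move, hence the contradiction), and to observe that a graph of maximum degree at most $1$ is automatically a disjoint union of $K_1$'s and $K_2$'s.
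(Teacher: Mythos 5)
Your proposal is correct and uses exactly the same argument as the paper: take a partition maximizing the number of crossing edges and observe that the local-move improvement forces every vertex to have at most one neighbor on its own side, then take the larger part. Nothing further is needed.
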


\begin{proof}
Given a partition $(V_1, V_2)$ of $V(G)$, we let $E(V_1, V_2)$ denote the set of $V_1$--$V_2$ edges in $G$. 
Observe that, if $(V_1, V_2)$ is a partition of $V(G)$, and if $v$ is a vertex with at least two neighbors in its own part, then moving $v$ to the other part increases $|E(V_1, V_2)|$ by at least $1$. 
It follows that if we let $(V_1, V_2)$ be a partition of $V(G)$ maximizing $|E(V_1, V_2)|$, then every vertex has at most one neighbor in its part, as desired. 
\end{proof}

We remark that \Cref{lemma:alpha_1 geq |G|/2 if G is cubic} is a special case of a theorem of Lov\'asz~\cite{lovasz1966on} about (non-proper) vertex colorings of graphs where each color class induces a graph whose maximum degree satisfies a prescribed upper bound. 

Now, we prove \Cref{lemma:A-B for forests of stars}. 

\begin{proof}[Proof of \Cref{lemma:A-B for forests of stars}]
We use similar notations as in \Cref{lemma:A-B-C}: Given a vertex $v$ of a graph $G$, and a partition $(A, B)$ of $V(G)$, we let $f(v; G, A, B)$ be $f_A(d_G(v))$ or $f_B(d_G(v))$, depending on whether $v\in A$ or $v\in B$. 
We abbreviate $f(v; G, A, B)$ into $f(v; G)$ when the partition $(A, B)$ is clear from the context. 
Also, we let $f(G, A, B)\coloneqq \sum_{v\in V(G)} f(v; G, A, B)$, and for $i \geq 0$, we let $A_i$ denote the set of vertices in $A$ with degree at most $i$, and we define $B_i$ similarly w.r.t.\ $B$. 

To prove the lemma, we argue by contradiction. 
Suppose the lemma is false, and let $G$ and $(A, B)$ be a counterexample minimizing $\abs{V(G)}$. 
Clearly, $G$ is connected, and $\abs{V(G)} \geq 3$. 

Observe that every vertex $v$ of $G$ has degree at least $1$. 
As in the proof of \Cref{lemma:A-B-C}, we define the \emph{gain} $\gamma(v)$ of a vertex $v$ to be the increase of $f(v;G)$ when $d(v)$ is decreased by one, that is, 
\[\gamma(v) \coloneqq \begin{cases}f_A(d(v)-1)-f_A(d(v)) &\text{ if } v \in A\\f_B(d(v)-1)-f_B(d(v)) &\text{ if } v \in B.\end{cases}\]
Observe that, contrary to the proof of \Cref{lemma:A-B-C}, here $\gamma(v) > 0$ always holds (that is, a gain of zero is not possible).
If $v \in B$, then $\gamma(v) = \frac 1{d(v)(d(v)+1)}$, and if $v \in A$:
\[\gamma(v) = \begin{cases}\frac 16 &\text{ if } d(v)=1\\\frac 7{30} &\text{ if } d(v)=2\\\frac 1{10} &\text{ if } d(v) = 3\\\frac 2{d(v)(d(v)+1)} &\text{ if } d(v) \geq 4\end{cases}\]

The proof is split into a number of claims. Our first two claims mirror respectively \Cref{claim:ABC:weight is bigger than gain of neighbors} and \Cref{claim:ABC:ILF size is bouded by sum of weights} in the proof of \Cref{lemma:A-B-C}. 
The proofs are verbatim the same (up to replacing $(A, B, C)$ with $(A, B)$ and `linear forest' by  `forest of stars'), thus we do not repeat them here. 

\begin{claim}\label{claim:forest of stars characterization:weight bigger than sum of gains}
$f(v;G) > \sum_{w \in N(v)}\gamma(w)$ holds for every vertex $v \in V(G)$.  
\end{claim}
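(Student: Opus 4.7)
The plan is to proceed exactly as in the proof of \Cref{claim:ABC:weight is bigger than gain of neighbors} for the ABC Lemma, since we are again in a minimal-counterexample setup and the statement has the same shape. The key observation is that deleting a single vertex affects the weight function $f(\cdot\,; \cdot)$ in a very controlled way: the contribution $f(v;G)$ disappears, and each neighbor of $v$ sees its degree drop by one, which increases its weight by exactly its gain $\gamma(\cdot)$.

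Concretely, fix any $v \in V(G)$, set $G' \coloneqq G - v$, and let $(A', B')$ be the restriction of $(A, B)$ to $V(G')$. By minimality of the counterexample, the lemma holds for $(G', A', B')$, so there is an induced forest of stars $F'$ in $G'$ respecting $(A', B')$ with $|V(F')| \geq f(G', A', B')$. Viewed as a subgraph of $G$, $F'$ is still an induced forest of stars respecting $(A, B)$: the degree conditions on vertices of $F'$ with respect to $(A, B)$ are identical to those with respect to $(A', B')$, since no edges of $F'$ involve $v$. Because $G, (A, B)$ is a counterexample, $F'$ cannot attain the required bound in $G$, hence $f(G, A, B) > |V(F')| \geq f(G', A', B')$.

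Now expand $f(G', A', B')$ by comparing vertex-by-vertex with $f(G, A, B)$: the only changes are that $v$ is removed (subtracting $f(v;G)$) and each $w \in N(v)$ has its degree reduced by $1$ (adding $\gamma(w)$). Thus
\[
f(G', A', B') \;=\; f(G, A, B) - f(v;G) + \sum_{w \in N(v)} \gamma(w).
\]
Substituting into $f(G, A, B) > f(G', A', B')$ and rearranging yields the claimed inequality $f(v;G) > \sum_{w \in N(v)} \gamma(w)$.

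There is no real obstacle here: the only point to double-check is that $F'$ continues to respect the partition $(A, B)$ once reinterpreted inside $G$, which is automatic because the constraints in \Cref{lemma:A-B for forests of stars} depend only on the edges of $F'$ and the classes of its vertices, both of which are preserved under the embedding $G' \hookrightarrow G$.
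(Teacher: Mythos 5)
Your proof is correct and is essentially the paper's own argument: the paper simply notes that the proof of \Cref{claim:ABC:weight is bigger than gain of neighbors} carries over verbatim, i.e.\ delete $v$, invoke minimality of the counterexample to get $f(G, A, B) > f(G', A', B') = f(G, A, B) - f(v;G) + \sum_{w \in N(v)}\gamma(w)$, and rearrange. Your extra check that $F'$ still respects $(A,B)$ inside $G$ is the (correct) justification the paper leaves implicit.
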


\begin{claim}\label{claim:forest of stars characterization:weight of ISF is bigger than sum of gains}
Let $F$ be an induced forest of stars in $G$ respecting $(A, B)$ with $\abs {V(F)}\geq 1$.  
Let $G' \coloneqq G-N[V(F)]$ and let $(A', B')$ be the restriction of $(A, B)$ to $V(G')$. 
Then
\[\abs {V(F)} < f(G, A, B) - f(G', A', B')
\leq \sum_{v \in N[V(F)]}f(v;G) - \sum_{w \in N^2(V(F))}\gamma(w) 
\leq \sum_{v \in N[V(F)]}f(v;G).\] 
\end{claim}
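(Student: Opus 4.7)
The plan is to mirror the proof of \Cref{claim:ABC:ILF size is bouded by sum of weights} verbatim, as the paper indicates. The key idea is to use the minimality of $(G, A, B)$ as a counterexample: since $|V(F)| \geq 1$ implies $|V(G')| < |V(G)|$, the lemma holds for $G'$ equipped with the restricted partition $(A', B')$. So I would first extract an induced forest of stars $F'$ in $G'$ respecting $(A', B')$ with $|V(F')| \geq f(G', A', B')$ (taking $F' = \varnothing$ if $V(G') = \varnothing$). Since $V(F') \subseteq V(G') = V(G) \setminus N[V(F)]$, there are no edges of $G$ between $V(F)$ and $V(F')$, so $G[V(F) \cup V(F')]$ is simply the disjoint union of $F$ and $F'$, and hence an induced forest of stars in $G$ respecting $(A, B)$. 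Applying the counterexample hypothesis to this induced forest gives $|V(F)| + |V(F')| < f(G, A, B)$, and subtracting $f(G', A', B') \leq |V(F')|$ from both sides yields the first inequality.

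For the second inequality, I would split the difference by vertex set:
\[f(G, A, B) - f(G', A', B') = \sum_{v \in N[V(F)]} f(v; G) + \sum_{v \in V(G')}\bigl(f(v; G) - f(v; G')\bigr).\]
A vertex $v \in V(G') \setminus N^2(V(F))$ has no neighbor in $N[V(F)]$, so $d_{G'}(v) = d_G(v)$ and the corresponding term vanishes. A vertex $v \in N^2(V(F))$ has at least one neighbor in $N(V(F))$, so $d_{G'}(v) \leq d_G(v) - 1$, and since $f_A$ and $f_B$ are non-increasing, $f(v; G') - f(v; G) \geq \gamma(v)$ (the increase under a degree-decrease of one is a lower bound on the total increase). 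Substituting these two observations yields the middle inequality. The final inequality is then immediate because $\gamma(w) \geq 0$ for every $w \in V(G)$.

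There is essentially no obstacle here; the only point needing care is the verification that $F \cup F'$ really is an induced forest of stars in $G$ respecting $(A, B)$. This follows at once from the fact that the two vertex sets are at distance at least $2$ in $G$, so no new edges appear in the induced subgraph and the partition constraints are satisfied componentwise. Everything else is a bookkeeping argument using monotonicity of $f_A, f_B$ and nonnegativity of $\gamma$, exactly as in the proof of \Cref{claim:ABC:ILF size is bouded by sum of weights}.
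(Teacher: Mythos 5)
Your proposal is correct and is essentially identical to the paper's argument: the paper simply notes that the proof is verbatim the same as that of \Cref{claim:ABC:ILF size is bouded by sum of weights}, namely taking the disjoint union of $F$ with a large forest $F'$ in $G'$ obtained from minimality, applying the counterexample hypothesis, and then bounding $f(G,A,B)-f(G',A',B')$ using monotonicity of $f_A, f_B$ on the $N^2(V(F))$ vertices and nonnegativity of the gains. Your verification that the union respects $(A,B)$ because the two vertex sets are at distance at least $2$ is exactly the point the paper relies on as well.
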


In the next claim we show that $G$ has minimum degree at least $2$. 
Thus, this claim parallels \Cref{claim:no leaf in G} in the proof of \Cref{lemma:A-B-C}. 
However, the proofs of the two claims are different, because each depends on the particular values of $f(v;G)$ for a vertex $v$ in $G$, which are not the same in the two lemmas. 

\begin{claim}\label{claim:forest of stars characterization:no leaf}
$\delta(G) \geq 2$.
\end{claim}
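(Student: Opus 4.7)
The plan is to argue by contradiction, supposing that some vertex $v \in V(G)$ satisfies $d_G(v) = 1$ with unique neighbour $w$, and to split into two cases depending on whether $v \in A$ or $v \in B$.

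For the case $v \in A$, I will mirror the proof of the analogous \Cref{claim:no leaf in G} from the ABC lemma. Set $G' \coloneqq G - v$ and define $(A', B')$ by restricting $(A, B)$ to $V(G')$ and then \emph{demoting} $w$: if $w \in A$, move it to $B'$; if $w \in B$, leave it in $B'$. The key structural observation is that, because $w \in B'$, any induced forest of stars $F'$ of $G'$ respecting $(A', B')$ can contain $w$ in only three configurations: isolated, as the centre of a star whose leaves all lie in $A'$, or as the leaf of a $K_2$ attached to an $A'$-vertex. In each case one checks that attaching $v$ via the edge $vw$ yields a valid induced forest of stars of $G$ respecting the original partition $(A, B)$, regardless of whether $w$ was originally in $A$ or in $B$. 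A short case analysis on the original class of $w$ and on $d_G(w)$ then gives $f(G, A, B) - f(G', A', B') \leq 1$ (the tight case being $w \in A$ with $d_G(w) = 3$), so the forest provided by the minimality of the counterexample, together with $v$, yields $\abs{V(F)} \geq f(G', A', B') + 1 \geq f(G, A, B)$, a contradiction.

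For the case $v \in B$, I will appeal directly to \Cref{claim:forest of stars characterization:weight of ISF is bigger than sum of gains}. Since $\{v\}$ respects $(A, B)$ vacuously, applying the claim with $F = G[\{v\}]$ yields $f(v; G) + f(w; G) > 1$, whence $f(w; G) > \frac 12$. Combined with $d_G(w) \geq 1$ and $\abs{V(G)} \geq 3$, this forces $w \in A$ with $d_G(w) = 2$. Let $u$ be the second neighbour of $w$; the induced subgraph on $\{v, w\}$ is then a $K_2$ that also respects $(A, B)$ (its unique edge has the $B$-endpoint $v$, while $w \in A$ has degree $1$ in this $K_2$), so reapplying the claim to this induced forest gives $f(u; G) > \frac 9{10}$, which is impossible for any vertex of degree at least one.

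The main obstacle is the $v \in A$ case: here ``demotion'' from $A$ to $B$ does not literally forbid $w$ from appearing in $F'$ but merely restricts its local structure, so most of the verification consists in cataloguing the three possible configurations of $w$ in $F'$ and checking that attaching $v$ in each of them produces a legal forest of stars of $G$ respecting the original partition. The numerical inequality $f(G, A, B) - f(G', A', B') \leq 1$ also requires some care, since it depends on both the original class of $w$ and on $d_G(w)$.
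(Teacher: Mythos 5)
Your proposal is correct and follows essentially the same route as the paper: the case $v\in A$ is handled exactly as in the paper's Case~1 (delete $v$, demote $w$ to $B'$, invoke minimality, and use the bound $f(w;G)-f(w;G')\leq \frac 16$ with the tight case $w\in A_3$), and the case $v\in B$ reaches the same contradiction via \Cref{claim:forest of stars characterization:weight of ISF is bigger than sum of gains} applied to $G[\{v,w\}]$ after forcing $w\in A_2$. The only cosmetic difference is that you obtain $f(w;G)>\frac 12$ from that claim with $F=G[\{v\}]$, whereas the paper gets it from \Cref{claim:forest of stars characterization:weight bigger than sum of gains} via $\gamma(v)=\frac 12$; both are immediate.
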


\begin{claimproof}
Since $G$ is connected with at least three vertices, we already know that  $\delta(G) \geq 1$. 
Arguing by contradiction, suppose that $v$ is a vertex of $G$ with degree $1$. 
Let $w$ be its neighbor. 
Note that $d(w) \geq 2$, since $G$ is not isomorphic to $K_2$. 

{\bf Case~1: $v\in A_1$.}
Let $G' \coloneqq G-v$ and let $(A', B')$ be obtained by  restricting $(A, B)$ to $V(G')$ and moving $w$ to $B'$ in case it is not already there.  
By the minimality of our counterexample, we know that $G'$ contains an induced forest of stars $F'$ respecting $(A', B')$ with at least $f(G', A', B')$ vertices. 
Observe that $F\coloneqq G[V(F') \cup \{v\}]$ is an induced forest of stars of $G$ respecting $(A, B)$ (thanks to the fact that $w\in B')$.  
Observe also that $f(w; G) - f(w; G') \leq \frac 16$ holds in all possible cases for the vertex $w$ (with equality if $w\in A_3$). 
It follows that 
\begin{align*}
\abs {V(F)} 
&= \abs {V(F')} + 1 \\
&\geq f(G', A', B') + 1 \\
& = f(G, A, B) - f(v; G) - \left(f(w; G) - f(w; G')\right)  + 1 \\
&\geq f(G, A, B) - f(v; G) -\frac 16 + 1 \\
&=f(G, A, B),
\end{align*}
a contradiction. 
Hence, $v\notin A_1$.

{\bf Case~2: $v\in B_1$.}
\Cref{claim:forest of stars characterization:weight bigger than sum of gains} implies that $f(w; G) > \gamma(v) = \frac 12$. 
It follows that $w \in A$ and $d(w) \leq 2$, and hence $w\in A_2$ since $d(w) \geq 2$.  
Let $u$ be the neighbor of $w$ distinct from $v$. 
Note that $f(u;G ) \leq \frac 56$ since $d(u) \geq 1$. 
Using \Cref{claim:forest of stars characterization:weight of ISF is bigger than sum of gains} with $F=G[\{v,w\}]$, we obtain
\[2 < f(v;G)+f(w;G)+f(u;G) 
\leq \frac 12 + \frac 35 + \frac 56 
= \frac {29}{15},\]
a contradiction. 
This concludes the proof. 
\end{claimproof}

\begin{claim}\label{claim:forest of stars characterization:A_2-A_3_B_2}
$A = A_2 \cup A_3$ and $B=B_2$. 
\end{claim}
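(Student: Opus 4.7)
The goal is to rule out $A_d$ for $d\ge 4$ and $B_d$ for $d\ge 3$; given \Cref{claim:forest of stars characterization:no leaf} we already have $A_0=A_1=B_0=B_1=\varnothing$, so these are the only remaining possibilities to exclude. The plan is, for each case, to assume such a vertex $v$ exists and derive a contradiction with the minimality of $(G,(A,B))$, in the spirit of how \Cref{claim:ABC:no B_2 and no C_2} is established in the ABC lemma proof.

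For $v\in B_d$ with $d\ge 3$: here $f(v;G)=1/(d+1)\le 1/4$ is small. My plan is to apply \Cref{claim:forest of stars characterization:weight of ISF is bigger than sum of gains} with $F=G[\{v\}]$ to obtain $\sum_{w\in N(v)} f(w;G) > d/(d+1)$, forcing the neighborhood of $v$ to contain at least one vertex of large $f$-weight (necessarily in $A_2\cup A_3$, since those are the classes with $f>1/4$). Simultaneously, \Cref{claim:forest of stars characterization:weight bigger than sum of gains} gives $\sum_{w\in N(v)}\gamma(w)<1/(d+1)$, which tightly restricts the degrees/parts of the other neighbors. Combining these, I will construct a smaller instance (typically $G'=G-v$ with the restricted partition, possibly demoting or promoting one well-chosen neighbor so the extension back to $G$ remains valid); applying the induction hypothesis to $G'$ and re-attaching $v$ as an isolated vertex or as a pendant leaf off a suitable $A$-neighbor yields a forest of stars of size at least $f(G,A,B)$, the desired contradiction.

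For $v\in A_d$ with $d\ge 4$: here $f(v;G)=2/(d+1)\le 2/5$ and $\gamma(v)=2/(d(d+1))$ is very small. Applying \Cref{claim:forest of stars characterization:weight of ISF is bigger than sum of gains} with $F=G[\{v\}]$ gives $\sum_{w\in N(v)}f(w;G) > (d-1)/(d+1)-\gamma\text{-corrections}$, and then \Cref{claim:forest of stars characterization:weight bigger than sum of gains} restricts the sum of gains of the neighbors to be less than $2/(d+1)$. Together these force the presence of at least one neighbor of very high weight (in $A_2$ or $A_3$). I will then exploit such a high-weight neighbor $u$ by removing $v$ (and possibly demoting $u$ in the restricted partition, or adding an edge between two neighbors of $v$ to encode a constraint) and use minimality to find a star forest $F'$ in the smaller graph; $F'$ can be extended by including $v$ either as an isolated vertex or as the degree-one endpoint of an edge with $u$, producing a star forest of size at least $f(G,A,B)$ in $G$ respecting $(A,B)$.

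The main obstacle is the tightness of the bounds in the edge cases $v\in A_4$ and $v\in B_3$, where there is essentially no slack: the gains and weights involved balance almost exactly against the target $f(G,A,B)$, and the construction of the reduced instance must be fine-tuned (in particular, choosing precisely which neighbor to promote/demote, and whether to add an edge between two neighbors of $v$ so the forest-of-stars condition extends) so that the resulting star forest genuinely respects $(A,B)$ after reinserting $v$. Verifying this compatibility — especially that no edge from $v$ to a $B$-neighbor violates the degree-one condition on its $A$-endpoint — is where the bulk of the bookkeeping lives.
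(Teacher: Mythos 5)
There is a genuine gap: the two ``forcing'' deductions you rely on are quantitatively false, and the reduction step that would have to carry the whole proof is never actually constructed. For $v\in B_3$, \Cref{claim:forest of stars characterization:weight of ISF is bigger than sum of gains} with $F=G[\{v\}]$ only gives $\sum_{w\in N(v)}f(w;G)>\frac{3}{4}$, and \Cref{claim:forest of stars characterization:weight bigger than sum of gains} only gives $\sum_{w\in N(v)}\gamma(w)<\frac{1}{4}$; a neighbourhood consisting of one $A_4$-vertex and two $A_5$-vertices satisfies both constraints (weights $\frac{2}{5}+\frac{1}{3}+\frac{1}{3}=\frac{16}{15}$, gains $\frac{1}{10}+\frac{1}{15}+\frac{1}{15}=\frac{7}{30}$), so nothing forces a neighbour into $A_2\cup A_3$; the same failure occurs for $v\in A_4$ (e.g.\ neighbours in $A_4,A_4,A_4,A_5$). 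Without that structural restriction, everything is delegated to an unspecified surgery: which neighbour to promote or demote, which edge to add, why at most one neighbour of $v$ survives in the smaller forest with the right degree so that reinserting $v$ respects $(A,B)$, and why the weight accounting closes --- and this would have to be done uniformly for all $d\ge 4$ (resp.\ $d\ge 3$), not just the smallest cases. You yourself point out that in the critical cases $A_4$ and $B_3$ there is essentially no slack; that is exactly where a proof is required, and none is given, so as written this is a programme rather than an argument.

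For comparison, the paper's proof of this claim is short, global, and uses no surgery and no case split over degrees: take a vertex $v$ of minimum gain; by \Cref{claim:forest of stars characterization:weight bigger than sum of gains} and minimality, $f(v;G)>\sum_{w\in N(v)}\gamma(w)\ge d(v)\gamma(v)$, whereas $f(u;G)=d(u)\gamma(u)$ holds with \emph{equality} for every $u\in B$ and every $u\in A$ of degree at least $4$. Together with \Cref{claim:forest of stars characterization:no leaf} this places the minimum-gain vertex in $A_2\cup A_3$, so every gain is at least $\frac{1}{10}$, which rules out $B_d$ for $d\ge 3$ and $A_d$ for $d\ge 5$; and $A_4$ is also impossible, since an $A_4$-vertex has gain exactly $\frac{1}{10}$, would therefore itself be of minimum gain, and would violate the strict inequality. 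The missing idea in your proposal is precisely this Caro--Wei-type tightness $f=d\cdot\gamma$ on $B\cup A_{\ge 4}$; once you have it, the claim follows in a few lines from \Cref{claim:forest of stars characterization:weight bigger than sum of gains} alone, with no reduced instances needed.
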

\begin{claimproof}
Let $v \in V(G)$ be a vertex of minimum gain. By \Cref{claim:forest of stars characterization:weight bigger than sum of gains} and our choice of $v$,
\[f(v;G) > \sum_{w \in N(v)}\gamma(w) \geq d(v)\gamma(v),\]
that is, $\gamma(v) < \frac 1{d(v)}f(v;G)$. 
This implies that $v\notin B$ and $v\notin A_i$ for $i \geq 4$. 
Recalling that $\delta(G) \geq 2$ (by \Cref{claim:forest of stars characterization:no leaf}), it follows that $v\in A_2 \cup A_3$, and in particular, $\gamma(v) \geq \frac 1{10}$.   
By our choice of vertex $v$, this implies in turn that every vertex of $G$ has gain at least $\frac 1{10}$, and thus belongs to $A_2$, $A_3$, or $B_2$. 
\end{claimproof}

\begin{claim}\label{claim:forest of stars characterization:no B_2-B_2 edge}
There is no $B_2$--$B_2$ edge in $G$.
\end{claim}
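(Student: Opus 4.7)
The plan is to derive a contradiction from the minimality of the counterexample $(G,(A,B))$. Suppose $vw$ is a $B_2$--$B_2$ edge, and let $u,u'$ denote the other neighbors of $v$ and $w$ respectively. Applying \Cref{claim:forest of stars characterization:weight bigger than sum of gains} to $v$ gives $\tfrac{1}{3}=f(v;G)>\gamma(u)+\gamma(w)=\gamma(u)+\tfrac{1}{6}$, so $\gamma(u)<\tfrac{1}{6}$. By \Cref{claim:forest of stars characterization:A_2-A_3_B_2}, every vertex of $G$ lies in $A_2\cup A_3\cup B_2$, with respective gains $\tfrac{7}{30}$, $\tfrac{1}{10}$ and $\tfrac{1}{6}$; only $\tfrac{1}{10}$ is strictly less than $\tfrac{1}{6}$, so $u\in A_3$, and symmetrically $u'\in A_3$. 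In particular, every vertex of $G$ has gain at least $\tfrac{1}{10}$.

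The core of the proof is then a numerical application of \Cref{claim:forest of stars characterization:weight of ISF is bigger than sum of gains} to a small induced subgraph $F$, split according to whether $u\neq u'$ or $u=u'$. In the non-triangle case I would take $F=G[\{v\}]$, which is trivially an induced forest of stars respecting $(A,B)$. The two neighbors $a,b$ of $u$ distinct from $v$ both lie in $N^2(V(F))$: this relies on the observation that $uw\notin E(G)$, since otherwise $w$ having degree $2$ would force $N(w)=\{u,v\}$ and hence $u=u'$. Since $\sum_{x\in N[V(F)]} f(x;G)=f(v)+f(u)+f(w)=\tfrac{7}{6}$ and each vertex has gain at least $\tfrac{1}{10}$, the claim would require $1<\tfrac{7}{6}-2\cdot\tfrac{1}{10}=\tfrac{29}{30}$, a contradiction.

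In the triangle case $u=u'$, the choice $F=G[\{v\}]$ only yields $1<\tfrac{16}{15}$, which is not a contradiction, so one must enlarge $F$. I would take $F=G[\{u,v\}]$, a valid induced $K_2$ respecting $(A,B)$ (since $d_F(u)=1$). Writing $b$ for $u$'s third neighbor, one has $N[V(F)]=\{u,v,w,b\}$ and $N^2(V(F))\supseteq N_G(b)\setminus\{u\}$, using that $b$ is adjacent to neither $v$ nor $w$ (whose only $G$-neighbors are $u$ and $v$). A short subcase analysis on whether $b\in A_2$, $A_3$ or $B_2$ — each subcase pinning down both $f(b)$ and $d(b)$, and hence $|N^2(V(F))|$ — shows that $\sum_{N[V(F)]} f(x;G)-\sum_{N^2(V(F))}\gamma(y)\le\tfrac{5}{3}<2=|V(F)|$ uniformly, again contradicting \Cref{claim:forest of stars characterization:weight of ISF is bigger than sum of gains}. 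The triangle case is the main obstacle: the natural singleton $F$ is too small to exploit any slack, and for $F=G[\{u,v\}]$ one must verify carefully that $b$'s remaining neighbors sit in $N^2(V(F))$ rather than in $N[V(F)]$, which is guaranteed only thanks to the small-degree structure of $G$ provided by the earlier claims.
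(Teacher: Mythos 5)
Your proof is correct and takes essentially the same route as the paper: the same dichotomy on whether the third vertex forms a triangle with $v,w$, with $F=G[\{v\}]$ in the non-triangle case (identical computation, $1<\tfrac{7}{6}-\tfrac{2}{10}=\tfrac{29}{30}$) and $F=G[\{u,v\}]$ in the triangle case. The only difference is that in the triangle case the paper just bounds $f(b;G)\leq\tfrac{3}{5}$ to get $2<\tfrac{53}{30}$, so your subcase analysis on $b$ and the distance-two gains is correct but not needed.
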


\begin{claimproof}
Arguing by contradiction, suppose that $vw$ is an edge of $G$ with $v, w\in B_2$. 
Let $x$ be the neighbor of $v$ distinct from $w$. 
By \Cref{claim:forest of stars characterization:weight bigger than sum of gains}, 
\[\frac 13 = f(v;G) > \gamma(w) + \gamma(x) = \frac 16 + \gamma(x),\]
that is, $\gamma(x) < \frac 16$. 
In particular, $x$ cannot be in $A_2$ nor in $B_2$, and thus is in $A_3$ by \Cref{claim:forest of stars characterization:A_2-A_3_B_2}.

{\bf Case~1: $xw \in E(G)$.}
Let $z$ be the neighbor of $x$ distinct from $v$ and $w$. 
Note that $f(z;G) \leq \frac 35$ by \Cref{claim:forest of stars characterization:A_2-A_3_B_2}. 
Using \Cref{claim:forest of stars characterization:weight of ISF is bigger than sum of gains} with $F=G[\{v,x\}]$, we obtain
\[2 < f(v;G) + f(w;G) + f(x;G) + f(z;G) = \frac 76 + f(z;G) \leq \frac 76 + \frac 35 = \frac {53}{30} < 2,\]
a contradiction.

{\bf Case~2: $xw \notin E(G)$.}
Let $y, z$ denote the two neighbors of $x$ distinct from $v$. 
Using \Cref{claim:forest of stars characterization:weight of ISF is bigger than sum of gains} with $F=G[\{v\}]$, we obtain 
\[1 < f(v;G) + f(w;G) + f(x;G) - \gamma(y) - \gamma(z) \leq 2 \cdot \frac 13 + \frac 12 - 2 \cdot \frac 1{10} = \frac {29}{30} < 1,\]
a contradiction.

Thus, both cases lead to a contradiction, which concludes the proof. 
\end{claimproof}

\begin{claim}\label{claim:forest of stars characterization:B_2 is empty}
$B_2 = \varnothing$.
\end{claim}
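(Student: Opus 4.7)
I plan to prove $B_2=\varnothing$ by contradiction: assuming some $v\in B_2$ exists, I exhibit, for each local configuration, a small induced forest of stars $F_0$ that violates the trivial bound $|V(F_0)|<\sum_{w\in N[V(F_0)]}f(w;G)$ from \Cref{claim:forest of stars characterization:weight of ISF is bigger than sum of gains} (strengthened by the $-\sum\gamma$ correction when needed). The first step is to identify the two neighbors $x,y$ of $v$. By \Cref{claim:forest of stars characterization:no B_2-B_2 edge} and \Cref{claim:forest of stars characterization:A_2-A_3_B_2}, $x,y\in A_2\cup A_3$; the inequality $\gamma(x)+\gamma(y)<f(v;G)=1/3$ from \Cref{claim:forest of stars characterization:weight bigger than sum of gains}, using the tabulated gains $\gamma_{A_2}=7/30$ and $\gamma_{A_3}=1/10$, then forces $\{x,y\}\subset A_3$ (the mixed pair $\{A_2,A_3\}$ yields only equality).

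The heart of the argument is a case split on whether $xy\in E(G)$ and, if not, on the size of the overlap $\{x_1,x_2\}\cap\{y_1,y_2\}$, where $x_1,x_2$ and $y_1,y_2$ denote the other neighbors of $x$ and $y$. Applying \Cref{claim:forest of stars characterization:weight bigger than sum of gains} at $x$ shows that each such neighbor lies in $A_3\cup B_2$, hence has $f$-value at most $1/2$ and gain at least $1/10$. Three choices of $F_0$ will then handle the three configurations: $F_0=G[\{v,x\}]$ (a $K_2$) when $xy\in E(G)$, yielding the impossibility
\[
2<\sum_{w\in N[V(F_0)]}f(w;G)\leq\tfrac{1}{3}+\tfrac{1}{2}+\tfrac{1}{2}+\tfrac{1}{2}=\tfrac{11}{6};
\]
$F_0=G[\{v\}]$ when $xy\notin E(G)$ and the overlap is empty, using the four second-neighborhood gains to get $1<\tfrac{4}{3}-4\cdot\tfrac{1}{10}=\tfrac{14}{15}$; and $F_0=G[\{v,x,y\}]$ (a valid star centered at $v$) when $xy\notin E(G)$ and the overlap is non-empty, in which case $|N[V(F_0)]\setminus\{v,x,y\}|\leq 3$, giving $3<\tfrac{1}{3}+\tfrac{1}{2}+\tfrac{1}{2}+3\cdot\tfrac{1}{2}=\tfrac{17}{6}$. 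All three inequalities fail numerically, providing the desired contradiction in each configuration.

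Verifying that each chosen $F_0$ is a valid induced forest of stars respecting $(A,B)$ is straightforward: in every case the A-endpoints incident to the B-vertex $v$ inside $F_0$ have $F_0$-degree exactly $1$, which is the only partition-respect condition at stake. The main obstacle I anticipate is not a delicate estimate but the bookkeeping of selecting the right $F_0$ in each configuration so that the elementary bound from \Cref{claim:forest of stars characterization:weight of ISF is bigger than sum of gains} is strictly less than $|V(F_0)|$. The tightest of the three cases is the $3$-vertex star configuration, where $17/6$ sits just barely below $3$; this is why the bound there uses the full $|N[V(F_0)]|$ sum rather than the $-\sum\gamma$ refinement.
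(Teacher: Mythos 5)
Your proposal is correct and takes essentially the same route as the paper: force both neighbours of $v$ into $A_3$ via the gain inequality, rule out $A_2$ among the second neighbours, and then apply \Cref{claim:forest of stars characterization:weight of ISF is bigger than sum of gains} to the small forests $G[\{v,x\}]$, $G[\{v\}]$, and $G[\{v,x,y\}]$ (indeed your observation that the second neighbours need only lie in $A_3\cup B_2$, with $f$-value at most $\tfrac 12$, is slightly more careful than the paper's phrasing). The only cosmetic difference is that in the non-adjacent case the paper combines the bounds from $G[\{v\}]$ and $G[\{v,x,y\}]$ into the contradictory inequalities $\abs W<\tfrac{10}{3}$ and $\abs W>\tfrac{10}{3}$ for $W=(N(x)\cup N(y))\setminus\{v\}$, instead of splitting on whether the two second neighbourhoods overlap.
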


\begin{claimproof}
Arguing by contradiction, suppose that $v$ is a vertex in $B_2$. 
Denote by $u, w$ the two neighbors of $v$ in such a way that $f(u;G) \geq f(w;G)$. 
Then, $u, w \in A_2 \cup A_3$ by \Cref{claim:forest of stars characterization:A_2-A_3_B_2} and \Cref{claim:forest of stars characterization:no B_2-B_2 edge}. 
Furthermore, using \Cref{claim:forest of stars characterization:weight of ISF is bigger than sum of gains}, we obtain that $\frac 13 = f(v;G) > \gamma(u)+\gamma(w)$,
and hence $u,w \in A_3$. 

Let $W \coloneqq (N(u) \cup N(w)) \setminus \{v\}$. 
It follows from \Cref{claim:forest of stars characterization:A_2-A_3_B_2} that $W \subseteq A_2 \cup A_3 \cup B_2$, and by
\Cref{claim:forest of stars characterization:weight bigger than sum of gains}, we know that $N(u) \cap A_2 = \varnothing = N(w) \cap A_2$,
therefore $W \subseteq A_3 \cup B_2$, hence every $x \in W$ satisfies $f(x; G) \leq \frac 12$ and $\gamma(x) \geq \frac 1{10}$.

{\bf Case~1: $uw \in E(G)$.}
Let $x$ be the neighbor of $u$ distinct from $v$ and $w$. 
Using \Cref{claim:forest of stars characterization:weight of ISF is bigger than sum of gains} with $F=G[\{u,v\}]$, we obtain 
\[2 < f(u;G)+f(v;G)+f(w;G)+f(x;G) = 2 \cdot \frac 12 + \frac 13 + f(x; G) \leq \frac {11}6 < 2,\]
a contradiction.

{\bf Case~2: $uw \notin E(G)$.} 
Using \Cref{claim:forest of stars characterization:weight of ISF is bigger than sum of gains} with $F=G[\{v\}]$, we obtain 
\[
1 < f(v;G)+f(u;G)+f(w;G) - \sum_{x \in W}\gamma(x)
\leq \frac{4}{3} - \frac {\abs W}{10}, 
\]
that is, $\abs W < \frac {10}3$. 
Using the same claim with $F=G[\{v,u,w\}]$, we obtain 
\[
3 < f(v;G)+f(u;G)+f(w;G)+\sum_{x \in W}f(x; G)
\leq \frac{4}{3} + \frac {\abs W}{2},  
\]
that is, $\abs W > \frac {10}3$, which is impossible.

Thus, both cases lead to a contradiction, as desired. 
\end{claimproof}

It follows from \Cref{claim:forest of stars characterization:A_2-A_3_B_2} and \Cref{claim:forest of stars characterization:B_2 is empty} that $V(G) = A_2 \cup A_3$. 

\begin{claim}\label{claim:forest of stars characterization:at most one A_2 in N(A_3)}
If $v \in A_3$, then $\abs {N(v) \cap A_2} \leq 1$.
\end{claim}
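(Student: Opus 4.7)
The plan is to argue by contradiction using \Cref{claim:forest of stars characterization:weight bigger than sum of gains}. Suppose $v \in A_3$ has at least two neighbors $u, w \in A_2$. Since $d(v) = 3$, there is exactly one further neighbor $x$, and by \Cref{claim:forest of stars characterization:A_2-A_3_B_2} together with \Cref{claim:forest of stars characterization:B_2 is empty} we have $x \in A_2 \cup A_3$.

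Now I would simply compare $f(v;G) = f_A(3) = \tfrac{1}{2}$ against the sum of gains over $N(v)$. From the formulas given in the proof, $\gamma(u) = \gamma(w) = \tfrac{7}{30}$, and $\gamma(x) \geq \tfrac{1}{10}$ regardless of whether $x \in A_2$ or $x \in A_3$. Therefore
\[
\sum_{y \in N(v)} \gamma(y) \;\geq\; \frac{7}{30} + \frac{7}{30} + \frac{1}{10} \;=\; \frac{17}{30} \;>\; \frac{1}{2} \;=\; f(v;G),
\]
which directly contradicts \Cref{claim:forest of stars characterization:weight bigger than sum of gains}.

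There is no real obstacle here: the claim follows from a single numerical inequality, exploiting the fact that the gain $\tfrac{7}{30}$ of an $A_2$-vertex is strictly larger than one third of $f_A(3)$. The only thing to be careful about is to invoke the previous structural claims (that $V(G) = A_2 \cup A_3$ and $\delta(G) \geq 2$) to ensure the third neighbor $x$ exists and has a well-defined positive gain bounded below by $\tfrac{1}{10}$.
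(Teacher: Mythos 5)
Your proof is correct and is essentially the paper's own argument: both rely on \Cref{claim:forest of stars characterization:weight bigger than sum of gains} together with the gain values $\gamma = \tfrac{7}{30}$ on $A_2$ and $\gamma \geq \tfrac{1}{10}$ on $A_3$, the paper merely phrasing it as ``if one neighbor is in $A_2$, the other two must lie in $A_3$'' instead of your direct $\tfrac{17}{30} > \tfrac{1}{2}$ contradiction. No gap.
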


\begin{claimproof}
Let $v \in A_3$ and let $x, y, z$ be the three neighbors of $v$. 
We may assume that $f(x; G) \geq f(y; G) \geq f(z; G)$.
Recall that $A = A_2 \cup A_3$ and $B=\varnothing$, by \Cref{claim:forest of stars characterization:A_2-A_3_B_2} and \Cref{claim:forest of stars characterization:B_2 is empty}. 
If $x \notin A_2$, then $N(v) \subseteq A_3$, and we are done.  
If $x \in A_2$, let us show that $y, z \in A_3$.
By \Cref{claim:forest of stars characterization:weight bigger than sum of gains}, we know that
\[\frac 12 = f(v; G) > \gamma(x) + \gamma(y) + \gamma(z) = \frac 7{30} + \gamma(y)+\gamma(z) \geq \frac 7{30} + 2\gamma(z).\]
Thus, $\gamma(z) < \frac 2{15} < \frac 7{30}$, and $z \notin A_2$, hence $z \in A_3$. Finally, we also know that
\[\gamma(y) < f(v; G) - \gamma(x) - \gamma(z) = \frac 12 - \frac 7{30} - \frac 1{10} = \frac 16 < \frac 7{30},\]
Therefore, $y \notin A_2$, and it follows that $y \in A_3$. 
\end{claimproof}

\begin{claim}\label{claim:forest of stars:no P_3 of A_2's}
Every $3$-vertex path in $G$ contains at least one vertex in $A_3$. 
\end{claim}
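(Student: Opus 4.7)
The plan is to argue by contradiction: suppose $u,v,w$ are three vertices of $G$ with $uv,vw\in E(G)$ and $u,v,w\in A_2$, and derive a contradiction. By \Cref{claim:forest of stars characterization:A_2-A_3_B_2} and \Cref{claim:forest of stars characterization:B_2 is empty} we have $V(G)=A_2\cup A_3$ and $B=\varnothing$, so every induced forest of stars in $G$ vacuously respects $(A,B)$, and every vertex has strictly positive gain ($\tfrac{7}{30}$ if in $A_2$, $\tfrac{1}{10}$ if in $A_3$).

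First I would dispose of the easy case $uw\in E(G)$. Then $u,v,w$ induce a triangle; as each has degree exactly $2$ and $G$ is connected, $G=K_3$. But any edge of $K_3$ is an induced $K_2$, hence an induced forest of stars with $2$ vertices, whereas $f(G,A,B)=3\cdot\tfrac{3}{5}=\tfrac{9}{5}<2$, contradicting that $G$ is a counterexample.

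In the remaining case $uw\notin E(G)$, the set $F\coloneqq G[\{u,v,w\}]$ is an induced $P_3\cong K_{1,2}$, hence an induced forest of stars in $G$. Let $u'$ and $w'$ denote the neighbors of $u$ and $w$ other than $v$; both exist (since $u,w$ have degree $2$) and lie outside $\{u,v,w\}$. Then $N[V(F)]\subseteq\{u,v,w,u',w'\}$, and applying \Cref{claim:forest of stars characterization:weight of ISF is bigger than sum of gains} to $F$ gives
\[
3\;=\;|V(F)|\;<\;f(G,A,B)-f(G',A',B')\;\leq\;\sum_{x\in N[V(F)]}f(x;G)\;-\sum_{y\in N^2(V(F))}\gamma(y).
\]
Since each vertex in $N[V(F)]$ is in $A_2\cup A_3$ and thus contributes at most $\tfrac{3}{5}$, a quick case check on $f(u';G)+f(w';G)$ (or $f(u';G)$ when $u'=w'$) shows that $\sum_{x\in N[V(F)]}f(x;G)\leq 3$, with equality if and only if $u'\neq w'$ and both lie in $A_2$.

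If the inequality is strict, we immediately obtain the absurd chain $3<\sum f-\sum\gamma\leq\sum f<3$. In the boundary case ($u'\neq w'$ with $u',w'\in A_2$, so $\sum f=3$), I split once more: if $N^2(V(F))\neq\varnothing$ then $\sum\gamma>0$ since all gains in this lemma are strictly positive, yielding $3<3-\sum\gamma<3$; otherwise $N[V(F)]=V(G)$, which makes $G'$ empty with $f(G',A',B')=0$ and $f(G,A,B)=\sum f=3$, so the outer inequality reads $3<3-0=3$, again absurd. Thus no such path exists. The argument is entirely bookkeeping: there is no combinatorial obstacle beyond picking the right $F$ (namely the natural induced $P_3$) and enumerating the two boundary sub-cases cleanly.
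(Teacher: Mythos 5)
Your proof is correct and follows essentially the same route as the paper: argue by contradiction, dispose of the triangle case via $K_3$, and apply \Cref{claim:forest of stars characterization:weight of ISF is bigger than sum of gains} to the induced $P_3$, bounding the weights of the (at most five) vertices of $N[V(F)]$ by $f_A(2)=\tfrac35$. The only remarks are that your boundary-case split is superfluous, since the first inequality in that claim is strict, so $3 < \sum_{x\in N[V(F)]}f(x;G) \leq 3$ is already a contradiction (the paper writes this simply as $3 < 5f_A(2)=3$), and that the paper treats the coincident-neighbor (4-cycle) case separately via \Cref{claim:forest of stars characterization:at most one A_2 in N(A_3)}, whereas your uniform bound absorbs it, which is a mild simplification.
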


\begin{claimproof}
Arguing by contradiction, suppose that $xyz$ is a $3$-vertex path in $G$ avoiding $A_3$. 
Then $x, y, z \in A_2$.  

If $xz \in E(G)$ then $G$ is isomorphic to $K_3$. 
Then, $f(G, A, B)= \frac{9}{5} \leq 2$ and $F\coloneqq G[\{x,y\}]$ is an induced forest of stars respecting $(A, B)$, showing that $G$ and $(A, B)$ do not form a counterexample, a contradiction. Hence, $xz \notin E(G)$. 

Let $v$ be the neighbor of $x$ distinct from $y$, and let $w$ be the neighbor of $z$ distinct from $y$. 
If $v = w$ then by \Cref{claim:forest of stars characterization:at most one A_2 in N(A_3)}, we know that $v \in A_2$, and thus $G$ is isomorphic to a cycle of length $4$. 
Then, $f(G, A, B)= \frac{12}{5} \leq 3$ and $F\coloneqq G[\{x,y,z\}]$ is an induced forest of stars respecting $(A, B)$, showing that $G$ and $(A, B)$ do not form a counterexample, a contradiction. 
Hence, $v \neq w$. 

Since $v, w \in A_2 \cup A_3$, using \Cref{claim:forest of stars characterization:weight of ISF is bigger than sum of gains} with $F=G[\{x, y, z\}]$ we obtain
\[3 < f(x; G) + f(y; G) + f(z; G) + f(v; G) + f(w; G) \leq 5 f_A(2) = 3,\]
a contradiction.
\end{claimproof}

\begin{claim}
\label{claim:forests_of_stars_no_A2-A2_edge}
There is no $A_2$--$A_2$ edge in $G$. 
\end{claim}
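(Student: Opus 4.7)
The plan is to argue by contradiction. Suppose $vw$ is an $A_2$--$A_2$ edge in $G$ and let $v'$ (resp.\ $w'$) be the neighbor of $v$ (resp.\ $w$) distinct from $w$ (resp.\ $v$). Applying \Cref{claim:forest of stars:no P_3 of A_2's} to the $3$-vertex paths $w$--$v$--$v'$ and $v$--$w$--$w'$ forces $v', w' \in A_3$, and \Cref{claim:forest of stars characterization:at most one A_2 in N(A_3)} rules out $v' = w'$ (otherwise $v'$ would have both $v$ and $w$ in $A_2$ as neighbors).

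Next, consider $F \coloneqq G[\{v, w\}]$, which is an induced forest of stars respecting $(A, B)$, being a single $K_2$ with both endpoints in $A$. Applying \Cref{claim:forest of stars characterization:weight of ISF is bigger than sum of gains} to $F$, with $G' \coloneqq G - \{v, w, v', w'\}$ (well-defined since $v' \neq w'$), yields
\[
2 \;=\; |V(F)| \;<\; f(G, A, B) - f(G', A', B').
\]
The goal is to derive a contradiction by showing directly that $f(G, A, B) - f(G', A', B') \leq 2$.

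To do so, I will expand this difference as $\sum_{u \in N[V(F)]} f(u;G) - \sum_{u \in N^2(V(F))} [f(u;G') - f(u;G)]$. The first sum equals $2 f_A(2) + 2 f_A(3) = \tfrac{11}{5}$. Every vertex in $N^2(V(F))$ lies in $A_3$ (again by \Cref{claim:forest of stars characterization:at most one A_2 in N(A_3)}, since any $A_2$-neighbor of $v'$ or $w'$ other than $v,w$ would give $v'$ or $w'$ two $A_2$-neighbors), so if such a vertex loses $s \in \{1, 2\}$ neighbors it contributes $\tfrac{1}{10}$ (when $s=1$) or $\tfrac{1}{3}$ (when $s=2$) to the inner bracket. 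In either case the contribution per edge from $\{v', w'\}$ to $N^2(V(F))$ is at least $\tfrac{1}{10}$ (using $\tfrac{1}{3} \geq \tfrac{2}{10}$). The total number of such edges is $2$ if $v'w' \in E(G)$ and $4$ otherwise, giving
\[
f(G, A, B) - f(G', A', B') \;\leq\; \tfrac{11}{5} - \tfrac{k}{10}
\]
with $k \in \{2, 4\}$; in both cases the value is at most $2$, the desired contradiction.

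The main subtlety lies in the boundary case $v'w' \in E(G)$ where $v'$ and $w'$ have no common neighbor outside $\{v, w\}$: there the inequality $f(G, A, B) - f(G', A', B') \leq 2$ is tight, and the contradiction crucially uses the strict inequality in \Cref{claim:forest of stars characterization:weight of ISF is bigger than sum of gains} (which comes from the counterexample assumption), rather than the coarser $\gamma$-based bound it also provides.
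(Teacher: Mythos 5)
Your proof is correct and follows essentially the same route as the paper: both arguments pin down the two outside neighbors as vertices of $A_3$ via \Cref{claim:forest of stars:no P_3 of A_2's} and \Cref{claim:forest of stars characterization:at most one A_2 in N(A_3)}, and then derive a $2<2$ contradiction from \Cref{claim:forest of stars characterization:weight of ISF is bigger than sum of gains}. The only difference is in bookkeeping: where the paper switches to the three-vertex forest $G[\{v,w,y\}]$ in the subcase where the outside neighbors are adjacent, you keep $F=G[\{v,w\}]$ throughout and track the exact weight drop ($\tfrac1{10}$ or $\tfrac13$) at each distance-two vertex, which handles that subcase equally well.
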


\begin{claimproof}
Arguing by contradiction, suppose that $vw$ is an $A_2$--$A_2$ edge in $G$. 
Let $x$ be the neighbor of $v$ distinct from $w$, and let $y$ be the neighbor of $w$ distinct from $v$. 

{\bf Case~1: $x=y$.}
By \Cref{claim:forest of stars characterization:at most one A_2 in N(A_3)}, we know that $x \in A_2$, but then $G$ is isomorphic to $K_3$, which we know is not possible (as discussed in the proof of \Cref{claim:forest of stars:no P_3 of A_2's}). 

{\bf Case~2: $x\neq y$.}
Applying \Cref{claim:forest of stars:no P_3 of A_2's} on the two paths $xvw$ and $vwy$, we deduce that $x, y \in A_3$. 
Let $z$ be a neighbor of $y$ outside $\{w, x\}$. 

First, suppose that $xy \in E(G)$. 
Using \Cref{claim:forest of stars characterization:weight of ISF is bigger than sum of gains} with $F=G[\{v, w, y\}]$, we obtain 
\[3 < f(v;G)+f(w;G)+f(x;G)+f(y;G)+f(z;G) = \frac {11}5 + f(z;G) \leq \frac {11}5 + \frac 35 < 3,\] 
a contradiction. Thus $xy \notin E(G)$. 

Now, let $F \coloneqq G[\{v, w\}]$. 
Then $N[V(F)] = \{v, w, x, y\}$, and $\abs {N^2(V(F))} \geq 2$ since $x$ and $y$ have degree $3$ and are not adjacent. 
But then, using \Cref{claim:forest of stars characterization:weight of ISF is bigger than sum of gains} on $F$, we obtain 
\[2 < \frac {11}5 - \frac 1{10}\abs {N^2(V(F))} \leq 2,\]
a contradiction. 
This concludes the proof of the claim. 
\end{claimproof}

\begin{claim}\label{claim:forest of stars:no A2 in a triangle}     
If $\{x,y,z\}$ induces a triangle in $G$, then $x, y, z \in A_3$, and $\abs {N_G(\{x, y, z\}) \cap A_2} \leq 1$.
\end{claim}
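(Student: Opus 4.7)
The plan is to argue by contradiction and apply the ISF-bound (\Cref{claim:forest of stars characterization:weight of ISF is bigger than sum of gains}) to cleverly chosen induced forests of stars. Throughout, I will use the facts established in the preceding claims, namely that $V(G) = A_2 \cup A_3$ and that $G$ has no $A_2$--$A_2$ edge.

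For the first assertion, suppose some triangle vertex, say $x$, lies in $A_2$. Then \Cref{claim:forests_of_stars_no_A2-A2_edge} forces $y, z \in A_3$, and $d(x) = 2$ gives $N(x) = \{y, z\}$. Let $u$ and $w$ denote the third neighbors of $y$ and $z$ respectively; \Cref{claim:forest of stars characterization:at most one A_2 in N(A_3)} forces $u, w \in A_3$. I split on whether $u = w$. If $u = w$, take $F = G[\{y, z\}]$: then $N[V(F)] = \{x, y, z, u\}$ and $N^2(V(F))$ consists of the single third neighbor $t$ of $u$, so the ISF-bound yields $2 < 21/10 - \gamma(t) \leq 2$, a contradiction. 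If $u \neq w$, take $F = G[\{x, y\}]$: then $N[V(F)] = \{x, y, z, u\}$ again, but $N^2(V(F))$ now contains at least two distinct vertices among $w$ and the two neighbors of $u$ distinct from $y$, each of gain at least $1/10$; thus $2 < 21/10 - 2/10 < 2$, again a contradiction.

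For the second assertion I assume $x, y, z \in A_3$ and suppose for contradiction that two distinct $A_2$-vertices $a, b$ lie in $N_G(\{x, y, z\})$. An $A_2$-vertex cannot be a common neighbor of two triangle vertices, since it would then form a triangle with them containing an $A_2$-vertex, contradicting the first assertion. Combined with \Cref{claim:forest of stars characterization:at most one A_2 in N(A_3)}, this forces $a$ and $b$ to attach to distinct triangle vertices, say $a \sim x$ and $b \sim y$. Denote by $a', b'$ the second neighbors of $a, b$ and by $w$ the third neighbor of $z$: both $a', b' \in A_3$ by the absence of $A_2$--$A_2$ edges, and $a', b', w$ all lie outside $\{x, y, z, a, b\}$ by the first assertion applied to any triangle they would create with two of $x, y, z$.

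The crucial observation is that $F = G[\{a, b, x, z\}]$ is always an induced forest of stars, consisting of a $K_{1,2}$ centered at $x$ with leaves $a, z$ together with an isolated $b$, since none of the edges $ab, az, bx, bz$ can appear in $G$. I apply the ISF-bound to this $F$ of size $4$, and perform a case analysis on the overlaps among $\{a', b', w\}$. In each configuration, one of two things occurs: either $\sum_{v \in N[V(F)]} f(v; G) < 4 = |V(F)|$, in which case the ISF-bound $4 < \sum f - \sum \gamma$ is already violated; or one refines $F$ to a $5$-vertex induced forest of stars such as $G[\{a, b, b', x, z\}]$, which adds the edge $b b'$ as a new $K_2$ component, and the same calculation then forces a contradiction. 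The main technical obstacle is handling the sporadic configuration $a' = b' = w$, in which the graph collapses to a specific six-vertex graph where the four-vertex $F = G[\{a, b, x, z\}]$ already has $|V(F)| > f(G) = 16/5$, contradicting minimality directly.
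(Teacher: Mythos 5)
Your proof of the first assertion is correct, and it takes a genuinely different route from the paper: where the paper deletes the $A_2$-vertex $x$, demotes $y,z$ to $B'$ and invokes minimality of the counterexample, you instead apply \Cref{claim:forest of stars characterization:weight of ISF is bigger than sum of gains} to the two-vertex forests $G[\{y,z\}]$ (when $u=w$) and $G[\{x,y\}]$ (when $u\neq w$). The counts $2<\frac{21}{10}-\frac 1{10}$ and $2<\frac{21}{10}-\frac 2{10}$ are right, the case split is exhaustive, and your argument is arguably simpler than the paper's for this half.

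The second assertion, however, has a genuine gap: the case analysis on the overlaps among $\{a',b',w\}$ is asserted rather than carried out, and the asserted dichotomy is false. Your setup is fine (the reduction to $a\sim x$, $b\sim y$, the facts $a',b'\in A_3$ and $a',b',w\notin\{x,y,z,a,b\}$, and the observation that $G[\{a,b,x,z\}]$ is always an induced forest of stars), the coincidence cases $a'=b'$, $a'=w$, $b'=w$ do indeed fall to the branch $\sum_{v\in N[V(F)]}f(v;G)<4$, and your ``sporadic'' case $a'=b'=w$ with $f(G)=\frac{16}{5}$ is correct --- but that case is the easy one. The delicate case is when $a',b',w$ are pairwise distinct, and there your two tools can both fail. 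Concretely, suppose $w\in A_2$ with second neighbour $w'\in A_3$, $a'b'\notin E(G)$, $N(w')=\{w,a',b'\}$, and $a',b'$ are both adjacent to a common vertex $c\in A_3$ whose third neighbour $e$ lies further away (the graph continuing beyond $e$). This configuration is consistent with \Cref{claim:forest of stars characterization:at most one A_2 in N(A_3)}, \Cref{claim:forests_of_stars_no_A2-A2_edge} and your first assertion. For $F=G[\{a,b,x,z\}]$ one gets $\sum_{v\in N[V(F)]}f(v;G)=\frac{43}{10}$ and $N^2(V(F))=\{w',c\}$ with total gain $\frac 15$, so the bound only yields $4<\frac{41}{10}$, no contradiction; for your refinement $F'=G[\{a,b,b',x,z\}]$ one gets $\sum_{v\in N[V(F')]}f(v;G)=\frac{53}{10}$ while $N^2(V(F'))=\{e\}$, so the bound yields $5<\frac{53}{10}-\gamma(e)\geq\frac{53}{10}-\frac 7{30}$, again no contradiction. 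The configuration can still be killed --- for instance $G[\{a,b,w',x,z\}]$, with $w'$ as an isolated star, has closed neighbourhood $\{a,b,x,y,z,a',b',w,w'\}$ of weight $\frac{24}{5}<5$ --- but finding the right forest in each surviving configuration is precisely the work your sketch omits, and the specific recipe you give (``add the edge $bb'$ as a new $K_2$ component'') is not the one that works here. For comparison, the paper sidesteps this by applying the ISF bound to stars centred at the triangle vertex $y$ adjacent to an $A_2$-neighbour (its forests $G[\{v,y,z\}]$ and $G[\{x,y,v\}]$), whose closed neighbourhoods stay small enough that no such residual configurations arise.
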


\begin{claimproof}
Suppose that $\{x,y,z\}$ induces a triangle in $G$. 
We may assume that $f(x; G) \geq f(y; G) \geq f(z; G)$. 

First, we prove that $x, y, z \in A_3$. 
Arguing by contradiction, suppose this is not the case. 
Then, $x \in A_2$, since $V(G) = A_2 \cup A_3$ and $f(x; G) \geq f(y; G) \geq f(z; G)$.  
By \Cref{claim:forests_of_stars_no_A2-A2_edge}, we know that $y, z \notin A_2$, and thus $y, z \in A_3$.
Let $G' \coloneqq G-x$ and let $(A', B')$ be the restriction of $(A, B)$ to $V(G')$ with $y, z$ moved to $B'$.
Observe that
\begin{align*}
f(G', A', B') &= f(G, A, B) - f(x; G) - f(y; G) - f(z; G) + f(y; G') + f(z; G') \\
&= f(G, A, B) - \frac 35 - 2\left(f_A(3)-f_B(2)\right) \\
&= f(G, A, B) - \frac 35 - 2 \cdot \frac 16 \\
&= f(G, A, B) - \frac {14}{15}.
\end{align*}
By the minimality of our counterexample, there exists an induced forest of stars $F'$ in $G'$ respecting $(A', B')$ and containing at least $f(G', A', B')$ vertices.
Let $F \coloneqq G[V(F') \cup \{x\}]$. 
Observe that $F$ is an induced forest of stars in $G$. 
Indeed, since $y, z \in B'$ and $yz \in E(G')$, they cannot both be in $V(F')$.
Furthermore, if one of them is in $V(F')$, say $y \in V(F')$, then $N_F(y) = N_{F'}(y) \cup \{x\} \subseteq A' \cup \{x\}$, and it follows that all vertices in $N_F(y)$ have degree 1 in $F$, as desired.
Note also that $F$ (trivially) respects $(A, B)$, since all vertices of $F$ are in $A$. 
The number of vertices in $F$ is then
\[
\abs {V(F)} = \abs {V(F')}+1
\geq f(G', A', B') + 1
= f(G, A, B) + \frac 1{15}
\geq f(G, A, B),\]
contradicting the fact that $G$ and $(A, B)$ form a counterexample. 
Therefore, $x, y, z \in A_3$, as claimed. 

Now, let us show that $\abs {N_G(\{x, y, z\}) \cap A_2} \leq 1$. 
Let $u$ (resp.\ $v$, $w$) be the neighbor of $x$ (resp.\ $y$, $z$) not in the triangle $xyz$.
Arguing by contradiction, suppose that $\abs {N_G(\{x, y, z\}) \cap A_2} \geq 2$. 
Without loss of generality, we may assume that $u\neq v$ and $u,v \in A_2$, thus $f(u; G) = f(v; G) = \frac 35$. 
Let $t$ be the neighbor of $v$ distinct from $y$.
By \Cref{claim:forests_of_stars_no_A2-A2_edge}, we know that $t \notin A_2$, and thus $t \in A_3$.
In particular, $t \neq u$. 
By the first part of this proof, we also know that $t \neq x$ and $t \neq z$, otherwise there would be a triangle containing a degree-2 vertex.

{\bf Case 1: $t = w$.}
Observe that $N_G[\{v, y, z\}] = \{x, y, z, v, w\}$, and $w \neq u$ (since $w=t \in A_3$ and $u\in A_2$). 
Applying \Cref{claim:forest of stars characterization:weight of ISF is bigger than sum of gains} on $F \coloneqq G[\{v, y, z\}]$, we obtain 
\[3 < \underbrace {f(x; G) + f(y; G) + f(z; G) + f(w; G)}_{=4 \cdot f_A(3) 
= 4 \cdot \frac 12} + \underbrace {f(v; G)}_{= f_A(2)} = 2 + \frac 35 < 3,\]
which is a contradiction.

{\bf Case 2: $t \neq w$.}
Let $S \coloneqq \{x, y, v\}$, let $W \coloneqq N(S) = \{z, u, t\}$ and let $Z \coloneqq N^2(S)$.
Applying \Cref{claim:forest of stars characterization:weight of ISF is bigger than sum of gains} on $F \coloneqq G[S]$, we obtain
\[3 < f(x; G)+f(y; G)+f(v;G) + f(z; G) + f(u; G)+f(t; G) - \frac 1{10}\abs Z = 4 \cdot \frac 12 + 2 \cdot \frac 35 - \frac 1{10}\abs Z.\]
In particular, this implies that $\abs Z \leq 1$.
But since $w \in Z$, we deduce that $Z = \{w\}$.
This implies however that $N(t) \setminus \{v\} \subseteq \{x, y, z, w\}$.
But we know that $t \neq u$, and hence $x \notin N(t)$, and also $y \notin N(t)$ by the first part of this proof.
It follows that $N(t) = \{v, z, w\}$, and in particular $z \in N(t)$, therefore $t = w$, which is a contradiction.

Both cases lead to a contradiction, which concludes the proof of the claim. 
\end{claimproof}

\begin{claim}\label{claim:forest of stars:4 distinct vertices at distance 2 from an A2}
If $v \in A_2$, then $\abs {N^2(v)} = 4$.
\end{claim}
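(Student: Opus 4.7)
The plan is to argue by contradiction, leveraging the structural claims already proved. Suppose $v \in A_2$ with $N(v) = \{x, y\}$. By \Cref{claim:forest of stars characterization:A_2-A_3_B_2} and \Cref{claim:forest of stars characterization:B_2 is empty}, $V(G) = A_2 \cup A_3$; \Cref{claim:forests_of_stars_no_A2-A2_edge} then gives $x, y \in A_3$, \Cref{claim:forest of stars characterization:at most one A_2 in N(A_3)} places the two remaining neighbors of each of $x, y$ in $A_3$, and \Cref{claim:forest of stars:no A2 in a triangle} implies both $xy \notin E(G)$ and that these remaining neighbors avoid $\{v, x, y\}$. Writing $N(x) = \{v, a, b\}$ and $N(y) = \{v, c, d\}$ with $a, b, c, d \in A_3 \setminus \{v, x, y\}$, we have $N^2(v) = \{a, b\} \cup \{c, d\}$ of size at most four, and the goal is to rule out any coincidence among $a, b, c, d$.

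Suppose for contradiction that $|N^2(v)| < 4$, and apply \Cref{claim:forest of stars characterization:weight of ISF is bigger than sum of gains} to the induced forest of stars $F \coloneqq G[\{v, x, y\}]$, which is the star $K_{1,2}$ centered at $v$ (since $xy \notin E(G)$). If $|N^2(v)| = 2$, then $N(x) = N(y)$ and $N[V(F)] = \{v, x, y, a, b\}$ has exactly five vertices, so $\sum_{u \in N[V(F)]} f(u; G) = \tfrac 35 + 4 \cdot \tfrac 12 = \tfrac{13}{5} < 3 = |V(F)|$, directly contradicting the claim. If $|N^2(v)| = 3$, then after relabeling we have $a = c \eqqcolon z$ and $b \neq d$, giving $N[V(F)] = \{v, x, y, z, b, d\}$ of size six and $\sum f = \tfrac 35 + 5 \cdot \tfrac 12 = \tfrac{31}{10}$. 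This just barely fails to give a contradiction alone, so I would invoke the stronger bound $|V(F)| < \sum f - \sum_{u \in N^2(V(F))} \gamma(u)$; since every vertex of $G$ has gain at least $\tfrac 1{10}$, it suffices to show $N^2(V(F)) \neq \varnothing$, which then yields $\sum f - \sum \gamma \leq 3 = |V(F)|$, the desired contradiction.

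The main obstacle is therefore showing $V(G) \supsetneq N[V(F)]$ in the case $|N^2(v)| = 3$. Assuming $V(G) = \{v, x, y, z, b, d\}$ and using that $z, b, d \in A_3$ all have degree three, I would trace the forced adjacencies: $N(z) = \{x, y, w\}$ with $w \in \{b, d\}$, say $w = b$ by symmetry, which forces the third neighbor of $b$ (other than $x$ and $z$) to lie in $\{v, y, d\}$; ruling out $v$ (since $N(v) = \{x, y\}$) and $y$ (since $N(y) = \{v, z, d\}$), we get $b \sim d$; but then the third neighbor of $d$ (other than $y$ and $b$) must lie in $\{v, x, z\}$, each excluded by the same reasoning, a contradiction. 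Once $V(G) \neq N[V(F)]$ is established, connectivity of $G$ forces some vertex of $V(G) \setminus N[V(F)]$ to be adjacent to $\{z, b, d\}$ and hence to lie in $N^2(V(F))$, completing the argument.
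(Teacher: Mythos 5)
Your proof is correct and takes essentially the same route as the paper: both eliminate the cases $|N^2(v)|\in\{2,3\}$ by applying \Cref{claim:forest of stars characterization:weight of ISF is bigger than sum of gains} to the star $F=G[\{v,x,y\}]$ (with the same arithmetic, $\frac{13}{5}$ and $\frac{31}{10}$), and settle the tight case $|N^2(v)|=3$ by tracing forced adjacencies in the six-vertex configuration. The only difference is cosmetic and lies in the direction of the last step: the paper deduces $N^2(V(F))=\varnothing$ from the numeric bound and then derives a structural contradiction (a degree-$4$ vertex), whereas you first rule out $V(G)=N[V(F)]$ structurally and then use connectivity plus the gain term (every vertex having gain at least $\frac{1}{10}$) to turn the bound into the contradiction $3<3$.
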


\begin{claimproof}
Suppose that $v \in A_2$ and let $u, w$ be the two neighbors of $v$. 
By \Cref{claim:forest of stars:no A2 in a triangle}, we know that $uw \not \in E(G)$, and by \Cref{claim:forests_of_stars_no_A2-A2_edge}, we know that $u, w \in A_3$.
Let $W \coloneqq N^2(v) = N(\{u, w\}) \setminus \{v\}$.
We know that $W \subseteq A_3$ by \Cref{claim:forest of stars characterization:at most one A_2 in N(A_3)}.
Let $F \coloneqq G[\{v, u, w\}]$ (thus $W = N(V(F))$).
Applying \Cref{claim:forest of stars characterization:weight of ISF is bigger than sum of gains} on $F$, we obtain
\[3 < f(v; G) + f(u; G) + f(w; G) + \sum_{x \in W}f(x; G) = \frac 35 + 2 \cdot \frac 12 + \abs W\frac 12.\]
Thus $\abs W > 2\left(3 - \frac 35\right) - 2 = \frac {14}5$, and hence $\abs W \geq 3$. Furthermore, we know that $\abs W \leq 4$, and hence $\abs W \in \{3, 4\}$.

To prove the lemma, we must show that $\abs W = 4$. 
Arguing by contradiction, suppose not, thus $\abs W = 3$. 
Denote the three vertices in $W$ as $x, y, z$ in such a way that $N(u) = \{v, x, y\}$ and $N(w) = \{v, x, z\}$.
Let $Z \coloneqq N^2(V(F))$. Applying again \Cref{claim:forest of stars characterization:weight of ISF is bigger than sum of gains} on $F$, we obtain
\[3 < \frac 35 + \frac 52 - \sum_{t \in Z}\gamma(t) \leq \frac {31}{10} - \abs Z\frac 1{10}.\]
Thus $\abs Z < 1$, that is, $Z=\varnothing$, and hence $N[W] = W \cup \{u, w\}$. Since $w \notin N(y)$, we deduce that $N(y) = \{u, x, z\}$.
Similarly, since $u \notin N(z)$, we deduce that $N(z) = \{w,x,y\}$.
But then $N(x) = \{u, w, y, z\}$, which is a contradiction since $x$ has degree $3$ (and $u, w, y, z$ are all distinct). 
We conclude that $\abs W = 4$, as desired.
\end{claimproof}

\begin{claim}\label{claim:forest of stars:distance between A2's is >= 4}
If $v$ and $w$ are two distinct vertices in $A_2$, then $\dist_G(v, w) \geq 4$.
\end{claim}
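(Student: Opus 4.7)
The plan is to argue by contradiction. Suppose $v, w \in A_2$ are distinct with $\dist_G(v, w) \leq 3$. First, for $\dist(v,w) = 1$, apply \Cref{claim:forests_of_stars_no_A2-A2_edge}; for $\dist(v,w) = 2$, any common neighbor of $v$ and $w$ must lie in $A_3$ (since $V(G) = A_2 \cup A_3$ by \Cref{claim:forest of stars characterization:A_2-A_3_B_2} and \Cref{claim:forest of stars characterization:B_2 is empty}), contradicting \Cref{claim:forest of stars characterization:at most one A_2 in N(A_3)}.

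For the main case $\dist(v, w) = 3$, fix an induced path $v - p - q - w$. By \Cref{claim:forests_of_stars_no_A2-A2_edge} and \Cref{claim:forest of stars characterization:at most one A_2 in N(A_3)}, $p, q \in A_3$, the third neighbor $p_3$ of $p$ and $q_3$ of $q$ lie in $A_3$, and the second neighbors $v_2, w_2$ of $v, w$ also lie in $A_3$. Applying \Cref{claim:forest of stars:4 distinct vertices at distance 2 from an A2} to both $v$ and $w$ forces $|N^2(v)| = |N^2(w)| = 4$. Combining this with \Cref{claim:forest of stars:no A2 in a triangle}, I would verify that the eight vertices $v, p, q, w, v_2, p_3, q_3, w_2$ are pairwise distinct: for instance, $v_2 = q_3$ or $p_3 = w_2$ would collapse $|N^2(v)|$ or $|N^2(w)|$ to at most $3$; $v_2 = p_3$, $q_3 = w_2$, or $p_3 = q_3$ would produce a forbidden triangle with an $A_2$ vertex in its closed neighborhood; and $v_2 = w_2$, $v_2 = q$, or $p = w_2$ would shorten $\dist(v, w)$ to at most $2$.

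The core of the argument is to apply \Cref{claim:forest of stars characterization:weight of ISF is bigger than sum of gains} to the induced subgraph $F := G[\{v, p, q\}]$, which is an induced $P_3 = K_{1,2}$ (since $vq \notin E(G)$). With $|N[V(F)]| = 7$, we get $\sum_{x \in N[V(F)]} f(x; G) = 2 f_A(2) + 5 f_A(3) = \tfrac{37}{10}$, so the claim yields $\sum_{x \in N^2(V(F))} \gamma(x) < \tfrac{7}{10}$. Since every vertex of $G$ has gain at least $\tfrac{1}{10}$ (as $V(G) = A_2 \cup A_3$), this forces $|N^2(V(F))| \leq 6$. The contradiction comes from exhibiting seven distinct vertices in $N^2(V(F))$, drawn from the candidate set $\{v_2', v_2'', p_3', p_3'', q_3', q_3'', w_2\}$ where the primed symbols denote the two remaining neighbors of $v_2$, $p_3$, and $q_3$.

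The main obstacle is the delicate case analysis needed to verify that these seven candidate vertices are pairwise distinct and all lie outside $N[V(F)]$. A priori several coincidences are possible---e.g.\ $v_2' = q_3$, $p_3' = q_3$, $q_3' \in \{v_2, p_3\}$, and ``cross'' identifications like $v_2' = p_3'$ or $v_2' = w_2$---and each must be eliminated by either a short-cycle argument (contradicting \Cref{claim:forest of stars:no A2 in a triangle} or \Cref{claim:forest of stars characterization:at most one A_2 in N(A_3)}), a further reduction of $|N^2(v)|$ or $|N^2(w)|$ below $4$ (contradicting \Cref{claim:forest of stars:4 distinct vertices at distance 2 from an A2}), or an auxiliary application of \Cref{claim:forest of stars characterization:weight of ISF is bigger than sum of gains} with a different choice of $F$ (for instance $F = G[\{v, p, q, p_3\}]$, an induced $K_{1,3}$, or $F = G[\{v_2, p, q, w_2\}]$, a star forest). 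The bookkeeping of these sub-cases is the technical heart of the proof.
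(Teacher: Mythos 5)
Your handling of distances $1$ and $2$ matches the paper, and the frame (minimal counterexample plus \Cref{claim:forest of stars characterization:weight of ISF is bigger than sum of gains}) is the right one, but the core step for distance $3$ has a genuine gap: it is simply not true that one can always exhibit seven distinct vertices in $N^2(V(F))$ for $F=G[\{v,p,q\}]$. The decisive obstruction is the case $v_2w_2\in E(G)$ (in the paper's notation, $st\in E(G)$). Then $N(v_2)=\{v,w_2,s'\}$, so the vertices of $G$ at distance exactly $2$ from $\{v,p,q\}$ are contained in $\{w_2,s',p_3',p_3'',q_3',q_3''\}$, i.e.\ $\abs{N^2(V(F))}\leq 6$ \emph{structurally}, and $6\cdot\frac1{10}=\frac35<\frac7{10}$, so your inequality yields no contradiction. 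Moreover this adjacency cannot be ``eliminated'' by the earlier claims: it violates none of \Cref{claim:forests_of_stars_no_A2-A2_edge}, \Cref{claim:forest of stars characterization:at most one A_2 in N(A_3)}, \Cref{claim:forest of stars:no A2 in a triangle}, \Cref{claim:forest of stars:4 distinct vertices at distance 2 from an A2}. The same problem recurs in the non-adjacent case: e.g.\ $p_3q_3\in E(G)$, or a common neighbour of two of $v_2,p_3,q_3$, again drops the count below $7$ and is not excluded by anything proved so far.

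These leftover configurations are exactly where the paper has to work hardest, and its tools there go beyond re-applications of the counting claim with another star $F$, which is the only fallback you offer. In its Case~1 ($st\in E$) the paper first needs the four outside neighbours $x',y',s',t'$ to be pairwise distinct, and the subcase $x'=t'$ is killed by a \emph{minimality} argument (pass to $G'=G-\{x,t\}$, recompute $f(G',A',B')$ using that $d(x')$ drops by two, and contradict the counterexample property), only after which the counting claim applied to $F=G[\{v,w,x,t\}]$ closes the case via $4<4+\tfrac65-2\cdot\tfrac1{10}$. In Case~2 ($st\notin E$) the argument is again of a different nature: delete the six vertices $\{v,w,s,x,y,x'\}$, \emph{demote} $t$ to $B$, invoke minimality of the counterexample, and then a careful analysis of the resulting inequality (forcing $d_{G'}(t)=2$, $\abs{Z_1}=\abs{Z_2}=2$, and a shared neighbour $z'$ of $x'$ and $s$) produces the contradiction. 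Your proposal acknowledges the coincidences but asserts, without argument, that they can all be removed by short-cycle or counting arguments; at least for $v_2w_2\in E(G)$ this is provably insufficient with your choice of $F$, and for the distinctness facts needed afterwards the known proof genuinely uses graph modifications together with the minimality of the counterexample (including moving vertices into $B$), an ingredient absent from your outline. So as written the proof does not go through; the ``bookkeeping'' you defer is in fact the substance of the argument.
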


\begin{claimproof}
Arguing by contradiction, suppose that there are two distinct vertices $v, w$ in $A_2$ such that $\dist_G(v, w) \leq 3$. 
\Cref{claim:forests_of_stars_no_A2-A2_edge} ensures that $\dist_G(v, w) > 1$, thus $\dist_G(v, w) \in \{2, 3\}$.

If $\dist_G(v, w) = 2$, then there is some vertex $u \in N(v) \cap N(w)$.
By \Cref{claim:forests_of_stars_no_A2-A2_edge}, we know that $u \notin A_2$, thus $u \in A_3$ since $V(G) = A_2 \cup A_3$.
But then $u$ contradicts \Cref{claim:forest of stars characterization:at most one A_2 in N(A_3)}.
It follows that $\dist_G(v, w) = 3$.

Consider a length-$3$ path $vxyw$ from $v$ to $w$ in $G$. 
Let $s$ be the neighbor of $v$ distinct from $x$ and let $t$ be that of $w$ distinct from $y$. 
Observe that $s \neq t$, since otherwise $vsw$ would be a length-$2$ path from $v$ to $w$, contradicting $\dist_G(v,w) = 3$. 
For the same reason, $s \neq y$ and $t \neq x$. 
Thus $x, y, s, t$ are all distinct.
Note also that $x, y, s, t \in A_3$ by \Cref{claim:forests_of_stars_no_A2-A2_edge}. 
By \Cref{claim:forest of stars:no A2 in a triangle}, we know that $sx \notin E(G)$ and $yt \notin E(G)$.
By \Cref{claim:forest of stars:4 distinct vertices at distance 2 from an A2}, we also know that $xt \notin E(G)$ and $sy \notin E(G)$.

{\bf Case 1: $st \in E(G)$.}
Let $x'$ (resp.\ $y', s', t'$) be the neighbor of $x$ (resp.\ $y, s, t$) not in the cycle $vxywts$.
We know that $x', y', s', t' \in A_3$ since no vertex in $A_3$ can have more than one neighbor in $A_2$ by \Cref{claim:forest of stars characterization:at most one A_2 in N(A_3)}. 
Let us show that these four vertices are all distinct.
By \Cref{claim:forest of stars:no A2 in a triangle}, we know that $x' \neq y'$ and $s' \neq t'$. 
By \Cref{claim:forest of stars:4 distinct vertices at distance 2 from an A2}, we also know that $x' \neq s'$ and $y' \neq t'$.
If $x' = t'$, then let $G' \coloneqq G - \{x, t\}$ and let $(A', B')$ be the restriction of $(A, B)$ to $V(G')$.
Note that $f(x'; G) = \frac 12$ and $f(x'; G') = \frac 56$.
By the minimality of our counterexample, we know that $G'$ admits an induced forest of stars $F'$ respecting $(A', B')$ whose number of vertices is at least
\begin{align*}
f(G', A', B') &= f(G, A, B) - f(x; G) - f(t; G) + \gamma(v)+\gamma(w) + f(x'; G')-f(x'; G) + \gamma(y)+\gamma(s) \\
&= f(G, A, B) - 2 \cdot \frac 12 + 2 \cdot \frac 7{30} + \frac 56 - \frac 12 + 2 \cdot \frac 1{10} \\
&= f(G, A, B).
\end{align*}
But $F'$ is also a induced forest of stars in $G$ respecting $(A, B)$, contradicting the fact that $G$ and $(A, B)$ form a counterexample.
We deduce that $x' \neq t'$, and by a similar argument, one can show that $y' \neq s'$.
It follows that $x', y', s', t'$ are all distinct, as claimed. 
But then applying \Cref{claim:forest of stars characterization:weight of ISF is bigger than sum of gains} on $F \coloneqq G[\{v, w, x, t\}]$, we get
\[4 < 6f_A(3) + 2f_A(2) - \gamma(y') - \gamma(s') = 3 + \frac 65 - 2 \cdot \frac 1{10} = 4,\]
which is a contradiction. 

{\bf Case 2: $st \notin E(G)$.}
Let $x'$ be the neighbor of $x$ distinct from $v$ and $y$.
Let $G' \coloneqq G \setminus \{v, w, s, x, y, x'\}$ and let $(A', B')$ be the restriction of $(A, B)$ to $V(G')$ where $t$ is moved to $B'$.
By minimality of our counterexample, we know that there exists an induced forest of stars $F'$ in $G'$ respecting $(A', B')$ and whose number of vertices is at least $f(G', A', B')$.
Define $F \coloneqq G[V(F') \cup \{v, x, w\}]$.
Observe that $F$ is an induced forest of stars in $G$ respecting $(A, B)$ (thanks in part to the fact that $t \in B'$). 
Let $Z \coloneqq N(\{v, w, s, x, y, x'\}) \setminus \{t\}$.
Note that $\abs {V(F)} < f(G, A, B)$ since $G$ and $(A, B)$ form a counterexample.
Thus
\begin{align*}
f(G, A, B) &> \abs {V(F)} = \abs {V(F')}+3 \\
&\geq f(G', A', B') + 3 \\
&= f(G, A, B) + 3 - 2 \cdot \frac 35 - 5 \cdot \frac 12 + f(t; G') + \sum_{z \in Z}\left(f(z; G') - f(z; G)\right)
\end{align*}
and hence
\begin{align}\label{eq:claim:dist(v,w) >= 4:case 2}
\frac 7{10} > f(t; G') + \sum_{z \in Z}\left(f(z; G') - f(z; G)\right) \geq f(t; G') + \abs Z\frac 1{10}.
\end{align}
Observe that $d_{G'}(t) \leq d_G(t)-1=2$ since $tw \in E(G)$.

Let $Z_1 \coloneqq Z \cap N(x')$ and let $Z_2 \coloneqq Z \cap N(s)$.
We claim that $\abs {Z_2} = 2$.
To show this observe that:
\begin{itemize}
    \item $sw \notin E(G)$, since $s \neq t$;
    \item $sx \notin E(G)$ by \Cref{claim:forest of stars:no A2 in a triangle};
    \item $sy \notin E(G)$ by \Cref{claim:forest of stars:4 distinct vertices at distance 2 from an A2} on $v$;
    \item $sx' \notin E(G)$ by \Cref{claim:forest of stars:4 distinct vertices at distance 2 from an A2} on $v$ as well;
    \item $st \notin E(G)$ by the hypothesis of Case 2.
\end{itemize}
Altogether, this implies that $\abs {Z_2} = 2$.
Now observe that by \eqref{eq:claim:dist(v,w) >= 4:case 2}:
\[f(t; G') < \frac 1{10}\left(7 - \abs Z\right) \leq \frac 1{10}\left(7 - \abs {Z_2}\right) = \frac 12.\]
Thus, we deduce that $f(t; G') = \frac 13$ and $d_{G'}(t) = 2$. 
In particular, $N(t) \cap \{v, s, x, y, x'\} = \varnothing$.

Now we show that $\abs {Z_1} = 2$.
Observe that:
\begin{itemize}
    \item $x'v \notin E(G)$ by \Cref{claim:forest of stars:no A2 in a triangle};
    \item $x'w \notin E(G)$, since otherwise $x'=t$, which is not possible since $xt \notin E(G)$; 
    \item $x's \notin E(G)$ by \Cref{claim:forest of stars:4 distinct vertices at distance 2 from an A2} on $v$;
    \item $x'y \notin E(G)$ by \Cref{claim:forest of stars:no A2 in a triangle};      
    \item $x't \notin E(G)$ by the remark hereabove.
\end{itemize}
This implies that $\abs {Z_1} = 2$.

Furthermore, by \eqref{eq:claim:dist(v,w) >= 4:case 2}, we know that
\[\frac 1{10}\abs Z \leq \sum_{z \in Z}\gamma(z) \leq \sum_{z \in Z}\left(f(z; G') - f(z; G)\right) < \frac 7{10} - f(t; G') = \frac 7{10} - \frac 13 = \frac {11}{30}.\]
In particular, $\abs Z < \frac {110}{30}$, and thus $\abs Z \leq 3$. 
Since $Z_1$ and $Z_2$ are $2$-element subsets of $Z$, we deduce that there exists $z' \in Z_1 \cap Z_2$. 
Thus, $z'\in Z$ and $x', s \in N(z')$, and hence $f(z'; G')-f(z'; G) \geq \frac 13$. 
Using the fact that $\abs Z \geq 2$, it follows that 
\[
\sum_{z \in Z}\left(f(z; G')-f(z; G)\right) \geq \frac 13 + \frac 1{10} > \frac {11}{30},
\]
contradicting the inequality above. This concludes the proof of the claim.
\end{claimproof}

We are now ready for finishing the proof of the lemma. 
Clearly, $A_3 \neq \varnothing$ (possibly $A_2 = \varnothing$).  
For every vertex $v\in A_2$, let $P_v$ be the path $uvw$ where $u, w$ are the two neighbors of $v$ in $G$. 
Observe that $u, w\in A_3$, and $uw \notin E(G)$ by \Cref{claim:forest of stars:no A2 in a triangle}. 
Furthermore, $P_v$ and $P_{v'}$ are vertex disjoint for every two distinct vertices $v, v' \in A_2$ by \Cref{claim:forest of stars:distance between A2's is >= 4}. 
Let $G'$ be obtained from $G - A_2$ by adding an edge between the two endpoints of $P_v$ for every $v\in A_2$. 
(Equivalently, $G'$ can be seen as being obtained from $G$ by contracting one edge of each path $P_v$.) 
Observe that $G'$ is a cubic graph. 

Applying \Cref{lemma:alpha_1 geq |G|/2 if G is cubic} on $G'$ gives  an induced forest $F'$ in $G'$ with at least
$\frac 12\abs {V(G')} = \frac 12\abs {A_3}$ vertices, and whose components are all isomorphic to either $K_1$ or $K_2$.
Let $F \coloneqq G[V(F') \cup A_2]$. 
We claim that $F$ is an induced forest of stars in $G$. 
Clearly, $F$ is a forest, as follows from the definition of $G'$. 
Moreover, no connected component of $F$ can be incident to more than one vertex from $A_2$ (otherwise, two vertices of $A_2$ would be at distance less than $4$ in $G$, contradicting \Cref{claim:forest of stars:distance between A2's is >= 4}). 
By the definition of $G'$, it follows then that every connected component of $F$ containing a vertex $v$ of $A_2$ is a path on at most $3$ vertices (note that this could possibly be the path $P_v$ in case $F'$ contains the edge linking both endpoints of $P_v$ in $G'$).  
Furthermore, since $V(G) = A_2 \cup A_3$, $F$ trivially respects $(A, B)$.
Finally, observe that
\[\abs {V(F)} = \abs {V(F')}+\abs {A_2} \geq \frac 12\abs {A_3} + \abs {A_2} \geq \frac 12\abs {A_3} + \frac 35\abs {A_2} = f(G, A, B),\]
which contradicts the fact that $G$ and $(A, B)$ form a counterexample. 
This concludes the proof of the lemma. 
\end{proof}

\section*{Acknowledgments}

We thank Louis Esperet for bringing \Cref{conj:Akbari_et_al} to our attention and for his comments on a preliminary version of the manuscript. 
We are grateful to the two anonymous referees for their helpful comments on an earlier version of the manuscript.

\printbibliography

\end{document}